\newcommand{\lan}{\mathrm{LKan}}
\newcommand{\res}{\mathrm{Res}}
\newcommand{\thy}{\mathrm{Thy}}
\newcommand{\ad}{\mathrm{add}}
\newcommand{\THH}{\mathrm{THH}}
\newcommand{\vectw}{\mathrm{Vect}_{\mathbb{F}_p}^\omega}
\newcommand{\sk}{\mathrm{sk}}
\newcommand{\catp}{\mathrm{Cat}_{\infty}^{\mathrm{poly}}}
\newcommand{\catpk}{\mathrm{Cat}_{\infty, \kappa}^{\mathrm{poly}}}
\newcommand{\catap}{\mathrm{Cat}_{}^{\mathrm{add,poly}}}
\newcommand{\ho}{\mathrm{Ho}}
\newcommand{\stab}{\mathrm{Stab}}
\newcommand{\catex}{\mathrm{Cat}_{\infty}^{\mathrm{perf}}}
\newcommand{\fun}{\mathrm{Fun}}
\newcommand{\catst}{\mathrm{Cat}_\infty^{\mathrm{perf}}}
\newcommand{\perf}{\mathrm{Perf}}
\renewcommand{\sp}{\mathrm{Sp}}
\theoremstyle{definition}
\newtheorem{definition}{Definition}[section]
\newtheorem{remark}[definition]{Remark}
\newtheorem{example}[definition]{Example}
\newtheorem{cons}[definition]{Construction}
\newtheorem{construction}[definition]{Construction}
\theoremstyle{theorem}
\newtheorem{proposition}[definition]{Proposition}
\newtheorem{lemma}[definition]{Lemma}
\newtheorem{corollary}[definition]{Corollary}
\newtheorem{theorem}[definition]{Theorem}
\newcommand{\calA}{\mathcal{A}}
\newcommand{\calB}{\mathcal{B}}
\newcommand{\Cat}{\mathrm{Cat}}
\newcommand{\Add}{\mathrm{add}}
\newcommand{\Hom}{\mathrm{Hom}}
\newcommand{\poly}{\mathrm{poly}}
\newcommand{\Z}{\mathbb{Z}}
\newcommand{\xto}{\xrightarrow}
\newcommand{\Ab}{\mathrm{Ab}}
\newcommand{\proj}{\mathrm{Proj}}
\newcommand{\sym}{\mathrm{Sym}}
\begin{document}

\title{$K$-theory and polynomial functors}
\author{Clark Barwick, Saul Glasman, Akhil Mathew, and Thomas Nikolaus} 
\date{\today}

\maketitle 

\begin{abstract}
We show that the algebraic $K$-theory space of stable $\infty$-categories is
canonically functorial in polynomial functors.  
As a consequence, we obtain a new proof of B\"okstedt's calculation of
$\mathrm{THH}(\mathbb{F}_p)$. 
\end{abstract}

\section{Introduction}

The purpose of this note is to provide an additional structure 
on the higher algebraic $K$-theory of stable $\infty$-categories, arising from polynomial rather
than exact functors. 

In the case of the Grothendieck group $K_0$, 
the construction is due to Dold \cite{Dold72} and Joukhovitski \cite{Jou00}. 
Let $\mathcal{A}$ be an additive category. 
The  group $K_0(\mathcal{A})$ is defined 
to be the group completion of the additive monoid of isomorphism classes of
objects of $\mathcal{A}$. 
By construction, an additive functor $F: \mathcal{A} \to \mathcal{B}$ induces a 
map of abelian groups $K_0(\mathcal{A}) \to K_0(\mathcal{B})$. 

The results of \emph{loc.~cit.} provide additional functoriality on the
construction $K_0$, and  
show that if $F: \mathcal{A}\to \mathcal{B}$ is merely a \emph{polynomial}
functor in the sense of \cite{EM54},  then $F$ nevertheless induces a
canonical map of 
\emph{sets} $F_*: K_0(\mathcal{A}) \to K_0(\mathcal{B})$, such that $F_*$ carries the class
of an object $x \in \mathcal{A}$ to the class of $F(x) \in \mathcal{B}$. 
This polynomial functoriality yields, for example, the $\lambda$-operations on
$K_0(R)$ for a commutative ring $R$, which arise from the exterior power operations on
$R$-modules: the $i$th exterior power functor $\bigwedge^i$ induces a polynomial 
endofunctor on finitely generated projective $R$-modules, and
hence a map of sets $\lambda^i: K_0(R) \to K_0(R)$.  
Here we will extend this polynomial functoriality to higher algebraic $K$-theory. 
To do this, it is convenient to use the setup of the $K$-theory of stable
$\infty$-categories. \\

Let $\mathcal{C}$ be a stable $\infty$-category. As in \cite{BGT, Baruniv}, one constructs
an algebraic $K$-theory space $K(\mathcal{C})$ 
via the Waldhausen $S_\bullet$-construction applied to $\mathcal{C}; $ 
an exact functor $\mathcal{C} \to \mathcal{D}$ of stable $\infty$-categories
induces a map of spaces $K(\mathcal{C}) \to K(\mathcal{D})$. 
For example, when $\mathcal{C} = \perf(X)$ is the stable $\infty$-category
of perfect complexes over a quasi-compact and quasi-separated scheme $X$, 
this is the $K$-theory space of $X$ (introduced in \cite{TT90}; the machinery of
\cite{Qui73a} suffices if $X$
has an ample family of line bundles). 
Moreover, one characterizes \cite{BGT, Baruniv} the construction 
$\mathcal{C} \mapsto K(\mathcal{C})$, when considered as an invariant of all
stable $\infty$-categories and exact functors between them, via a universal
property. 

In this paper, 
we provide additional structure on the construction of algebraic $K$-theory in
analogy with the results on $K_0$ from \cite{Dold72, Jou00}, and characterize it by the same
universal property. 

To formulate the result, 
let $\catex$
denote the $\infty$-category of small, idempotent-complete stable $\infty$-categories and exact functors
between them, and let $\mathcal{S}$ be the $\infty$-category of spaces. 
Algebraic $K$-theory defines a functor $$K: \catex \to
\mathcal{S}.$$ It receives a natural transformation from the  functor $\iota: \catex \to \mathcal{S}$ which carries
$\mathcal{C} \in \catex$ to the space 
of objects in $\mathcal{C}$, i.e., we have a map $\iota \to K$ of functors
$\catex \to \mathcal{S}$. 
The universal property of $K$-theory 
\cite{BGT, Baruniv} 
states that $K$ is the initial functor  
$\catex \to \mathcal{S}$ receiving a map from $\iota$ such that $K$ preserves
finite products, splits semiorthogonal decompositions, and is grouplike.

Let $\mathcal{C}, \mathcal{D}$ be small, stable idempotent-complete
$\infty$-categories. A functor $f: \mathcal{C} \to \mathcal{D}$ is said
to be \emph{polynomial} if it is $n$-excisive for some $n$ in the sense of
\cite{Goo92}. 
Let 
$\catp$ 
denote the $\infty$-category 
of small, idempotent-complete stable $\infty$-categories and
{polynomial} functors between them. Thus, we have an inclusion $\catex \to \catp$; note that $\catex, \catp$ have
the same objects, but $\catp$ has many more morphisms. 
Our main result states that $K$-theory can be defined on $\catp$. 

\begin{theorem} 
\label{mainthm}
There is a 
canonical extension of the functor $K: \catex \to \mathcal{S}$ to a functor
$\widetilde{K}: \catp \to \mathcal{S}$. 
\end{theorem}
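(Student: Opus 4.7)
The plan is to lift the Blumberg--Gepner--Tabuada \cite{BGT} universal characterisation of $K$ from $\catex$ to $\catp$. As a preliminary, note that the space-of-objects functor $\iota$ extends canonically from $\catex$ to $\catp$, since a polynomial functor is in particular a functor of the underlying $\infty$-categories and so acts on cores. Moreover, finite products in $\catex$ remain products in $\catp$ (a polynomial functor into a product is a pair of polynomial functors, as $n$-excisive functors compose with limits in the target), and semiorthogonal decompositions in $\catex$ continue to decompose objects in $\catp$ since the structure maps of an SOD are exact. I would then define $\widetilde K$ as the initial object of $\mathrm{Fun}(\catp, \mathcal{S})_{\iota/}$ among functors that preserve finite products, split semiorthogonal decompositions, and are grouplike, obtained by an accessible Bousfield-type localisation exactly parallel to the BGT construction over $\catex$.

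To see that $\widetilde K$ restricts to $K$, I would observe that restriction along $\catex \hookrightarrow \catp$ carries local functors to local functors: the grouplike condition is pointwise, and the relevant products and SODs are common to both categories. Hence $\widetilde K|_{\catex}$ lies in $\mathrm{Fun}(\catex, \mathcal{S})_{\iota/}$ and satisfies the BGT conditions, producing a canonical comparison $K \to \widetilde K|_{\catex}$. The reverse direction requires exhibiting \emph{some} polynomial extension $LK: \catp \to \mathcal{S}$ of $K$ that is itself local; then the universal property of $\widetilde K$ yields a map $\widetilde K \to LK$ whose restriction is an inverse to the comparison.

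The main obstacle is thus the construction of $LK$, which is essentially the content of the theorem. The anticipated route generalises Dold and Joukhovitski's argument for $K_0$: a polynomial functor of degree $\leq n$ is controlled by its multilinear cross-effects, which are exact and hence induce maps on $K$-theory, and these should assemble via $\Sigma_n$-equivariant combinations into a map of underlying spaces even though no map of spectra is available. Organising this assembly coherently over the $\infty$-category $\catp$, and verifying compatibility with whichever model of $K$-theory encodes SOD-splitting, will be the bulk of the technical work.
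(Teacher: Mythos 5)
Your overall framework matches the paper's: extend $\iota$ to $\catp$, define $\widetilde{K}$ by imposing the same universal property (additivization / localization) over $\catp$, and then argue that the restriction of $\widetilde{K}$ to $\catex$ recovers $K$. You also correctly identify the difficult direction, namely exhibiting an honest local extension of $K$ (your $LK$) so that universality gives a retraction.

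The gap is in the last paragraph, which is where all the content lies. You propose to build $LK$ directly by ``assembling'' the multilinear cross-effects of a polynomial functor via $\Sigma_n$-equivariant combinations into a map of $K$-theory spaces, generalizing Dold--Joukhovitski. This is precisely the kind of direct construction the paper goes out of its way to avoid, and for a concrete reason: the paper remarks that maps on $K$-theory spaces induced by polynomial functors are typically not loop maps, so they cannot be produced as maps between $S_\bullet$-constructions, and there is no naive additive assembly at the spectrum level. The Dold--Joukhovitski argument for $K_0$ does not proceed by summing up cross-effects either; it uses the Passi-style result that polynomial maps of monoids extend uniquely across group completion. So your sketch at the crucial step is both vague and pointed at a route the authors consider obstructed.

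What the paper actually does is avoid constructing $LK$ altogether. It reformulates additivization as the Bousfield localization of $\funp(\catexk,\mathcal{S})$ at \emph{universal $K$-equivalences} (functors $F\colon\mathcal{C}\to\mathcal{D}$ with an inverse up to $K_0$ of $\mathrm{Fun}_{\mathrm{ex}}$). It introduces the corepresenting object $\Gamma_n\mathcal{C}$ for degree $\leq n$ functors out of $\mathcal{C}$, reducing morphism spaces in $\catpk$ to colimits of $\mathrm{Hom}_{\catexk}(\Gamma_n\mathcal{C},-)$. The key lemma (Proposition~\ref{gammanunivK}) is then that $\Gamma_n$ carries universal $K$-equivalences to universal $K$-equivalences, which is a pure $K_0$ statement proved using the polynomial functoriality of $K_0$ on stable $\infty$-categories (Proposition~\ref{K0stable}, via \v{C}ech nerve resolutions). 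Feeding this into a general compatibility result for Bousfield localizations on $\mathcal{P}_\Sigma$-categories (Corollary~\ref{generalcorloc}) shows that polynomial additivization restricts to ordinary additivization, which is exactly Theorem~\ref{mainpolythm} and gives Theorem~\ref{mainthm} for free. Your proposal never reduces the problem to a $K_0$-level computation, and without that reduction (or some substitute) the argument does not close.
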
 

In fact, the construction $\widetilde{K}$ is characterized by a similar universal
property. Namely, one has a canonical extension of the functor $\iota: \catex
\to \mathcal{S}$ to a functor $\iota:
\catp \to \mathcal{S}$ since $\iota$ can be defined on all $\infty$-categories
and functors between them. 
One defines the functor $\widetilde{K}$ (with a natural map $\iota \to
\widetilde{ K}$) by enforcing the same
universal property on $\catp$. The main computation one then carries out is that
$\widetilde{K}$ restricts to $K$ on $\catex$, i.e., one recovers the original
$K$-theory functor.

\begin{remark} 
\Cref{mainthm}, together with the theory of the Bousfield--Kuhn functor
\cite{Kuh89, Bou01},
 implies that for $n \geq 1$, the (telescopic) $T(n)$-localization of the
algebraic $K$-theory \emph{spectrum} of a stable $\infty$-category is functorial
in polynomial functors, i.e., extends to $\catp$. 
\end{remark} 

As an application of 
\Cref{mainthm}, we give a new proof of B\"okstedt's calculation of
$\THH(\mathbb{F}_p)$. 
Our insight is that B\"okstedt's calculation 
is equivalent by \cite{AN21} to the statement that $\mathrm{TR}(\mathbb{F}_p)$ is
discrete. Using arguments similar to \cite{Hi81, Kra80} (which show that the
$p$-adic $K$-theory of any perfect $\mathbb{F}_p$-algebra vanishes in positive degrees) and the connection
between $\mathrm{TR}$ and cyclic $K$-theory \cite{LM12}, one can prove the
desired discreteness of $\mathrm{TR}(\mathbb{F}_p)$.

\subsection*{Motivation and related work} 
Many previous authors have considered various types of non-additive operations
on algebraic $K$-theory spaces, which provided substantial motivation for this work.

An important example is given by operations in the $K$-theory space (and on the
$K$-groups) of a ring $R$ arising from exterior and symmetric
power functors on $R$-modules. 
Constructions of such maps  appear in many sources, including \cite{Hi81, Kra80,
So85, Gra89, Ne91, Lev97, HKT}. 
We refer to \cite{KoeckZanchetta} for some comparisons
between these constructions (including ours). 
Another example in this vein is given by the multiplicative norm maps along
finite \'etale maps constructed in \cite{BH17}. 

A different instance of non-additive operations in $K$-theory arises in Segal's 
approach to the Kahn--Priddy theorem \cite{Seg74}. These maps arise from the
$K$-theory of non-additive categories (such as the category of finite sets) and
cannot be obtained from \Cref{mainthm}.

\subsection*{Notation and conventions}
We freely use the language of $\infty$-categories and higher algebra as in
\cite{Lur09, HA}. 
Throughout, we let $\mathcal{S}$ denote the $\infty$-category of spaces, and
$\sp$ the $\infty$-category of spectra. 

\subsection*{Acknowledgments}

We are very grateful to Bhargav Bhatt, Lukas Brantner,  Dustin Clausen, Rosona
Eldred, Matthias Flach, Lars Hesselholt, Marc Hoyois, Jacob Lurie and  Peter Scholze for helpful
discussions. We also thank Benjamin Antieau for several comments and
corrections on an earlier
draft. 

The third author was supported by the NSF Graduate Fellowship under grant
DGE-114415 as this work was begun and was a Clay Research Fellow when this work
was completed. The third author also thanks the University of Minnesota
and the Hausdorff Institute of
Mathematics (during the fall 2016 Junior Trimester program ``Topology'') 
for their hospitality.  The fourth author was funded by the Deutsche
Forschungsgemeinschaft (DFG, German Research Foundation) under Germany's Excellence Strategy EXC 2044 390685587, Mathematics M\"unster: Dynamics-Geometry-Structure.

\section{Polynomial functors}

\subsection{Simplicial and filtered objects}
In this subsection, we review basic facts about simplicial objects in a stable
$\infty$-category. In particular, we review the stable version of the Dold-Kan correspondence, following Lurie
\cite{HA}, which connects simplicial and filtered objects.

To begin with, we review the classical Dold-Kan correspondence.
A general reference for this is \cite[III.2]{GoJ99} for the category of abelian
groups or \cite[8.4]{Wei94} for an abelian category. We refer to \cite[1.2.3]{HA} for a treatment for arbitrary additive
categories. 
\begin{theorem}[Dold-Kan correspondence] 
Let $\mathcal{A}$ be an additive category which is idempotent-complete. 
Then we have an equivalence 
of categories
\[ \fun(\Delta^{op}, \mathcal{A}) \simeq \mathrm{Ch}_{\geq 0}(\mathcal{A}),  \]
between the category
$\fun(\Delta^{op}, \mathcal{A})$ of simplicial objects in $\mathcal{A}$ and the
category 
$\mathrm{Ch}_{\geq 0}(\mathcal{A})$ of nonnegatively graded chain complexes in
$\mathcal{A}$. 
\end{theorem}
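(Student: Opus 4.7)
The plan is to construct an explicit pair of mutually inverse functors $N$ and $\Gamma$, with idempotent-completeness compensating for the absence of kernels in the additive setting.

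First I would define the \emph{normalized chain complex} functor $N \colon \fun(\Delta^{op},\mathcal{A}) \to \mathrm{Ch}_{\geq 0}(\mathcal{A})$. In the abelian case one takes $N_n X := \bigcap_{i<n} \ker(d_i \colon X_n \to X_{n-1})$ with differential $(-1)^n d_n$. For a merely additive $\mathcal{A}$, one instead exhibits $N_n X$ as the image of an explicit idempotent $e_n \colon X_n \to X_n$, where $e_n$ is an integer-coefficient polynomial in the face and degeneracy operators (a normalization projection annihilating the degenerate subobject). That this expression is a genuine idempotent is a purely combinatorial identity in the free additive category $\mathbb{Z}[\Delta^{op}]$, checked once and for all. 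Idempotent-completeness of $\mathcal{A}$ then guarantees that $\mathrm{Im}(e_n)$ exists in $\mathcal{A}$, and the alternating sum $\sum_i (-1)^i d_i$ restricts to a differential on $N_\bullet X$.

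In the other direction, I would define $\Gamma \colon \mathrm{Ch}_{\geq 0}(\mathcal{A}) \to \fun(\Delta^{op},\mathcal{A})$ by
$$\Gamma(C)_n \;=\; \bigoplus_{\eta \colon [n] \twoheadrightarrow [k]} C_k,$$
the sum being indexed by surjections in $\Delta$. The simplicial structure is determined on the summand indexed by $\eta$ by epi--mono factorization of $\eta \circ \alpha$ in $\Delta$, with the mono part acting via the differential $d$ together with appropriate signs. To verify $\Gamma \circ N \simeq \mathrm{id}$, the key input is the Eilenberg decomposition
$$X_n \;\simeq\; \bigoplus_{\eta \colon [n] \twoheadrightarrow [k]} N_k X,$$
proven by induction on $n$ by splitting off, at each stage, the summand associated to a given degeneracy $s_i$ using the idempotent $s_i \circ d_i$; the required splittings exist precisely because $\mathcal{A}$ is idempotent-complete. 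The identification $N \circ \Gamma(C) \simeq C$ is then essentially formal from the definitions.

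The main obstacle is the bookkeeping: verifying the combinatorial identities among faces, degeneracies, and the normalization idempotent in $\mathbb{Z}[\Delta^{op}]$, and confirming that the differential on $N_\bullet X$ really squares to zero. Once these identities are established in the universal additive setting, they transfer to any additive $\mathcal{A}$ by functoriality. Idempotent-completeness enters at a single but essential point --- without it, the normalized summands $N_k X$ may not exist as objects of $\mathcal{A}$, and the equivalence genuinely fails.
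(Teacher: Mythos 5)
Your proposal is correct and follows the classical Dold--Kan proof (normalization via an explicit idempotent, the Eilenberg decomposition, and the explicit inverse $\Gamma$), which is exactly what the paper's cited references establish; the paper itself offers no proof, only pointers to \cite[III.2]{GoJ99}, \cite[8.4]{Wei94}, and \cite[1.2.3]{HA}, together with the remark that $C_n$ is the direct summand $\bigcap_{i\geq 1}\ker(d_i)$ with differential $d_0$. The only cosmetic difference is the choice of convention (you kill the faces $d_i$ for $i<n$ and use $(-1)^n d_n$ as differential; the paper kills $d_i$ for $i\geq 1$ and uses $d_0$), which changes nothing of substance.
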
 
The Dold-Kan equivalence arises as follows. Given a simplicial object $X_\bullet \in 
\fun(\Delta^{op}, \mathcal{A})$, we form an associated chain complex
$C_\ast$ such that: 
\begin{enumerate}
\item $C_n$ is a direct summand of $X_n$, and is given by the intersection of
the kernels $\bigcap_{i \geq 1}\mathrm{ker}(d_i) $ where the $d_i$'s give the
face maps $X_n \to X_{n-1}$. If $\mathcal{A}$ is only
assumed additive, the existence of this kernel is not a priori obvious (cf.
\cite[Rmk. 1.2.3.15]{HA}). 
However, we emphasize that the object $C_n$ depends only on the face maps $d_i,
i \geq 1$.
\item The differential $C_n \to C_{n-1}$ comes from the face map $d_0$ in the
simplicial structure. 
\end{enumerate}

The Dold-Kan correspondence has  an analog for stable
$\infty$-categories, formulated in \cite[Sec.~1.2.4]{HA}, yielding a  correspondence between simplicial and \emph{filtered}
objects.

\begin{theorem}[{Lurie \cite[Th. 1.2.4.1]{HA}}] 
\label{nonabelianDK}
Let $\mathcal{C}$ be a stable $\infty$-category. Then we have an equivalence
of stable $\infty$-categories
\[ \fun(\Delta^{op}, \mathcal{C} ) \simeq \fun( N \mathbb{Z}_{\geq 0},
\mathcal{C}),   \]
which sends a simplicial object $X_\bullet \in\fun(\Delta^{op}, \mathcal{C} )$
to the filtered object $| \sk_0 X_\bullet|  \to |\sk_1 X_\bullet| \to |\sk_2
X_\bullet|  \to \dots $. 
\end{theorem}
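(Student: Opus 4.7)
The plan is to construct the equivalence explicitly via the skeletal filtration, then prove it is an equivalence by identifying compact generators on both sides and matching mapping spaces.

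For the forward functor, I would realize $F: \fun(\Delta^{op}, \mathcal{C}) \to \fun(N\mathbb{Z}_{\geq 0}, \mathcal{C})$ sending $X_\bullet \mapsto (|\sk_n X_\bullet|)_{n \geq 0}$ as a genuine functor of $\infty$-categories by writing $\sk_n$ as left Kan extension along the inclusion $\Delta^{op}_{\leq n} \hookrightarrow \Delta^{op}$. The transition maps $|\sk_{n-1} X_\bullet| \to |\sk_n X_\bullet|$ then come naturally from the inclusions of these truncated subcategories, yielding a diagram $N\mathbb{Z}_{\geq 0} \to \mathcal{C}$ functorial in $X_\bullet$.

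The main structural input is the ``non-abelian normalized chain complex'' identity: for any simplicial object $X_\bullet$ in a stable $\infty$-category, one has a canonical equivalence
\[ \mathrm{cofib}\bigl(|\sk_{n-1} X_\bullet| \to |\sk_n X_\bullet|\bigr) \simeq (X_n / L_n X)[n],   \]
where $L_n X = \mathrm{colim}_{[n] \twoheadrightarrow [k], \, k < n} X_k$ is the $n$-th latching object (with the colimit running over non-identity surjections). This computation, proved by induction on $n$ using the pushout description of $\sk_n$ in terms of $\sk_{n-1}$ and the $n$-th attaching square, shows that $F$ extracts the ``normalized'' information out of $X_\bullet$ and arranges it into an explicit filtration; in particular it shows $F$ is conservative.

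To prove $F$ is an equivalence, I would reduce to the presentable stable case, using that both functor categories commute with $\mathrm{Ind}$-completion and with passage to idempotent completions. Both $\fun(\Delta^{op}, \mathcal{C})$ and $\fun(N\mathbb{Z}_{\geq 0}, \mathcal{C})$ are then compactly generated, and $F$ preserves all colimits (since $\sk_n$ and geometric realization do). One then identifies compact generators that match under $F$: on the simplicial side, the objects $\Delta^n \otimes c$ for compact $c$; on the filtered side, the filtered objects concentrated in degrees $\geq n$ with constant value $c$. Using the associated-graded computation above, one checks inductively on $n$ that $F$ sends one set of generators to the other.

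The main obstacle will be the fully-faithful step, namely computing mapping spaces in $\fun(N\mathbb{Z}_{\geq 0}, \mathcal{C})$ between objects of the form $F(\Delta^m \otimes c)$ and matching them with mapping spaces in $\fun(\Delta^{op}, \mathcal{C})$. This amounts to showing that the filtration data (the transition maps between the $|\sk_n X_\bullet|$, together with the identification of the cofibers as shifts of $X_n/L_n X$) suffices to recover all morphism information of the simplicial object. Once this is established on a set of compact generators, stability, colimit-preservation, and conservativity extend the equivalence to the full functor categories.
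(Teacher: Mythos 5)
The paper does not prove this result; it is cited from Lurie \cite[Th.~1.2.4.1]{HA} and used as a black box, so there is no in-paper proof to compare against. Your outline correctly isolates the key structural input --- the associated-graded identity $\mathrm{cofib}(|\sk_{n-1}X_\bullet| \to |\sk_n X_\bullet|) \simeq (X_n/L_n X)[n]$ --- and the construction of the forward functor $F$ is fine, but the proposal contains two incorrect reduction steps and defers the main content.

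The reduction to the presentable case does not follow from ``commutation with $\mathrm{Ind}$'': for small stable $\mathcal{C}$ one does \emph{not} have $\fun(\Delta^{op}, \mathrm{Ind}(\mathcal{C})) \simeq \mathrm{Ind}(\fun(\Delta^{op}, \mathcal{C}))$, because $\Delta^{op}$ and $N\mathbb{Z}_{\geq 0}$ are not finite diagram shapes; for example, the constant simplicial object on a compact $c$ lies in $\fun(\Delta^{op},\mathcal{C})$ but is not compact in $\fun(\Delta^{op}, \mathrm{Ind}(\mathcal{C}))$, since its corepresented functor computes maps into a non-finite $\Delta^{op}$-indexed limit. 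A reduction to $\mathrm{Ind}(\mathcal{C})$ can still be arranged, but only by showing that the equivalence for $\mathrm{Ind}(\mathcal{C})$ restricts to the two small full subcategories, which requires understanding the \emph{inverse} functor, not just $F$. Next, $F$ does not carry the generators $\Delta^n\otimes c$ to the generators on the filtered side: at level $k<n$ the value of $F(\Delta^n\otimes c)$ is the stable homotopy type of $\sk_k\Delta^n$ tensored with $c$ (for instance $|\sk_0(\Delta^n\otimes c)| \simeq c^{\oplus(n+1)}$), a nontrivial finite colimit of the ``concentrated in degrees $\geq m$'' generators rather than any single one of them, so the ``inductive matching'' cannot proceed as stated. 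Finally, the step you yourself flag as the main obstacle --- fully faithfulness on generators, equivalently reconstructing a simplicial object from its skeletal filtration together with the cofiber identifications --- \emph{is} the theorem; your sketch supplies no mechanism for it (no inverse construction, no mapping-space computation). What you have actually established is that $F$ is a well-defined, colimit-preserving, conservative functor with the stated associated graded, which is correct but falls well short of a proof of the equivalence.
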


\begin{remark}[Making Dold-Kan explicit]
We will need to unwind the 
correspondence as follows. 
Given a filtered object $Y_0 \to Y_1 \to Y_2 \to \dots $ in $\mathcal{C}$, 
we form the sequence of cofibers $Y_0, Y_1/Y_0, Y_2/Y_1, \dots$, i.e., the
associated graded. We have boundary maps
$$Y_1/Y_0 \to \Sigma Y_0, \quad Y_2/Y_1 \to \Sigma Y_1/Y_0, \dots ,$$ and the sequence 
\begin{equation} \label{chcxhoC}  \dots \to \Sigma^{-2}Y_2/Y_1 \to \Sigma^{-1}Y_1/Y_0 \to Y_0  \end{equation}
forms a chain complex in the homotopy category $\ho(\mathcal{C})$: the
composite of any two successive maps is nullhomotopic. 
If $\mathcal{C}$ is an idempotent-complete stable $\infty$-category, then
$\ho(\mathcal{C})$ is an idempotent-complete additive category. Given 
a simplicial object $X_\bullet \in \fun(\Delta^{op}, \mathcal{C})$, we can also
extract a simplicial object in $\ho(\mathcal{C})$ and thus a chain complex in
$\ho(\mathcal{C})$ by the classical Dold-Kan correspondence. A basic compatibility states that this produces the sequence 
\eqref{chcxhoC} (for the corresponding filtered object $Y_0 \to Y_1 \to \dots$), i.e., the additive and stable Dold-Kan correspondences are
compatible \cite[Rem. 1.2.4.3]{HA}. 

\end{remark}

\subsection{Polynomial functors of stable $\infty$-categories}
In this subsection, we review
the notion of polynomial functor between stable $\infty$-categories.
We first discuss the analogous
notion for additive $\infty$-categories.

\begin{definition}[Eilenberg-MacLane \cite{EM54}]
Let $F: \calA \to \calB$ be a functor between additive $\infty$-categories and assume
first that $\calB$ is idempotent complete. Then: 
\begin{itemize}
\item  
$F$ is called \emph{polynomial} of degree $\leq -1$ if it is the trivial functor which sends everything to $0$. 
\item
$F$ is called \emph{polynomial} of degree $ \leq 0$ if it is a constant functor. 
\item Inductively, $F$ is called polynomial of degree $\leq n$ if for each $Y \in \calA$ the functor
$$
(D_YF)(X) := \mathrm{ker}\Big(F(Y \oplus X) \xto{F(\mathrm{pr}_X) }F(X)\Big)
$$
is polynomial of degree at most $n-1$.
 Note that this kernel exists as we have assumed $\calB$ to be idempotent complete and it is the complementary summand to $F(X)$.

\end{itemize}
We let $\fun_{\leq n}(\mathcal{A}, \mathcal{B}) \subset \fun(\mathcal{A},
\mathcal{B})$ be the subcategory spanned by functors of degree $\leq
n$.\footnote{This notion easily generalizes when $\mathcal{B}$ is not idempotent complete. 
If $\calB$ is not idempotent complete then $F: \calA \to \calB$ is called
polynomial of degree $\leq n$ if the composition $\calA \to \calB \to
\overline{\calB}$ is polynomial of degree $\leq n$, where $\calB \to
\overline{\calB}$ is an idempotent completion.}
Finally, a functor $F: \calA \to \calB$ is \emph{polynomial} if it is polynomial
of degree $\leq n$ for some $n$. 
\end{definition}

The notion of a polynomial functor behaves in a very intuitive fashion. 
For example, the composite of a functor of degree $\leq m$ with one of degree
$\leq n$ is of degree $\leq mn$. As another example, we have: 
\begin{example} 
Let $R$ be a commutative ring.
The symmetric power functors $\sym^i$ and exterior power functors $\bigwedge^i$ on
the category $\proj_R^\omega$ of finitely generated projective $R$-modules are polynomial of degree $\leq i$. 
\end{example}

Let $\mathcal{C} $ be a stable $\infty$-category. 

\begin{definition}[$n$-skeletal simplicial objects]
We say that a simplicial object $X_\bullet \in \fun(\Delta^{op}, \mathcal{C})$
is \emph{$n$-skeletal} if
it is left Kan extended from its restriction to $\Delta_{\leq n} \subset
\Delta$. \end{definition}

\begin{remark}[$n$-skeletal geometric realizations exist] 
Note that if $X_\bullet$ is $n$-skeletal for some $n$, then the
geometric realization $|X_\bullet|$ exists in $\mathcal{C}$. 
\end{remark}

\begin{remark}[$n$-skeletal objects via Dold-Kan]
\label{nskeletalDK}
The condition that $X_\bullet$ should be $n$-skeletal depends only on the
underlying homotopy category of $\mathcal{C}$, considered as an additive
category. 
Namely, using the Dold-Kan correspondence, we can form a chain complex $C_{\ast}$ in the homotopy category of
$\mathcal{C}$ from $X_\bullet$, and then we claim that $X_\bullet$ is
$n$-skeletal if and only if $C_{\ast}$ vanishes for $\ast > n$. 

Indeed, if $X_\bullet$ is $n$-skeletal, then clearly the maps $|\sk_i X_\bullet|
\to |\sk_{i+1} X_\bullet|$ are equivalences for $i \geq n$, which implies that
$C_{\ast} = 0$ for $\ast > n$. 
Conversely, if $C_{\ast} = 0$ for $\ast > n$, then the map of simplicial
objects $\sk_n X_\bullet \to
X_\bullet$
has the property that it induces an equivalence on $i$-truncated geometric
realizations for all $i \in \mathbb{Z}_{\geq 0}$, which implies by Theorem~\ref{nonabelianDK} that it is an equivalence of simplicial objects. 
\end{remark} 

\begin{definition} 

We say that a functor $f: \mathcal{C} \to \mathcal{D}$ of stable
$\infty$-categories preserves \emph{finite
geometric
realizations} if for every simplicial object $X_\bullet $ in $\mathcal{C}$ which
is $n$-skeletal for some $n$, the colimit $|F(X_\bullet)|$ exists and the
natural map $|F(X_\bullet)| \to F(|X_\bullet|)$ is an equivalence in
$\mathcal{C}$. 
\end{definition}

\begin{proposition} 
Let $\mathcal{C}, \mathcal{D}$ be stable $\infty$-categories.
Let $F: \mathcal{C} \to \mathcal{D}$ be a functor such that the underlying
functor on additive categories $\ho(\mathcal{C}) \to \ho(\mathcal{D})$ is
polynomial of degree $\leq d$. Then if $X_\bullet $ is an $n$-skeletal
simplicial object in $\mathcal{C}$, $F(X_\bullet)$ is an $nd$-skeletal
simplicial object in $\mathcal{D}$. 
\end{proposition}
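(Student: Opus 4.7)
The plan is to reduce the claim, via the Dold--Kan correspondence, to a purely additive statement about polynomial functors and simplicial objects in additive categories, and then to prove that statement by combining the Eilenberg--Zilber splitting with the cross-effect decomposition.

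By Remark \ref{nskeletalDK}, being $n$-skeletal depends only on the additive structure of the homotopy category: $X_\bullet$ is $n$-skeletal in $\mathcal{C}$ if and only if its classical Dold--Kan normalized chain complex in $\ho(\mathcal{C})$ vanishes in degrees $> n$. Since the induced functor $\ho(F) : \ho(\mathcal{C}) \to \ho(\mathcal{D})$ is polynomial of degree $\leq d$ by hypothesis, the problem reduces to the following additive assertion: if $G : \mathcal{A} \to \mathcal{B}$ is a polynomial functor of degree $\leq d$ between idempotent-complete additive categories, and $Y_\bullet$ is an $n$-skeletal simplicial object in $\mathcal{A}$, then $G(Y_\bullet)$ is $nd$-skeletal in $\mathcal{B}$.

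To prove the additive statement, the first ingredient is the Eilenberg--Zilber splitting, $Y_k \cong \bigoplus_{\sigma : [k] \twoheadrightarrow [j]} N_j(Y_\bullet)$, which by $n$-skeletality is a finite sum supported on $j \leq n$. The second is the cross-effect decomposition of a polynomial functor,
$$G\!\left(\bigoplus_\alpha Z_\alpha\right) \;\cong\; \bigoplus_{S,\ |S|\leq d} \mathrm{cr}_{|S|}(G)\bigl((Z_\alpha)_{\alpha\in S}\bigr),$$
where $\mathrm{cr}_k(G) = 0$ for $k > d$ as a standard consequence of iterating the deviation functors $D_Y$ in the definition. Combining these, $G(Y_k)$ is a finite direct sum indexed by pairs $(S,\{\sigma_\alpha\}_{\alpha \in S})$ with $|S| \leq d$ and each $\sigma_\alpha : [k] \twoheadrightarrow [j_\alpha]$ having $j_\alpha \leq n$. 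The face $d_i^G = G(d_i)$ acts on each Eilenberg--Zilber factor by precomposing $\sigma_\alpha$ with the coface $\delta^i:[k-1]\to [k]$ and factoring into an epi-mono, and acts on the cross-effect summands by functoriality. One then checks that $N_k(G(Y_\bullet)) = \bigcap_{i>0}\ker(d_i^G)$ is the sub-sum over those tuples $(\sigma_\alpha)$ that are \emph{jointly non-degenerate} with respect to every coface $\delta^i$, $i>0$, and a combinatorial argument shows that joint non-degeneracy of such a tuple of surjections $[k] \twoheadrightarrow [j_\alpha]$ forces $k \leq \sum_\alpha j_\alpha$, giving $k \leq nd$. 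Hence $N_k(G(Y_\bullet)) = 0$ for $k > nd$.

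The main obstacle is the final combinatorial step: identifying precisely which cross-effect summands survive in $\bigcap_{i>0}\ker(d_i^G)$ and proving the inequality $k \leq \sum_\alpha j_\alpha$ for jointly non-degenerate tuples of surjections. An alternative is to induct on $d$ using the deviation $D_Y G$, which is polynomial of degree $\leq d-1$ and provides a natural splitting $G(X \oplus Y) \cong G(X) \oplus D_Y G(X)$; but applying this degreewise to simplicial objects requires some care with naturality in $Y$, and the direct cross-effect approach yields the sharp bound $nd$ more transparently.
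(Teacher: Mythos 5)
Your reduction of the claim to a purely additive statement about polynomial functors of idempotent-complete additive categories, via Remark~\ref{nskeletalDK} and the compatibility of the stable and classical Dold--Kan correspondences, is exactly the reduction the paper performs. After this reduction, however, the paper stops: it simply cites \cite[Lem.~3.3]{GS} and \cite[4.23]{DP} for the additive assertion. You instead attempt to prove that assertion directly, and the approach you outline --- split $Y_k$ via the Eilenberg--Zilber lemma into a sum of normalized pieces $N_j Y$ supported in $j \leq n$, apply the cross-effect decomposition of the degree-$\leq d$ functor $G$ to that direct sum, and then argue that the normalized piece of $G(Y_\bullet)$ in degree $k$ is supported on cross-effect summands indexed by tuples of surjections whose targets satisfy $k \leq \sum_\alpha j_\alpha \leq dn$ --- is indeed the method of Dold--Puppe, i.e.\ essentially the content of the references the paper cites.

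The issue is that you have not actually carried out the step that does the work. As you yourself flag, the combinatorial core of the argument is missing: you have not identified which summands of the double decomposition survive in $N_k G(Y_\bullet) = \bigcap_{i>0}\ker(d_i)$, nor verified that those summands satisfy $k \leq \sum_\alpha j_\alpha$. This is not a minor detail. The face map $d_i$ acts on a cross-effect summand $\mathrm{cr}_{|S|}(G)((N_{j_\alpha}Y)_{\alpha\in S})$ by factoring each $\sigma_\alpha\delta^i$ as an epi $\sigma'_\alpha$ followed by a mono $\iota_\alpha$, and one must track both the relabeling of the surjections and the (possibly nonzero) induced maps $N_{j_\alpha}Y \to N_{j'_\alpha}Y$ coming from $\iota_\alpha$; the vanishing one needs is not immediate from the decomposition alone, and the precise characterization of ``joint non-degeneracy'' of a tuple of surjections, together with the inequality $k\leq\sum j_\alpha$ (which follows from embedding $[k]$ order-preservingly into $\prod_\alpha[j_\alpha]$), must be established rather than asserted. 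So the proposal sets up the right machinery and matches the paper's strategy up to the point of citation, but it has not supplied a proof of the statement that the paper delegates to \cite{GS,DP}. To complete it you would essentially have to reproduce the argument of \cite[4.23]{DP}.
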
 
\begin{proof} 
As above in Remark~\ref{nskeletalDK}, this is a statement purely at the level of homotopy categories. 
That is, the simplicial object $X_\bullet$ defines a simplicial object
 of the
additive category $\mathrm{Ho}(\mathcal{C})$, and thus a chain complex 
$C_\ast$
in
$\mathrm{Ho}(\mathcal{C})$. Similarly $F(X_\bullet)$ defines a simplicial object
of $\mathrm{Ho}(\mathcal{D})$ 
and thus a chain complex $D_\ast$ of $\mathrm{Ho}(\mathcal{D})$. The claim is that if 
$C_\ast = 0 $ for $\ast > n$ then $D_\ast = 0$ for $\ast > nd$. 
This is purely a statement about additive categories. 
For proofs, see \cite[Lem. 3.3]{GS} or \cite[4.23]{DP}. 
\end{proof}

\begin{definition}[Polynomial functors]
\label{polynomialdef}
Let $\mathcal{C}, \mathcal{D}$ be stable $\infty$-categories. 
We say that a functor $F: \mathcal{C} \to \mathcal{D}$ is \emph{polynomial of
degree $\leq d$} if the underlying functor $\ho(F) : \ho(\mathcal{C}) \to
\ho(\mathcal{D})$ of additive categories is polynomial 
of degree $\leq d$ and if $F$ preserves finite geometric realizations. 
We let $\fun_{\leq d}(\mathcal{C}, \mathcal{D}) \subset \fun(\mathcal{C},
\mathcal{D})$ denote the full subcategory spanned by functors of degree $\leq
d$. 
\end{definition} 

This notion of a polynomial functor will be fundamental to this paper. 
The definition is equivalent to the more classical definition of a polynomial
functor, due to
Goodwillie \cite{Goo92}, via $n$-excisivity, defined more generally outside the
stable context. 
For convenience, we describe the comparison below. 
\begin{definition}

Let $[n] = \left\{0, 1, \dots, n\right\}$, and let $\mathcal{P}([n])$ denote the
nerve of the poset of subsets of $[n]$, so that $\mathcal{P}([n]) \simeq
(\Delta^1)^{n+1}$. 
An \emph{$(n+1)$-cube} in $\mathcal{C}$ is a functor $f:\mathcal{P}([n]) \to
\mathcal{C}$. The $(n+1)$-cube is said to be \emph{strongly coCartesian} if it is 
left Kan extended from the subset $\mathcal{P}_{\leq 1}([n]) \subset
\mathcal{P}([n])$ spanned by subsets of cardinality $\leq 1$, and
\emph{coCartesian} 
if it is a colimit diagram. Note that a diagram is coCartesian if and only if
it is a limit or \emph{Cartesian} diagram by \cite[Prop.
1.2.4.13]{HA}. 
\end{definition}

\begin{definition}[{Goodwillie \cite[Def. 3.1]{Goo92}}] 
Let $\mathcal{C}, \mathcal{D}$ be small stable $\infty$-categories and let $F:
\mathcal{C} \to \mathcal{D}$ be a functor. For $n \geq 0$, we say that $F$ is
\emph{$n$-excisive} if $F$ carries strongly coCartesian $(n+1)$-cubes to coCartesian
$(n+1)$-cubes. \end{definition}

\begin{example} 
For $n  = 0$, the condition is that $F$ should be a constant functor. 
For $n = -1$, we say that $F$ is $n$-excisive if $F$ is identically
zero.  For $n =1$, $F$ is $1$-excisive (or simply excisive) if it carries
pushouts to pullbacks. 

\end{example}

\begin{proposition} 
The functor $F: \mathcal{C} \to \mathcal{D}$ is polynomial of degree $\leq n$
(in the sense of Definition~\ref{polynomialdef})  if and
only if it is $n$-excisive. 
\end{proposition}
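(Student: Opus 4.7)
The plan is to exploit the identification of iterated Eilenberg--MacLane cross-effects with total cofibers of strongly coCartesian cubes of the form $S \mapsto \bigoplus_{i \in S} X_i$, which is the bridge between the two notions of polynomiality.

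First I would establish the following key computation. Given objects $X_0, \ldots, X_n \in \mathcal{C}$, consider the strongly coCartesian $(n+1)$-cube $c \colon \mathcal{P}([n]) \to \mathcal{C}$ defined by $c(S) = \bigoplus_{i \in S} X_i$ (so that $c(\emptyset) = 0$). For each $i \notin S$, the inclusion $c(S) \to c(S \cup \{i\})$ is split by the canonical projection, so the induced map $F(c(S)) \to F(c(S \cup \{i\}))$ is also split on the homotopy category, with cofiber the Eilenberg--MacLane cross-effect $D_{X_i}F(\bigoplus_{j \in S} X_j)$. Computing the total cofiber of $F \circ c$ by taking iterated cofibers in each of the $n+1$ coordinate directions identifies it with the iterated cross-effect $D_{X_0} D_{X_1} \cdots D_{X_n} F(0)$; its vanishing for all choices of $X_i$ is equivalent to $\ho(F)$ being polynomial of degree $\leq n$ in the Eilenberg--MacLane sense.

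For the $(\Leftarrow)$ direction, if $F$ is $n$-excisive then $F \circ c$ is coCartesian for each cube above, hence its total cofiber vanishes, yielding the Eilenberg--MacLane polynomial condition on $\ho(F)$. That $F$ also preserves finite geometric realizations follows from the standard fact that in the stable setting an $n$-excisive functor preserves sifted colimits; this in turn follows via the Goodwillie tower description $F \simeq P_n F$, which assembles $F$ out of symmetric multilinear functors (each preserving colimits in every variable) by taking iterated cofibers.

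For the $(\Rightarrow)$ direction, assume $F$ is polynomial of degree $\leq n$ in our sense and let $f$ be a strongly coCartesian $(n+1)$-cube with $f(\emptyset) = A$. I would reduce to the case $A = 0$ via a bar-type simplicial resolution: using the maps $A \to f(\{i\})$, one constructs a finitely-skeletal simplicial object $f_\bullet \in \fun(\Delta^{op}, \fun(\mathcal{P}([n]), \mathcal{C}))$ whose values $f_k$ are strongly coCartesian cubes with $f_k(\emptyset) = 0$, such that $|f_\bullet| \simeq f$. Since $F$ preserves finite realizations by hypothesis, $F \circ f \simeq |F \circ f_\bullet|$. By the key computation and the vanishing cross-effect hypothesis, each $F \circ f_k$ is coCartesian; since coCartesianness of $(n+1)$-cubes is closed under colimits, $F \circ f$ is coCartesian. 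The hard part will be constructing this simplicial resolution --- exhibiting an arbitrary strongly coCartesian cube as a finite geometric realization of ``base-zero'' strongly coCartesian cubes with the right skeletality --- and a secondary subtlety is the invocation of Goodwillie tower machinery in the $(\Leftarrow)$ direction to derive finite realization preservation from $n$-excisivity.
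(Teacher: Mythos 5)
Your $(\Leftarrow)$ direction is essentially the paper's argument: the total-cofiber identification with iterated cross-effects gives the Eilenberg--MacLane condition on $\ho(F)$, and preservation of finite geometric realizations is deduced from the Goodwillie tower and the cofinality of $\Delta^{op} \to (\Delta^{op})^i$ for the multilinear homogeneous layers. One gap you should fill: your cubes are anchored at $c(\emptyset)=0$, so you only get vanishing of $(D_{X_0}\cdots D_{X_n}F)(0)$, whereas degree $\leq n$ requires vanishing of $(D_{X_0}\cdots D_{X_n}F)(X)$ for every base $X$. This does follow (use the bi-additivity identity $D_{Y\oplus Y'} = D_Y \oplus D_{Y'} \oplus D_Y D_{Y'}$ to deduce vanishing of all longer iterated cross-effects at $0$, then write $(D_{X_0}\cdots D_{X_n}F)(X) = (D_{X_0}\cdots D_{X_n}F)(0) \oplus (D_{X_0}D_X D_{X_1}\cdots D_{X_n}F)(0)$), but it is a non-trivial step that you assert without proof. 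The paper sidesteps it entirely by working with cubes whose initial vertex is an arbitrary object $X$, so the total cofiber is literally $(D_{Z_0}\cdots D_{Z_n}F)(X)$.

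The $(\Rightarrow)$ direction has a genuine gap. You propose to write an arbitrary strongly coCartesian $(n+1)$-cube $f$ with $f(\emptyset)=A$ as a finite geometric realization $|f_\bullet|$ of strongly coCartesian cubes $f_k$ satisfying $f_k(\emptyset)=0$. But colimits in $\fun(\mathcal{P}([n]),\mathcal{C})$ are computed pointwise, so $|f_\bullet|(\emptyset) = |f_\bullet(\emptyset)|$ would be the realization of the constant-zero simplicial object, i.e.\ $0$, which cannot equal $A$ when $A\neq 0$. No such resolution exists, so this is not merely ``the hard part'' but an impossible step. The paper's reduction is different and avoids this: it leaves the base $X=f(\emptyset)$ alone and instead resolves each edge $X\to Y_i$ in $\fun(\Delta^1,\mathcal{C})$ as a finite geometric realization of split arrows $X \to X\oplus Z_i$ (e.g.\ via the \v Cech nerve of $X\to Y_i$, which is $1$-skeletal and keeps the source constant at $X$). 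After this reduction the iterated-cofiber computation directly identifies the total cofiber with $(D_{Z_0}\cdots D_{Z_n}F)(X)$, which vanishes by the degree hypothesis, with no need to force the base to zero.
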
 
\begin{proof} 
Suppose first that $F$ is polynomial of degree $\leq n$, and fix 
a strongly coCartesian $(n+1)$-cube in $\mathcal{C}$, which is determined by a
collection of maps $X \to Y_i$, $i = 0, 1, \dots, n$, in $\mathcal{C}.$ We
would like to show that $F$ carries this $(n+1)$-cube to a coCartesian one. Since
every object in $\fun(\Delta^1, \mathcal{C})$ is a finite geometric realization 
of arrows of the form $A \to A \oplus B$, and $F$ preserves finite 
geometric realizations, we may assume that we have $Y_i
\simeq X \oplus Z_i$ for objects $Z_i, i = 0, 1, \dots, n$. 

Denote the above strongly coCartesian $(n+1)$-cube by $ f: \mathcal{P}([n]) \to
\mathcal{C}$. Let $\mathcal{P}'( [n]) \subset \mathcal{P}([n])$ be the
complement of the terminal object. 
The cofiber
$\varinjlim_{\mathcal{P}'([n])} F \circ f \to F ( f([n]))$ 
is given precisely by the iterated derivative $D_{Z_0} D_{Z_1} \dots D_{Z_n} F
(X)$, as follows easily by induction on $n$. 
Thus, if $F$ is polynomial of degree $\leq n$, we see that 
$F$ is $n$-excisive. 

Conversely, if $F$ is $n$-excisive, the previous paragraph shows that $\ho(F)$
is polynomial of degree $\leq n$. It remains to show that $F$ preserves finite
geometric realizations. 
This follows from the general theory and classification of $n$-excisive
functors, cf.~also \cite[Prop.~3.36]{BM19} for an account. 
We can form the embedding $\mathcal{D} \hookrightarrow
\mathrm{Ind}(\mathcal{D})$ to replace $\mathcal{D}$ by a presentable stable
$\infty$-category; this inclusion preserves finite geometric realizations. In
this case, the general theory shows that $F$ can be built up via a finite
filtration from its
\emph{homogeneous layers}, and each homogeneous layer 
of degree $i$
is of the form $X \mapsto B(X, X, \dots, X)_{h \Sigma_i}$ for $B: \mathcal{C}^i
\to \mathrm{Ind}(\mathcal{D})$ a functor which is exact in each variable and symmetric in its
variables (see \cite[Sec. 6.1]{HA} for a reference in this setting). 
Therefore, it suffices to show that if $B: \mathcal{C}^i \to
\mathrm{Ind}(\mathcal{D})$ is a functor which is exact in each variable, then
$X \mapsto B(X, X, \dots, X)$  preserves finite geometric realizations. 
This now follows from the cofinality of the diagonal $\Delta^{op} \to
(\Delta^{op})^i$ and the fact that
$B$ preserves finite geometric realizations in each variable separately, since
it is exact. 
\end{proof}

\subsection{Construction of polynomial functors}
We now discuss some examples of polynomial functors on stable
$\infty$-categories; these will arise from the derived functors of polynomial
functors of additive $\infty$-categories. 
We first review the relationship between additive, prestable,  and stable
$\infty$-categories. Compare \cite[Sec. C.1.5]{SAG}.

\begin{cons}[The stable envelope] 
Given any small additive $\infty$-category $\mathcal{A}$, there is a universal stable
$\infty$-category $\stab(\mathcal{A})$ equipped with an additive, fully faithful functor
$\mathcal{A} \to \stab(\mathcal{A})$. 
Given any small stable $\infty$-category $\mathcal{B}$, any additive functor
$\mathcal{A} \to \mathcal{B}$ canonically extends to an exact functor
$\stab(\mathcal{\mathcal{A}}) \to \mathcal{B}$. 
In other words, $\stab$ is a left adjoint from the natural forgetful
functor from the $\infty$-category of small stable $\infty$-categories to the
$\infty$-category of small additive $\infty$-categories.
We refer to $\stab(\mathcal{A})$ as the \emph{stable envelope} of $\mathcal{A}$. 

Explicitly, $\stab(\mathcal{A})$ is the stable subcategory of
the $\infty$-category
$\fun^{\times}(\mathcal{A}^{op}, \sp)$ of finitely product-preserving
presheaves of spectra on $\mathcal{A}$ generated by the image of the Yoneda
embedding. 
\end{cons} 

\begin{cons}[The prestable envelope $\stab(\mathcal{A})_{\geq 0}$]
Let $\mathcal{A}$ be a small additive  $\infty$-category as above, and let
$\stab(\mathcal{A})$ be its stable envelope. 
We let $\stab(\mathcal{A})_{\geq 0}$ 
denote the 
subcategory of the nonabelian derived $\infty$-category 
\cite[Sec.~5.5.8]{Lur09}
$\mathcal{P}_{\Sigma}(\mathcal{A})$ generated under finite colimits 
by $\mathcal{A}$. Then $\stab(\mathcal{A})_{\geq 0}$ is a prestable
$\infty$-category \cite[Appendix C]{SAG} and is the universal prestable
$\infty$-category receiving an additive functor $\mathcal{A} \to
\mathcal{P}_{\Sigma}(\mathcal{A})$, which we call the \emph{prestable envelope}
of $\mathcal{A}$. There are fully faithful embeddings
\[ \mathcal{A} \subset \stab(\mathcal{A})_{\geq 0} \subset \stab(\mathcal{A}),  \]
and $\stab(\mathcal{A})$ is also the stabilization of the prestable
$\infty$-category $\stab(\mathcal{A})_{\geq 0}$. 
\end{cons}

\begin{example} 
Let $R$  be a ring, or more generally a connective $\mathbb{E}_1$-ring spectrum. 
Then one has a natural additive $\infty$-category $\proj_R^\omega$ of finitely
generated, projective $R$-modules. The stable envelope is given by the
$\infty$-category $\perf(R)$ of perfect $R$-modules, and the prestable envelope
is given by the $\infty$-category $\perf(R)_{\geq 0}$ of connective perfect
$R$-modules. 
\end{example} 
Our main result is the following extension principle, which allows one to
extend polynomial functors from an additive $\infty$-category to the stable
envelope. 
\begin{theorem}[Extension of polynomial functors]
\label{extendpolyfunctor}
 Let $\mathcal{D}$ be a stable $\infty$-category and $\mathcal{A}$ be an additive $\infty$-category.
Pullback along the functor $\mathcal{A} \to \stab(\mathcal{A})$ induces an
equivalence of $\infty$-categories
\[ \fun_{\leq n}(\mathcal{A}, \mathcal{D}) \simeq \fun_{\leq
n}(\stab(\mathcal{A}), \mathcal{D}),  \]
between degree $\leq n$ functors $\mathcal{A} \to \mathcal{D}$ (in the sense of
additive $\infty$-categories) and degree $\leq n$ functors (in the sense of
stable $\infty$-categories) $\stab(\mathcal{A}) \to \mathcal{D}$. 
\end{theorem}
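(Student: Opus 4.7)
The direction ``polynomial on $\stab(\mathcal{A})$ restricts to polynomial on $\mathcal{A}$'' is automatic, since polynomiality in the stable sense is imposed at the level of homotopy categories and $\ho(\mathcal{A})$ sits as a full additive subcategory of $\ho(\stab(\mathcal{A}))$ (the inclusion being fully faithful and additive). The content is therefore the construction of the extension and the verification that the two composites are equivalent to the identity.

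My approach would proceed via the Eilenberg--MacLane cross-effect decomposition. Recall that a polynomial functor $F:\mathcal{A}\to\mathcal{D}$ of degree $\leq n$ between additive $\infty$-categories is equivalent data to its tower of multi-additive cross-effects $cr_i(F):\mathcal{A}^{\times i}\to\mathcal{D}$ for $0\leq i\leq n$, together with coherent $\Sigma_i$-equivariance and gluing data. Each $cr_i(F)$ is additive in each variable, so by the universal property of the stable envelope applied slot-by-slot, it extends uniquely to a functor $\widetilde{cr_i(F)}:\stab(\mathcal{A})^{\times i}\to\mathcal{D}$ which is exact in each variable. Reassembling these produces the candidate $\tilde F:\stab(\mathcal{A})\to\mathcal{D}$. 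To check $\tilde F$ is polynomial of degree $\leq n$ in the stable sense, I verify: (i) the underlying functor of homotopy categories is polynomial of degree $\leq n$, which is immediate from the cross-effect reconstruction; (ii) $\tilde F$ preserves finite geometric realizations, which reduces via cofinality of the diagonal $\Delta^{op}\to(\Delta^{op})^i$ to the fact that multi-exact functors preserve finite geometric realizations separately in each variable---the same trick used in the last paragraph of the proof that $n$-excisive $\Leftrightarrow$ polynomial above. Uniqueness of the extension then follows because passing from $\tilde F$ back to its cross-effect tower inverts the reassembly.

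The main obstacle will be making the cross-effect decomposition and reassembly coherent at the full $\infty$-categorical level---tracking the $\Sigma_i$-equivariance and higher gluing data so that the reassembled functor is genuinely an $n$-excisive functor, not merely something having the correct cross-effects. A cleaner route that sidesteps explicit reassembly would be: first replace $\mathcal{D}$ by $\mathrm{Ind}(\mathcal{D})$ (presentable stable), left Kan extend $F$ along the Yoneda embedding to a functor $\mathcal{P}_\Sigma(\mathcal{A})\to\mathrm{Ind}(\mathcal{D})$, pass to the stabilization to obtain a functor $\stab(\mathcal{A})\to\mathrm{Ind}(\mathcal{D})$, and apply Goodwillie's $P_n$ construction to enforce $n$-excisivity. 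Using the explicit classification of $n$-excisive functors from a presentable stable $\infty$-category in terms of symmetric multilinear derivatives \cite[Sec.~6.1]{HA}, one then verifies via the $nk$-skeletality lemma above that the resulting functor lands in $\mathcal{D} \subset \mathrm{Ind}(\mathcal{D})$, restricts to $F$ on $\mathcal{A}$, and is inverse to the restriction functor.
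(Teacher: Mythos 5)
Your second, ``cleaner'' route is close in spirit to the paper's argument---both embed $\mathcal{D}$ in $\mathrm{Ind}(\mathcal{D})$, pass through $\mathcal{P}_\Sigma(\mathcal{A})$ via the universal property of the nonabelian derived category, and land on a presentable picture---but it contains a genuine gap exactly at the step you label ``pass to the stabilization.'' A left Kan extension of a degree-$\leq n$ functor $F:\mathcal{A}\to\mathrm{Ind}(\mathcal{D})$ along Yoneda preserves sifted colimits, but not all colimits (it is not additive unless $n\leq 1$), so there is no formal factorization through the stabilization of $\mathcal{P}_\Sigma(\mathcal{A})$. The paper's argument instead restricts to the prestable envelope $\stab(\mathcal{A})_{\geq 0}\subset\mathcal{P}_\Sigma(\mathcal{A})$, obtaining an $n$-excisive functor on the connective objects, and then invokes the crucial Lemma~\ref{extendfromconnective} (attributed to Brantner, cf.~\cite[Th.~3.35]{BM19}): restriction from $\stab(\mathcal{A})$ to $\stab(\mathcal{A})_{\geq 0}$ is an \emph{equivalence} on $\infty$-categories of $n$-excisive functors into a presentable stable target. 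This extension from connective to all objects is the real content of the theorem; the rest of the argument is bookkeeping about which colimit-preservation conditions match up. Your proposal does not contain a mechanism for this extension.

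Your invocation of $P_n$ is also a warning sign. If the Yoneda Kan extension of a degree-$\leq n$ functor on $\mathcal{A}$ were not already $n$-excisive on $\mathcal{P}_\Sigma(\mathcal{A})$, then applying $P_n$ would alter it, and there would be no guarantee that the result still restricts to $F$ on $\mathcal{A}$. In fact the LKE is already degree $\leq n$ (polynomiality in the additive sense commutes with sifted colimits), so $P_n$ is the identity here; but your framing suggests you are not sure of this, and the step carries no weight. Finally, your first route via coherent cross-effect reassembly is a genuinely different and more explicit approach, but the coherence problem you flag (tracking $\Sigma_i$-equivariance and higher gluing data so that the reassembled object is $n$-excisive, not merely has the right cross-effects) is real and not obviously cheaper than the Brantner lemma; the paper does not attempt it. If you want to repair your argument along the second route, the missing ingredient is precisely a proof (or citation) that $n$-excisive functors extend uniquely from a prestable $\infty$-category to its stabilization.
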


We use the following crucial observation 
due to Brantner.  

\begin{lemma}[{Extending from the connective objects,
cf.~\cite[Th.~3.35]{BM19}}] 
\label{extendfromconnective}
Let $\mathcal{D}$ be a presentable, stable $\infty$-category. 
Restriction induces an equivalence
of $\infty$-categories
\[ \fun_{\leq n}( \stab(\mathcal{A})_{\geq 0}, \mathcal{D}) \simeq
\fun_{\leq n}(\stab(\mathcal{A}), \mathcal{D}),
\]
between $n$-excisive functors $\stab(\mathcal{A})_{\geq 0} \to \mathcal{D}$ and
 $n$-excisive functors
$\stab(\mathcal{A}) \to \mathcal{D}$. 
\end{lemma}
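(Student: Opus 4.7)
The plan is to exhibit an explicit inverse to the restriction functor by combining Lurie's Goodwillie-calculus machinery with the universal property of the stable envelope, and then to verify invertibility one homogeneous layer at a time. Write $\iota : \stab(\mathcal{A})_{\geq 0} \hookrightarrow \stab(\mathcal{A})$ for the fully faithful inclusion, which preserves the zero object and all finite colimits (since $\stab(\mathcal{A})_{\geq 0}$ is by construction the finite-colimit completion of $\mathcal{A}$ inside $\mathcal{P}_\Sigma(\mathcal{A})$).

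First I would check that restriction $\rho := \iota^*$ is well defined on $\fun_{\leq n}$: the inclusion $\iota$ sends strongly coCartesian $(n+1)$-cubes in $\stab(\mathcal{A})_{\geq 0}$ to strongly coCartesian cubes in $\stab(\mathcal{A})$, and also preserves coCartesian cubes, so the restriction of an $n$-excisive functor remains $n$-excisive. An inverse will be constructed as $\lambda(F_0) := P_n(\mathrm{Lan}_\iota F_0)$, where the left Kan extension exists because $\mathcal{D}$ is presentable, and Goodwillie's $n$-excisive approximation $P_n$ is the universal $n$-excisive approximation from the right.

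To verify $\rho \circ \lambda \simeq \mathrm{id}$, I would use full faithfulness of $\iota$ to identify $(\mathrm{Lan}_\iota F_0) \circ \iota \simeq F_0$, and the fact that $P_n$ commutes with restriction along finite-colimit-preserving embeddings (since $\iota$ preserves the cubes used to build $P_n$); as $F_0$ is already $n$-excisive, $P_n F_0 \simeq F_0$. The interesting direction, $\lambda \circ \rho \simeq \mathrm{id}$, I would reduce to a statement about symmetric multi-exact functors via the Taylor tower. Both $F$ and $\lambda \rho F$ (for an $n$-excisive $F : \stab(\mathcal{A}) \to \mathcal{D}$) fit into finite towers whose $i$-homogeneous layers have the form $X \mapsto B_i(X, \dots, X)_{h\Sigma_i}$, with $B_i : \stab(\mathcal{A})^i \to \mathcal{D}$ symmetric and exact in each variable (as recalled in the previous proposition). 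It then suffices to show that any such multi-exact $B_i$ is determined by its restriction to $\stab(\mathcal{A})_{\geq 0}^i$, and that any symmetric multi-exact $B_i^0$ on $\stab(\mathcal{A})_{\geq 0}^i$ extends uniquely. This is the universal property of the stable envelope, applied variable-by-variable: for the stable target $\mathcal{D}$, restriction along $\stab(\mathcal{A})_{\geq 0} \hookrightarrow \stab(\mathcal{A})$ induces an equivalence on exact functors (since $\stab(\mathcal{A})$ is the stabilization of the prestable $\stab(\mathcal{A})_{\geq 0}$), and the multivariable version follows by iteration in each slot.

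The hard part, I expect, is handling the gluing data of the Taylor tower rather than its layers in isolation: one must show that the successive attaching maps $P_i F \to P_{i-1} F$ also extend compatibly. My plan is induction on $n$. The fiber sequence $D_n F \to P_n F \to P_{n-1} F$ reduces the problem to two pieces: the extension of $P_{n-1} F$, handled by the inductive hypothesis, and the extension of the classifying map from $P_{n-1} F$ valued in the $n$-homogeneous layer $D_n F$, which by the layer analysis above lifts uniquely under the already-established equivalence in degree $\leq n-1$. This inductive step, together with the base case $n=0$ (constant functors, which trivially extend uniquely), completes the proof.
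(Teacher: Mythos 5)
The paper itself does not prove this lemma: it is stated as a citation to Brantner--Mathew \cite[Th.~3.35]{BM19} and used as a black box, so there is no internal proof to compare against. That said, your argument is essentially correct, and it is worth contrasting it with the shorter route one can take (and which the paper uses implicitly elsewhere, in the proof of \Cref{additivepolythick}).

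Your construction of the putative inverse $\lambda(F_0) = P_n(\mathrm{Lan}_\iota F_0)$ is the right one, and the verification that $\rho\lambda \simeq \mathrm{id}$ is clean: $\iota$ preserves finite colimits, hence the construction $C_X(S)$ entering $T_n$, so $\iota^* P_n \simeq P_n^{\geq 0}\iota^*$, and full faithfulness of $\iota$ gives the rest. For $\lambda\rho \simeq \mathrm{id}$ you pass to the Taylor tower and argue layer by layer; this works (the cross-effects commute with $\iota^*$, the layer of a homogeneous functor is classified by a symmetric multi-exact functor, and the universal property of the Spanier--Whitehead stabilization, applied variable by variable, pins down each $B_k$ from its restriction), but it is more machinery than is needed. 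In particular, the passage through ``extension of attaching maps'' is a step you then dissolve anyway, since the comparison $\lambda\rho F \to F$ is a \emph{given} natural transformation that automatically induces a map of towers --- one only has to check it is an equivalence on each $D_k$, not re-glue anything.

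The shorter argument, which avoids the tower entirely, runs as follows. Both $F$ and $\lambda\rho F$ are $n$-excisive on $\stab(\mathcal{A})$, so $G \simeq T_n G$ for each of them, and $T_n G(X)$ is a finite limit of values $G(\Sigma X^{\oplus j})$ for $0 \le j \le n$. The natural map $\lambda\rho F \to F$ is compatible with these finite limits, so if it is an equivalence on $\Sigma X^{\oplus j}$ for all $j \le n$ then it is an equivalence at $X$. Since every $X \in \stab(\mathcal{A})$ has $\Sigma^k X \in \stab(\mathcal{A})_{\geq 0}$ for $k \gg 0$, and connective objects are closed under finite direct sums, iterating this observation $k$ times reduces to the connective case, where the equivalence holds by your part (a). This is exactly the desuspension trick used in the proof of \Cref{additivepolythick}; invoking it here gives a uniform and self-contained treatment and sidesteps any appeal to the classification of homogeneous functors or to the multi-exact universal property of the stable envelope. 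Your route is not wrong, but it leans on heavier inputs than the statement requires.
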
 

\begin{lemma} 
\label{additivepolythick}
Let $\mathcal{C}, \mathcal{D}$ be stable $\infty$-categories. 
Let $\mathcal{A} \subset \mathcal{C}$ be an additive 
subcategory which generates $\mathcal{C}$ as a stable subcategory. 
Let $F: \mathcal{C} \to \mathcal{D}$ be a degree $\leq n$ functor. Suppose
$F|_{\mathcal{A}}$ has image contained in a stable subcategory $\mathcal{D}'
\subset \mathcal{D}$. Then $F$ has image contained in $\mathcal{D}'$.
\end{lemma}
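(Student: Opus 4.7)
My plan is to deduce the lemma directly from the universal extension property recorded in Theorem \ref{extendpolyfunctor}. Since $\mathcal{A}$ is an additive subcategory of the stable $\infty$-category $\mathcal{C}$, the universal property of the stable envelope applied to the inclusion $\mathcal{A} \hookrightarrow \mathcal{C}$ produces a canonical exact functor $i : \stab(\mathcal{A}) \to \mathcal{C}$. The essential image of $i$ is a stable subcategory of $\mathcal{C}$ containing $\mathcal{A}$; by the assumption that $\mathcal{A}$ generates $\mathcal{C}$ as a stable subcategory, this essential image is all of $\mathcal{C}$, so $i$ is essentially surjective. It therefore suffices to prove that $F \circ i : \stab(\mathcal{A}) \to \mathcal{D}$ has image in $\mathcal{D}'$, and $F \circ i$ is itself polynomial of degree $\leq n$ because exact functors are linear and polynomial degree is multiplicative under composition.

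The crucial step is to apply Theorem \ref{extendpolyfunctor} twice. On the one hand, $F \circ i$ is a degree $\leq n$ polynomial extension of $F|_{\mathcal{A}} : \mathcal{A} \to \mathcal{D}$ along the canonical map $\mathcal{A} \to \stab(\mathcal{A})$. On the other hand, by hypothesis $F|_{\mathcal{A}}$ factors through the stable subcategory $\mathcal{D}' \subset \mathcal{D}$, so Theorem \ref{extendpolyfunctor}, applied now with target $\mathcal{D}'$ in place of $\mathcal{D}$, produces a polynomial extension $\widetilde{F} : \stab(\mathcal{A}) \to \mathcal{D}'$. Post-composing $\widetilde{F}$ with the fully faithful exact inclusion $\mathcal{D}' \hookrightarrow \mathcal{D}$ yields a second polynomial extension of $F|_{\mathcal{A}}$ of degree $\leq n$ valued in $\mathcal{D}$. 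By the uniqueness half of the equivalence in Theorem \ref{extendpolyfunctor} (now with target $\mathcal{D}$), these two extensions must be equivalent, and so $F \circ i$ factors through $\mathcal{D}'$, as desired.

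I do not foresee any serious obstacle; the argument is essentially formal given the strength of Theorem \ref{extendpolyfunctor}. The only point requiring a little care is the verification that $i$ is essentially surjective, which amounts to unwinding the content of the ``generates as a stable subcategory'' hypothesis against the universal property that characterizes $\stab(\mathcal{A})$ as the free stable $\infty$-category on $\mathcal{A}$.
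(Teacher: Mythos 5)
Your proposal is circular: Theorem~\ref{extendpolyfunctor} is proved in the paper by first embedding $\mathcal{D}$ into $\mathrm{Ind}(\mathcal{D})$ and then invoking Lemma~\ref{additivepolythick} precisely to justify that this reduction loses no generality (i.e., that the extended functor, which a priori lands in $\mathrm{Ind}(\mathcal{D})$, actually lands in $\mathcal{D}$). So you cannot turn around and use Theorem~\ref{extendpolyfunctor} to establish the lemma. Moreover your argument needs the theorem with non-presentable targets $\mathcal{D}$ and $\mathcal{D}'$, which is exactly the case that depends on the lemma.

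There is also a second, independent difficulty: the claim that the essential image of $i \colon \stab(\mathcal{A}) \to \mathcal{C}$ is a stable subcategory, and hence that $i$ is essentially surjective, is not justified. The functor $i$ is generally not fully faithful; the mapping spectra in $\stab(\mathcal{A})$ between objects of $\mathcal{A}$ are (roughly) the connective covers of the corresponding mapping spectra in $\mathcal{C}$, so once you shift, $\mathcal{C}$ has morphisms between image objects that do not lift to $\stab(\mathcal{A})$. Since the cofiber of a non-liftable morphism need not lie in the essential image, the essential image need not be closed under cofibers, so you cannot conclude it equals all of $\mathcal{C}$ merely because $\mathcal{A}$ generates. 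The paper instead argues directly: it filters $\mathcal{C}$ by \emph{levels} (the additive hull of shifts of $\mathcal{A}$-objects, then iterated extensions), shows $F$ carries level~$1$ into $\mathcal{D}'$ using that $F$ preserves finite geometric realizations (for positive shifts) and the $T_n$-construction expressing $F(X)$ as a finite limit of values on direct sums of $\Sigma X$ (for negative shifts), and handles higher levels by inducting on extensions via the \v{C}ech-nerve resolution of Construction~\ref{cechnerve}. That hands-on filtration argument is the content you need to supply, and it sidesteps both the circularity and the essential-surjectivity issue.
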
 
\begin{proof} 
We use essentially the notions of \emph{levels} in $\mathcal{C}$, cf. \cite[Sec.
2]{ABIM}, although we are not assuming idempotent completeness. 

We define an increasing and exhaustive filtration
of subcategories
$\mathcal{C}_{\leq 1} \subset \mathcal{C}_{\leq 2} \subset \dots \subset
\mathcal{C}$ as follows. 
The subcategory $\mathcal{C}_{\leq 1}$ is the additive closure of $\mathcal{A}$
under shifts, so any object of $\mathcal{C}_{\leq 1}$ can be written in the form 
$\Sigma^{i_1} A_1 \oplus \dots \oplus \Sigma^{i_n} A_n$ for some 
$A_1, \dots, A_n \in \mathcal{A}$ and $i_1, \dots, i_n \in \mathbb{Z}$. 
Inductively, we let $\mathcal{C}_{\leq n}$ denote the subcategory of objects
that are extensions of objects in $\mathcal{C}_{\leq a} $ and $\mathcal{C}_{\leq
b}$ for $0 < a,b < n$ with $a + b \leq n$. 
Each $\mathcal{C}_{\leq n}$ is closed under translates, and it is easy to see
that $\bigcup_{n} \mathcal{C}_{\leq n}$ is stable and contains $\mathcal{A}$ and
hence equals $\mathcal{C}$. 

We claim first that $F(\mathcal{C}_{\leq 1}) \subset \mathcal{D}$. 
That is, we need to show that 
for $A_1, \dots, A_n \in \mathcal{A}$ and $i_1, \dots, i_n \in \mathbb{Z}$, we
have $F(\Sigma^{i_1} A_1 \oplus \dots \oplus \Sigma^{i_n} A_n) \in \mathcal{D}$. 
If $i_1, \dots, i_n \geq 0$, then we can choose a simplicial object in
$\mathcal{A}$ which is $d$-truncated for some $d$ and whose geometric
realization is 
$\Sigma^{i_1} A_1 \oplus \dots \oplus \Sigma^{i_n} A_n$. Since $F$ preserves
finite geometric realizations, the claim follows. 
In general, for any object $X \in \mathcal{C}$, we can recover $F(X)$ as a
finite homotopy limit of $F(0), F(\Sigma X), F(\Sigma X \oplus \Sigma X ),
\dots$ via the $T_n$-construction, since $F$ is $n$-excisive. Using this, we
can remove the hypotheses that $i_1, \dots, i_n \geq 0$ and conclude that
$F(\mathcal{C}_{\leq 1}) \subset \mathcal{D}.$

Given an object of $\mathcal{C}_{\leq n}$ with $n > 1$, 
it is  an extension of objects in
$\mathcal{C}_{\leq a}$ and $\mathcal{C}_{\leq b}$ for some $a, b < n$. 
In view of Construction~\ref{cechnerve} below, 
it follows that  any such object can be written as
a finite geometric realization of objects of level $< n$. 
Since $F$ preserves finite geometric realizations, it follows by induction that
$F(\mathcal{C}_{\leq n}) \subset \mathcal{D}$. 
\end{proof}

\begin{proof}[Proof of \Cref{extendpolyfunctor}] 
Embedding $\mathcal{D}$ inside $\mathrm{Ind}(\mathcal{D})$, we may assume that
$\mathcal{D}$ is actually presentable. 
By Lemma~\ref{additivepolythick}, we do not lose any generality by doing so. 

By the universal property of $\mathcal{P}_\Sigma$, we have an equivalence 
\begin{equation} \label{cats1}
\fun_{\leq n}(\mathcal{A}, \mathrm{Ind}(\mathcal{D})) \simeq
\fun_{\leq n}^{\Sigma}(\mathcal{P}_{\Sigma}(\mathcal{A}),
\mathrm{Ind}(\mathcal{D}))
,  
\end{equation}
where $\Sigma$ denotes functors which preserve sifted colimits; the universal
property gives this without the $\leq n$ condition, which we can then impose. 
Now 
the inclusion
$\stab(\mathcal{A})_{\geq 0} \subset \mathcal{P}_{\Sigma}(\mathcal{A})$ 
exhibits the target as the $\mathrm{Ind}$-completion of the source,
which yields an equivalence \begin{equation} \label{cats2}  \fun_{\leq n}(
\stab(\mathcal{A})_{\geq 0}, \mathrm{Ind}(\mathcal{D})) \simeq
\fun_{\leq n}^\omega( \mathcal{P}_\Sigma(\mathcal{A}),
\mathrm{Ind}(\mathcal{D})),
\end{equation}
where $\omega$ denotes functors which preserve filtered colimits. 
By definition, any functor in 
$\fun_{\leq n}^\omega( \mathcal{P}_\Sigma(\mathcal{A}),
\mathrm{Ind}(\mathcal{D}))$ preserves finite geometric realizations, and hence
all geometric realizations; thus it also preserves all sifted colimits. 
This shows that the categories in \eqref{cats1} and \eqref{cats2} are
identified. 
Now the result follows by combining this identification with \Cref{extendfromconnective}. 
\end{proof}

\begin{corollary} 
Let $\mathcal{A} \to \mathcal{B}$ be additive $\infty$-categories. 
Then a degree $\leq n$ functor $\mathcal{A} \to \mathcal{B}$ canonically prolongs to
a degree $\leq n$ functor of stable $\infty$-categories, $\stab(\mathcal{A}) \to \stab(\mathcal{B})$. \qed
\end{corollary}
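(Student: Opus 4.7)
The plan is to reduce the corollary to a direct application of \Cref{extendpolyfunctor}. Given a degree $\leq n$ functor $F: \mathcal{A} \to \mathcal{B}$ between additive $\infty$-categories, I first compose with the fully faithful additive embedding $\iota_{\mathcal{B}}: \mathcal{B} \hookrightarrow \stab(\mathcal{B})$ to obtain a functor
\[ \iota_{\mathcal{B}} \circ F : \mathcal{A} \longrightarrow \stab(\mathcal{B}). \]
The first step is to check that $\iota_{\mathcal{B}} \circ F$ is still polynomial of degree $\leq n$ (now viewed as a functor into the additive $\infty$-category underlying the stable $\infty$-category $\stab(\mathcal{B})$). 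This is immediate from the inductive definition: because $\iota_{\mathcal{B}}$ is additive and fully faithful, it is compatible with direct sums and with formation of the kernels defining the derivatives $D_Y(-)$, so $D_Y(\iota_{\mathcal{B}} \circ F) \simeq \iota_{\mathcal{B}} \circ D_Y F$, and one concludes by induction on $n$.

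Next, I apply \Cref{extendpolyfunctor} with the target stable $\infty$-category $\mathcal{D} := \stab(\mathcal{B})$. This yields an equivalence
\[ \fun_{\leq n}(\mathcal{A}, \stab(\mathcal{B})) \simeq \fun_{\leq n}(\stab(\mathcal{A}), \stab(\mathcal{B})), \]
and the image of $\iota_{\mathcal{B}} \circ F$ under this equivalence is the desired canonical extension $\widetilde{F}: \stab(\mathcal{A}) \to \stab(\mathcal{B})$, polynomial of degree $\leq n$ in the sense of stable $\infty$-categories.

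The canonicity (functoriality in $F$) follows from the fact that \Cref{extendpolyfunctor} is an equivalence of $\infty$-categories, not merely a bijection of isomorphism classes, and that the assignment $F \mapsto \iota_{\mathcal{B}} \circ F$ is manifestly functorial. There is no real obstacle here; the main content has already been absorbed into \Cref{extendpolyfunctor} and \Cref{extendfromconnective}. The only thing worth flagging is that the extension lands in $\stab(\mathcal{B})$ rather than merely in some larger presentable stable $\infty$-category such as $\mathrm{Ind}(\stab(\mathcal{B}))$: this is automatic because the extension recovers $\iota_{\mathcal{B}} \circ F$ on $\mathcal{A}$, which takes values in the additive subcategory $\mathcal{B} \subset \stab(\mathcal{B})$, and then \Cref{additivepolythick} (applied with $\mathcal{C} = \stab(\mathcal{A})$, its generating additive subcategory $\mathcal{A}$, and $\mathcal{D}' = \stab(\mathcal{B}) \subset \mathrm{Ind}(\stab(\mathcal{B}))$) forces the whole extension to land in $\stab(\mathcal{B})$.
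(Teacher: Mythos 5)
Your proof is correct and follows exactly the route the paper intends: precompose with the additive embedding $\mathcal{B}\hookrightarrow\stab(\mathcal{B})$, observe that this preserves degree (since an additive fully faithful functor commutes with the derivative operators $D_Y$), and then apply Theorem~\ref{extendpolyfunctor} with $\mathcal{D}=\stab(\mathcal{B})$. The final remark about the extension landing in $\stab(\mathcal{B})$ rather than a larger category is unnecessary — the theorem is stated directly for the target $\mathcal{D}$, not for $\mathrm{Ind}(\mathcal{D})$, so this is already taken care of — but it is harmless and indeed correctly identifies the role Lemma~\ref{additivepolythick} plays inside the theorem's proof.
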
 

\begin{example} 
Let $R$ be a commutative ring. Then we have a functor $\sym^i: \perf(R) \to
\perf(R)$ which is $i$-excisive and which extends the usual symmetric powers of
finitely generated projective $R$-modules. 
We can regard this as a derived functor of the usual symmetric power, although
we are allowing nonconnective objects as well. 
\end{example}

The above construction of extending functors, for $\stab(\mathcal{A})_{\geq 0}$, 
is the classical one of Dold-Puppe \cite{DP} of ``nonabelian derived functors''
constructed using simplicial resolutions. Compare also \cite{JM97} for the
connection between polynomial functors on additive categories and $n$-excisive
functors.  
The extension to $\stab(\mathcal{A})$, at least in certain cases, goes back to Illusie
\cite[Sec. I-4]{Illusie}
in work on the cotangent complex, using simplicial cosimplicial objects.

\section{$K_0$ and polynomial functors}

\subsection{Additive $\infty$-categories}
In this section, we review the result 
of \cite{Dold72, Jou00} that
$K_0$ of additive $\infty$-categories is naturally
functorial in polynomial functors; this special case of \Cref{mainthm} will play
an essential role in its proof.

\begin{definition}[Passi \cite{Passi}]
\label{degreenmapgroup}
Let $M$ be an abelian monoid and $A$ be an abelian group. We will define
inductively when a map $f: M \to A$ (of sets) is called polynomial of degree $\leq n$.  \begin{itemize}
\item  
A map $f$ is called \emph{polynomial of degree} $\leq -1$ if it is identically
zero.
\item
A map $f$  if called \emph{polynomial of degree $\leq n$} if for each $y \in M$ the map
$D_y f: M \to A$ defined by 
$$
(D_y f)(x) := f(x+y) - f(x)
$$
is polynomial of degree $\leq n-1$.
\end{itemize}
 We say that $f$ is \emph{polynomial} if it is polynomial of degree $n$ for some $n$.

We denote the set of polynomial maps $M \to A$ of degree $\leq n$ by $\Hom_{\leq n}(M,A)$. It is straightforward to check that composing polynomial maps whenever this is defined is again polynomial and changes the degree in the obvious way.
\end{definition}

\begin{example}
A  map $f: \Z \to \Z$ is polynomial of degree $\leq n$ precisely if it  can be represented by a polynomial of degree $n$ with rational coefficients. In this case it has to be of the form 
$$f(x) = \sum_{i = 0}^n \alpha_i {{x}\choose{i}}$$
with $\alpha_i \in \Z$.
\end{example}

Now let $i: M \to M^+$ be the group completion of the abelian monoid $M$. Then
the following result states that we can always extend polynomial maps uniquely over the group completion. This is surprising if one thinks about how to extend to a formal difference.

\begin{theorem}[{\cite[Prop.~1.6]{Jou00}}]\label{thm_passi}
For any abelian monoid $M$ and abelian group $A$, 
the map 
$$
i ^*: \Hom_{\leq n}(M^+,A) \to \Hom_{\leq n}(M,A) 
$$
is a bijection. \end{theorem}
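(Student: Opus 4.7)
The plan is to induct on $n$. The base $n = -1$ is vacuous. Throughout I use two formal identities for difference operators $D_y g(x) := g(x+y) - g(x)$ on any abelian monoid or group: they commute, and they satisfy the cocycle $D_{y+z}g(x) = D_y g(x+z) + D_z g(x)$. Both are immediate from the definition.

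For injectivity at step $n$, suppose $g_1, g_2: M^+ \to A$ both extend $f$ with degree $\leq n$. Then $h := g_1 - g_2$ has degree $\leq n$ and vanishes on $M$. For each $b \in M$, $D_b h$ has degree $\leq n - 1$ and still vanishes on $M$, so the inductive hypothesis (applied to the zero map) forces $D_b h \equiv 0$. Thus $h$ is $M$-translation invariant on $M^+$, and writing an arbitrary $x \in M^+$ as $a - b$ with $a, b \in M$ yields $h(a - b) = h(a) = 0$.

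For surjectivity, given $f: M \to A$ of degree $\leq n$, the inductive hypothesis produces unique degree $\leq n - 1$ extensions $\widetilde{D_b f}: M^+ \to A$ for each $b \in M$. Uniqueness transports the commutation and cocycle identities from $M$ to $M^+$, since in each identity both sides are degree $\leq n - 1$ extensions of the same map on $M$. I then define
$$
\tilde f(a - b) := f(a) - \widetilde{D_b f}(a - b), \qquad a, b \in M.
$$
Well-definedness in the cancellative case $a + b' = a' + b$ follows by evaluating the cocycle for $\widetilde{D_{b+b'} f}$ at $x := a - b = a' - b'$ in the two possible orders and using that $x + b = a, x + b' = a' \in M$ (so $\widetilde{D_{b'} f}(x + b) = D_{b'} f(a)$ and $\widetilde{D_b f}(x + b') = D_b f(a')$) together with $f(a + b') = f(a' + b)$. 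The general relation $a + b' + c = a' + b + c$ reduces to this after checking $\tilde f(a - b) = \tilde f((a+c) - (b+c))$, which is a short calculation using the cocycle plus $D_c \widetilde{D_b f} = \widetilde{D_b D_c f} = D_b \widetilde{D_c f}$. Finally, a direct calculation using the same commutation identity shows $D_b \tilde f = \widetilde{D_b f}$ on $M^+$ for every $b \in M$; this has degree $\leq n - 1$ by induction. Since $M$ generates $M^+$ as a group, and both $D_{y+z}$ and $D_{-y}$ are expressible via $D_y$, $D_z$, and translations (which preserve degree), the property that $D_y \tilde f$ has degree $\leq n - 1$ propagates from $y \in M$ to every $y \in M^+$, so $\tilde f$ has degree $\leq n$ on $M^+$.

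The main obstacle is the well-definedness of $\tilde f$: it is the only step where the monoid structure of $M$ interacts with the polynomial structure nontrivially, requiring careful orchestration of the extended cocycle and commutation identities along with separate treatment of the stabilization element $c$ in the non-cancellative case. The other steps reduce, more or less formally, to manipulation of difference operators and direct appeal to the inductive hypothesis.
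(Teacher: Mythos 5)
Your proof is correct, but it takes a genuinely different route from the paper's. You run an explicit induction on $n$: injectivity reduces to $M$-translation-invariance of the difference via the inductive hypothesis, and surjectivity proceeds by the concrete extension formula $\tilde f(a-b) = f(a) - \widetilde{D_b f}(a-b)$, with well-definedness checked through the cocycle identity transported to $M^+$ by the inductive uniqueness, plus separate handling of the stabilizing element $c$ in the non-cancellative case. The paper instead gives what it calls an ``abstract argument via monoid and group rings'': it identifies $\Hom_{\leq n}(M, A)$ with $\Hom_{\mathrm{Ab}}(\mathbb{Z}[M]/I^{n+1}, A)$, where $I$ is the augmentation ideal (by observing that $I^{n+1}$ is additively generated by products $x(m_0 - 1)\cdots(m_n - 1)$ and that applying $\overline f$ to such a product computes the iterated difference $(D_{m_0}\cdots D_{m_n} f)(x)$), and then reduces the theorem to the statement that the ring map $\mathbb{Z}[M]/I^{n+1} \to \mathbb{Z}[M^+]/I^{n+1}$ is an isomorphism --- which holds because each $m \in M$ is a unit in $\mathbb{Z}[M]/I^{n+1}$, since $m-1$ is nilpotent there. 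The paper's route compresses the combinatorics into one ring-theoretic fact, sidesteps any case analysis around stabilization, and produces a corepresenting object $\mathbb{Z}[M]/I^{n+1}$ that the paper reuses later (in the footnote defining $R_n V$ as $\mathbb{F}_p[V]/I^{n+1}$). Your route is more elementary and yields an explicit formula for the extension; it is essentially the style of argument the paper attributes to the cited reference (Joukhovitski), of which the paper's proof is advertised as an abstract alternative.
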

The 
proof in \emph{loc.~cit.} gives an explicit argument. 
For the convenience of the reader, we include an abstract argument via monoid
and group rings. 
\begin{proof}
The first step is to reformulate the condition for a map $f: M \to A$ to be polynomial of degree $\leq n$. To do this we will temporarily for this proof write $M$ multiplicatively, in particular $1 \in M$ is the neutral element. 

A map of sets $f: M \to A$ is polynomial of degree at most $n$ precisely if the induced map
$\overline{f}: \mathbb{Z}[M] \to A$ defined by  
$$\sum_{m \in M} \alpha_m\cdot m \mapsto \sum_{m \in M} \alpha_m f(m)$$ (with $\alpha_m \in \Z$) factors over the $(n+1)$'st power $I^{n+1}$ of the augmentation ideal $I \subseteq \Z[M]$. 
In other words, there is a canonical bijection
\begin{equation} \label{eq_corep}
 \Hom_{\leq n}(M , A) \xrightarrow{\simeq} \Hom_{\mathrm{Ab}}(\mathbb{Z}[M]/ I^{n+1} , A) .
\end{equation}
This fact is proven in \cite{Passi}, but let us give an argument. The augmentation ideal $I$ is generated additively by elements of the form $(m-1)$ with $m \in M$. Therefore $I^{n+1}$ is generated additively by elements of the form 
$$(m_0 - 1) \cdot \ldots \cdot (m_n -1) $$
with $m_i \in M$. A slightly bigger additive generating set for $I^{n+1}$ is then given by 
$$x \cdot  (m_0 - 1) \cdot \ldots \cdot (m_n -1) $$
with $m_i, x \in M$.
Using this fact we have to show that $f: M \to A$ is polynomial of degree at most $n$ precisely if $\overline{f}: \Z[M] \to A$ vanishes on these products for all $x,m_i \in M$. 
This follows from the following pair of observations:
\begin{itemize}
\item A map $f: M \to A$ is polynomial of degree at most $n$ precisely if for each sequence $m_0,...,m_n,x$ of elements in $M$ we have
$$
(D_{m_0}D_{m_1} ... D_{m_n} f)(x) = 0.
$$
\item There is an equality
$$
(D_{m_0}D_{m_1} ... D_{m_n} f)(x) = \overline{f}\Big(x \cdot (m_0 - 1) \cdot (m_1 - 1) \cdot \ldots \cdot (m_n -1)\Big).
$$
\end{itemize}
The first of these observations is the definition. The second observation follows inductively from the case $n =0$ which is obvious.
 
Now we can proceed to the proof of the theorem. By virtue of the natural bijection \eqref{eq_corep} it suffices to show that the map
$$
\Z[M]/ I^{n+1} \to \Z[M^+]/ I^{n+1}
$$
is an isomorphism of abelian groups. Both sides are actually rings and the map
in question is a map of rings. In order to construct an inverse ring map, it suffices to check that all elements $m \in M$
represent multiplicative units in $\Z[M]/ I^{n+1}$; 
however, this follows because $m-1$ is nilpotent. 
\end{proof}

The last result shows that group completion is universal with respect to
polynomial maps and not only for additive maps. 
From this, the  extended functoriality of $K_0$ readily follows, as in
\cite{Jou00}; we review the details below.

\begin{definition}[$K_0$ of additive $\infty$-categories]
For an additive $\infty$-category $\mathcal{A}$, the group $K_0(\calA)$ is the
group completion of the abelian monoid $\pi_0(\mathcal{A})$ of isomorphism classes of objects with
$\oplus$ as addition. Concretely $K_0(\calA)$ is the abelian group generated
from isomorphism classes of objects subject to the relation $[A] + [B] = [A
\oplus B]$. 
\end{definition}

Let $\Cat^\Add$ be the $\infty$-category of additive $\infty$-categories
and additive functors. Let $\mathrm{Ab}$ denote the ordinary category of abelian
groups. Then $K_0$ defines a functor $$
K_0: \Cat^{\Add} \to \mathrm{Ab}
$$

Let $\catap$ 
be the $\infty$-category of additive $\infty$-categories and polynomial functors
between them. 
The next result appears in the present form in \cite{Jou00} and (for modules
over a ring) in \cite{Dold72}.

\begin{proposition}[{\cite[Prop.~1.8]{Jou00}}]
\label{addK0func}
There is a functor $\widetilde{K}_0: \Cat^\Add_\poly \to \mathrm{Set}$ with a
transformation $\pi_0 \to \widetilde{K}_0$ such that the diagram
$$
\xymatrix{
\Cat^\Add\ar[d]  \ar[r]^-{K_0} & \Ab \ar[d] \\
\Cat^\Add_\poly \ar[r]^-{\widetilde{K}_0} & \mathrm{Set}
}
$$
commutes up to natural isomorphism. \end{proposition}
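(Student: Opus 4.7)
The plan is to construct $\widetilde{K}_0(F)$ for a polynomial functor $F\colon \mathcal{A} \to \mathcal{B}$ of degree $\leq n$ by producing first a set map $f_F\colon \pi_0(\mathcal{A}) \to K_0(\mathcal{B})$, showing it is polynomial of degree $\leq n$ in the sense of Passi (\Cref{degreenmapgroup}), and then invoking \Cref{thm_passi} to extend uniquely to a polynomial map $K_0(\mathcal{A}) \to K_0(\mathcal{B})$. The transformation $\pi_0 \to \widetilde{K}_0$ will be tautological since $\widetilde{K}_0(F)$ will by construction agree with $[X] \mapsto [F(X)]$ on the image of $\pi_0(\mathcal{A})$.

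The first and central step is to match the categorical notion of polynomial degree with Passi's monoid-theoretic one. Given $F\colon \mathcal{A} \to \mathcal{B}$ polynomial of degree $\leq n$, by the Eilenberg--MacLane definition there is a canonical decomposition $F(Y \oplus X) \simeq F(X) \oplus (D_Y F)(X)$ with $D_Y F$ polynomial of degree $\leq n-1$. Consequently, in $K_0(\mathcal{B})$ we have
$$
f_F([Y]+[X]) - f_F([X]) = [(D_Y F)(X)] = f_{D_Y F}([X]).
$$
An induction on $n$ (with the trivial base cases for constant and zero functors) shows that $f_F$ is polynomial of degree $\leq n$ in Passi's sense. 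Applying \Cref{thm_passi} yields a unique polynomial extension, which we declare to be $\widetilde{K}_0(F)$.

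Functoriality is then immediate from uniqueness: for composable polynomial functors $F$ and $G$ of degrees $\leq n$ and $\leq m$, the composite $G \circ F$ is polynomial of degree $\leq nm$, and both $\widetilde{K}_0(G) \circ \widetilde{K}_0(F)$ and $\widetilde{K}_0(G \circ F)$ are polynomial maps of degree $\leq nm$ agreeing on the image of $\pi_0(\mathcal{A})$, hence agree by the uniqueness clause of \Cref{thm_passi}. For the same reason, naturally isomorphic polynomial functors induce the same set map, so the assignment descends from the $\infty$-category $\catap$ to a $1$-functor into $\mathrm{Set}$. Commutativity of the square is automatic: if $F$ is genuinely additive, then $f_F\colon \pi_0(\mathcal{A}) \to K_0(\mathcal{B})$ is a monoid homomorphism, its unique polynomial extension is its group-completion, and this coincides with $K_0(F)$.

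The main obstacle is the identification in the first step: verifying that the Passi derivative $D_{[Y]}$ applied to $f_F$ equals $f_{D_Y F}$ requires carefully using that $F(X)$ is a complementary summand of $F(Y \oplus X)$ (which is why idempotent-completeness is relevant, cf.\ the footnote in \Cref{polynomialdef}'s additive counterpart) so that the categorical kernel computes the difference in $K_0$. Once this identification is in place, both the induction and the uniqueness-driven functoriality argument are formal.
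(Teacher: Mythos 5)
Your proposal is correct and follows exactly the paper's strategy: compose $\pi_0(\mathcal{A}) \to \pi_0(\mathcal{B}) \to K_0(\mathcal{B})$, check it is polynomial in Passi's sense via the Eilenberg--MacLane derivative matching the Passi derivative, and invoke \Cref{thm_passi} for the unique extension, with uniqueness then giving functoriality and the commuting square for free. You simply spell out the induction verifying polynomiality, which the paper leaves implicit.
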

\begin{proof}
 For a polynomial functor $F: \calA \to \calB$ the map $\pi_0(\calA) \to
 \pi_0(\calB) \to K_0(\calB)$ is polynomial. Thus by \Cref{thm_passi}  it
 can be uniquely extended to a polynomial map $K_0(\calA) \to K_0(\calB)$. This gives the
 desired maps, and it is easy to see that they define a functor
 $\widetilde{K}_0: \catap \to \mathrm{Set}$. 
\end{proof}

\subsection{Stable $\infty$-categories}

In this section, we extend the results of the previous section to show that $K_0$ is functorial in polynomial functors of
stable $\infty$-categories; the strategy of proof is similar to that of
\cite{Dold72}. Recall that for stable $\infty$-categories, one has
the following definition of $K_0$, which only depends on the underlying
triangulated homotopy category. 

\begin{definition} 
Given a stable $\infty$-category $\mathcal{C}$, we define the group
$K_0(\mathcal{C})$ as the quotient of $K_0^{\mathrm{add}}(\mathcal{C})$ (i.e.,
$K_0$ of the underlying additive $\infty$-category) by the
relations $[X] + [Z] = [Y]$ for cofiber sequences $X \to Y \to Z$ in
$\mathcal{C}$. 
\end{definition}

\begin{proposition} 
\label{K0stable}
Let $F: \mathcal{C} \to \mathcal{D}$ be a polynomial functor between the stable
$\infty$-categories $\mathcal{C}, \mathcal{D}$. Then 
there is a unique polynomial map $F_*: K_0(\mathcal{C}) \to K_0(\mathcal{D})$ 
such that $F_*([X]) = [F(X)]$ for $X \in \mathcal{C}$. 
\end{proposition}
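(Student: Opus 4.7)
The plan is to reduce to the additive case \Cref{addK0func} via a descent argument along $K_0^{\mathrm{add}} \twoheadrightarrow K_0$. Applying \Cref{addK0func} to the underlying additive $\infty$-categories $\ho(\mathcal{C})$ and $\ho(\mathcal{D})$ and to the additive polynomial functor $\ho(F)$, we obtain a canonical polynomial map $F_*^{\mathrm{add}} : K_0^{\mathrm{add}}(\mathcal{C}) \to K_0^{\mathrm{add}}(\mathcal{D})$ sending $[X]$ to $[F(X)]$. Composing with the quotient $K_0^{\mathrm{add}}(\mathcal{D}) \twoheadrightarrow K_0(\mathcal{D})$ yields a polynomial map $\widetilde{F}_* : K_0^{\mathrm{add}}(\mathcal{C}) \to K_0(\mathcal{D})$. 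Uniqueness of the asserted $F_*$ is immediate from \Cref{thm_passi}: a polynomial map out of $K_0(\mathcal{C})$ is determined by its restriction to the monoid of classes of objects, which is prescribed to be $X \mapsto [F(X)]$.

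The crux of the proof is to show that $\widetilde{F}_*$ factors through $K_0^{\mathrm{add}}(\mathcal{C}) \twoheadrightarrow K_0(\mathcal{C})$. Applying \Cref{thm_passi} once more, it is enough to check this invariance on classes of objects: for each object $A \in \mathcal{C}$ and each cofiber sequence $X \to Y \to Z$ in $\mathcal{C}$, we need $[F(A \oplus Y)] = [F(A \oplus X \oplus Z)]$ in $K_0(\mathcal{D})$. Direct-summing the identity on $A$ with the given cofiber sequence to produce $A \oplus X \to A \oplus Y \to Z$ absorbs the parameter $A$, reducing the problem to showing
\[ [F(Y)] = [F(X \oplus Z)] \quad \text{in } K_0(\mathcal{D}) \]
for every cofiber sequence $X \to Y \to Z$ in $\mathcal{C}$.

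To prove this identity I would use the stable Dold--Kan correspondence (\Cref{nonabelianDK}). The $2$-step filtered object $X \to Y$ corresponds to a $1$-skeletal simplicial object $X_\bullet \in \fun(\Delta^{\mathrm{op}}, \mathcal{C})$ with $|X_\bullet| = Y$, whose normalized chain complex in $\ho(\mathcal{C})$ is $\Sigma^{-1} Z \xrightarrow{\partial} X$, with $\partial$ the connecting map. The split filtered object $X \to X \oplus Z$ gives an analogous simplicial object $X'_\bullet$ with $|X'_\bullet| = X \oplus Z$ and chain complex $\Sigma^{-1} Z \xrightarrow{0} X$. Because Dold--Kan builds the simplicial object degreewise as a direct sum over nondegenerate simplices of the chain-complex terms, $X_\bullet$ and $X'_\bullet$ have canonically isomorphic underlying objects $X \oplus (\Sigma^{-1} Z)^{\oplus k}$ in each degree $k$, differing only in the face maps into which $\partial$ enters. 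Since $F$ is polynomial of degree $\leq n$, both $F(X_\bullet)$ and $F(X'_\bullet)$ are $n$-skeletal; since $F$ preserves finite geometric realizations, $F(Y) = |F(X_\bullet)|$ and $F(X \oplus Z) = |F(X'_\bullet)|$.

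The main obstacle is to verify that these two realizations carry the same class in $K_0(\mathcal{D})$. Since the class of a finite realization equals the alternating sum of its normalized simplicial pieces, I would use the cross-effect decomposition of $F$ on direct sums: $F(X \oplus (\Sigma^{-1} Z)^{\oplus k})$ splits naturally into a direct sum of cross-effect terms $\mathrm{cr}_j F$ applied to $X$ and to copies of $\Sigma^{-1} Z$, in a way that makes the Euler-characteristic computation in $K_0(\mathcal{D})$ only record the $K_0$-classes of the cross-effects and not the specific differentials entering the face maps. The cleanest implementation is probably an induction on the polynomial degree $n$: the cross-effects $\mathrm{cr}_j F$ for $j \geq 2$ are polynomial of strictly smaller degree in each variable, so the inductive hypothesis applied to them absorbs the contributions of the nonzero differential $\partial$ in the alternating sum, yielding the desired equality $[F(Y)] = [F(X \oplus Z)]$.
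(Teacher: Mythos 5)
Your proof follows the same road as the paper's. You correctly reduce via \Cref{addK0func} and a translation-invariance criterion for polynomial maps out of abelian monoids (the paper packages this carefully into \Cref{quotientpoly} and the proposition that follows it; your invocation of \Cref{thm_passi} together with absorbing the parameter $A$ amounts to the same thing) to showing $[F(Y)] = [F(X \oplus Z)]$ in $K_0(\mathcal{D})$ for each cofiber sequence $X \to Y \to Z$, and you use exactly the two $1$-skeletal simplicial objects the paper uses --- the \v{C}ech nerves of $X \to Y$ and of $X \xrightarrow{(\mathrm{id},0)} X \oplus Z$, i.e.\ the Dold--Kan objects for the chain complexes $\Sigma^{-1}Z \xrightarrow{\partial} X$ and $\Sigma^{-1}Z \xrightarrow{0} X$ --- which agree degreewise with all face maps $d_i$, $i \geq 1$, identified, differing only in $d_0$. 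The one place where you diverge, and where the write-up is genuinely incomplete, is the final step. You invoke a cross-effect decomposition of $F$ on direct sums together with an unspecified induction on polynomial degree to show that the Euler-characteristic computation in $K_0(\mathcal{D})$ is insensitive to $\partial$; as sketched, the inductive step (``the inductive hypothesis applied to the cross-effects absorbs the contributions of $\partial$'') is a plan rather than an argument, and it is in any case unnecessary. The clean observation, isolated in the paper as Lemma~\ref{K0identify}, is that the Dold--Kan normalized piece $N_k = \bigcap_{i \geq 1} \ker(d_i)$ depends only on the face maps $d_i$ with $i \geq 1$. Since $F$ is applied to the same $d_i$'s ($i \geq 1$) in both simplicial objects, the $n$-skeletal objects $F(X_\bullet)$ and $F(X'_\bullet)$ have literally identical normalized pieces, hence identical associated gradeds for the skeletal filtration, hence realizations with the same class in $K_0(\mathcal{D})$ --- with no cross-effect decomposition or induction on degree required.
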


We have already seen an analog of this result for $K_0^{\mathrm{add}}$, the
$K_0$ of the underlying additive $\infty$-category (\Cref{addK0func}). 
The obstruction is to understand the interaction with cofiber sequences. For
this, we will need the following construction, and a general lemma about
simplicial resolutions.

\begin{cons}
\label{cechnerve}
Let $\mathcal{C}$ be a stable $\infty$-category. 
Suppose given a cofiber sequence $X' \to X \to X''$ in $\mathcal{C}.$
Then we form the \v{C}ech nerve of the map $X \to X''$. This constructs a
1-skeletal
simplicial object $A_\bullet$ in $\mathcal{C}$ of the form
\[ 
\xymatrix{
\dots \ar@<1.5pt>[r]\ar@<-1.5pt>[r]\ar@<4.5pt>[r]\ar@<-4.5pt>[r]& X ' \oplus X' \oplus X \ar@<0pt>[r]\ar@<3pt>[r]\ar@<-3pt>[r] & X' \oplus X \ar@<1.5pt>[r]\ar@<-1.5pt>[r] & 
X 
} 
.
 \]
Alternatively, we can consider this simplicial object as the two-sided bar
construction of the abelian group object $X' \in \mathcal{C}$ acting on $X$
(via $X' \to X$). 
We observe that each of the terms in the simplicial object, and each of the
face maps $d_i, i \geq 1$ depend only on the objects $X', X$ (and not on the
map $X' \to X$). Also, $A_\bullet$ is augmented over $X''$ and is a resolution
of $X''$. 
\end{cons}

\begin{lemma} 
\label{K0identify}
Suppose $\mathcal{C}$ is a stable $\infty$-category and $X^1_\bullet,
X^2_\bullet \in \fun(\Delta^{op}, \mathcal{C})$ are two simplicial objects such
that: 
\begin{enumerate}
\item Both $X^1_\bullet, X^2_\bullet$ are $d$-skeletal for some $d$.  
\item We have an identification $X^1_n \simeq X^2_n$ for each $n$. 
\item Under the above identification, the face maps $d_i, i \geq 1$ for both
simplicial objects are homotopic. 
\end{enumerate}
Then $|X^1_\bullet|, |X^2_\bullet|$ define the same class in $K_0(\mathcal{C})$.
\end{lemma}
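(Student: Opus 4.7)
The plan is to apply the stable Dold--Kan correspondence of \Cref{nonabelianDK} to both simplicial objects, compute each class $[|X^j_\bullet|]$ in $K_0(\mathcal{C})$ from the resulting filtration, and then observe that the associated graded is determined entirely by the data $\bigl(X^j_n,\, d_i\ (i \geq 1)\bigr)$, which by hypothesis agree for $j=1,2$.

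First, for $j \in \{1,2\}$ pass through \Cref{nonabelianDK} to obtain a filtered object $Y^j_0 \to Y^j_1 \to Y^j_2 \to \cdots$ in $\mathcal{C}$ whose colimit computes $|X^j_\bullet|$. Since $X^j_\bullet$ is $d$-skeletal, Remark~\ref{nskeletalDK} implies that the transition maps $Y^j_i \to Y^j_{i+1}$ are equivalences for $i \geq d$, so $|X^j_\bullet| \simeq Y^j_d$. The cofiber sequences $Y^j_{i-1} \to Y^j_i \to Y^j_i/Y^j_{i-1}$ (with $Y^j_{-1} := 0$) then give, by a straightforward induction using the defining relation of $K_0$,
$$[\,|X^j_\bullet|\,] \;=\; \sum_{i=0}^{d} \bigl[\,Y^j_i / Y^j_{i-1}\,\bigr] \quad \text{in } K_0(\mathcal{C}).$$

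It then suffices to show $[Y^1_i/Y^1_{i-1}] = [Y^2_i/Y^2_{i-1}]$ for each $i$. For this I invoke the compatibility between the stable and classical Dold--Kan correspondences recorded in the ``Making Dold-Kan explicit'' remark: in the homotopy category $\ho(\mathcal{C})$, viewed as an idempotent-complete additive category, the associated graded satisfies $Y^j_i/Y^j_{i-1} \simeq \Sigma^i C^j_i$, where $C^j_i$ is the degree-$i$ term of the classical Dold--Kan normalization of the simplicial object $X^j_\bullet \in \fun(\Delta^{op}, \ho(\mathcal{C}))$. By the explicit formula $C^j_i = \bigcap_{k \geq 1} \ker(d_k)$ inside $X^j_i$, and as emphasized in the formulation of Dold--Kan, the object $C^j_i$ depends only on $X^j_i$ and on the face maps $d_k$ for $k \geq 1$. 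Hypotheses (2) and (3) ensure these agree in $\ho(\mathcal{C})$ for $j=1,2$; thus $C^1_i \simeq C^2_i$, hence $\Sigma^i C^1_i \simeq \Sigma^i C^2_i$, so the classes in $K_0(\mathcal{C})$ coincide. Summing over $i$ concludes the argument.

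The only substantive input is the identification of the associated graded of the stable Dold--Kan filtration with the classical normalization computed in $\ho(\mathcal{C})$, i.e.\ the compatibility cited above. Once this is in hand, the proof is essentially formal: everything reduces to the fact that the normalized chains $C_n$ see only the face maps $d_i$ with $i \geq 1$, together with the additivity of $[\,\cdot\,]$ on cofiber sequences.
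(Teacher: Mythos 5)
Your argument is correct and is precisely the paper's proof spelled out in full: the paper's one-sentence justification — "$X^1_\bullet, X^2_\bullet$ have finite filtrations whose associated gradeds are identified in view of the stable Dold-Kan correspondence" — is exactly your chain of reductions (finite skeletal filtration, additivity of $K_0$ on the cofiber sequences, and the identification $Y^j_i/Y^j_{i-1}\simeq \Sigma^i C^j_i$ in $\ho(\mathcal C)$ via the compatibility of stable and classical Dold--Kan, with $C^j_i$ depending only on the $d_k$, $k\ge 1$). No gaps; nothing different in approach.
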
 
\begin{proof} 
This follows from the fact that $X^1_\bullet, X^2_\bullet$ have finite filtrations
whose associated gradeds are identified in view of the stable Dold-Kan correspondence. 
\end{proof} 

\begin{proposition} 
\label{quotientpoly}
Let $f: A \to A'$ be a polynomial map between abelian groups. Let $M \subset
A$ be an abelian submonoid. Suppose that for $ m \in M$ and $x \in A$, we have
$f(x + m) = f(x)$. Then for any $m' $ belonging to the subgroup $M'$ generated by
$M$ and for any $x \in A$, we have $f(x + m') = f(x)$ and $f$ factors over $A/M'$.
\end{proposition}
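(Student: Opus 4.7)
The statement reduces to elementary abelian-group manipulations; the polynomial hypothesis on $f$ plays no role in the argument itself and is present only because the proposition will be applied to polynomial maps arising from $K_0$ and one wants the factored map to retain its original form. The key point is that, because $A$ is a group, translation-invariance of $f$ along a submonoid automatically promotes to translation-invariance along the subgroup it generates.

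The first step is to upgrade invariance under $M$ to invariance under $-M := \{-m : m \in M\}$. Given $x \in A$ and $m \in M$, I substitute $x - m \in A$ for $x$ in the hypothesis $f(x+m) = f(x)$, obtaining $f(x) = f((x-m) + m) = f(x-m)$. Thus $f$ is also invariant under the shifts $-m$ for all $m \in M$.

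Next, I use the description of the subgroup generated by a submonoid in an abelian group. Since $M$ is a submonoid of $A$, the set $M - M := \{m_1 - m_2 : m_1, m_2 \in M\}$ contains $M$, is closed under negation, and is closed under addition (using commutativity and the fact that $M$ is closed under $+$); since it is contained in any subgroup containing $M$, it coincides with the subgroup $M'$ generated by $M$. For an arbitrary $m' = m_1 - m_2 \in M'$ and any $x \in A$, I then compute
\[
f(x + m') \;=\; f\bigl((x - m_2) + m_1\bigr) \;=\; f(x - m_2) \;=\; f(x),
\]
where the second equality uses the hypothesis (invariance under $+m_1$) and the third uses the first step (invariance under $-m_2$).

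The factoring is then automatic: the formula $\bar f([x]) := f(x)$ gives a well-defined map of sets $\bar f: A/M' \to A'$ because $f$ is constant on $M'$-cosets by what was just proved, and by construction $f = \bar f \circ \pi$ for the quotient $\pi: A \to A/M'$. There is no real obstacle in the proof; the only thing to notice is that the step from $M$-invariance to $M'$-invariance, which would be subtle for a general monoid, is free here because $A$ is a group.
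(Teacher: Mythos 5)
Your proof is correct, and in fact it is more elementary than the paper's. The paper's argument fixes $x \in A$, forms the polynomial map $y \mapsto f(x+y) - f(x)$ on $A$, observes that it vanishes when restricted to $M$, and then invokes the uniqueness statement of Theorem~\ref{thm_passi} to conclude that the induced polynomial map on $M^+$ (hence on the image $M'$ of $M^+ \to A$) is also zero. Your argument instead observes that because the hypothesis $f(x+m)=f(x)$ is asserted for \emph{all} $x \in A$ (not merely $x \in M$), one may substitute $x - m$ for $x$ and obtain translation-invariance under $-m$ for free; together with the identification $M' = M - M$ this gives the result by a direct two-step substitution, with no appeal to Theorem~\ref{thm_passi} or to the polynomial hypothesis at all. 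Your remark that the polynomial hypothesis is not used is accurate for the proposition as stated; it is present in the statement because the proposition is applied to polynomial maps and because the paper's own proof goes through the uniqueness-of-polynomial-extensions machinery, but your route shows the conclusion holds for arbitrary maps $f$ satisfying the given invariance. The paper's route has the modest advantage of exhibiting the proposition as a corollary of the framework it has already set up; yours has the advantage of being self-contained and clarifying exactly which hypotheses carry the weight.
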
 
\begin{proof} 
Fix $x \in A$. Consider the polynomial map $A \to A'$ sending $y \mapsto f(x+
y) - f(x)$. Since this vanishes for $y \in M$, it vanishes on the image of $M^+
\to A$ and the result follows. 
\end{proof}

\begin{proposition} 
Let $f: M \to A$ be a polynomial map from an abelian monoid $M$ to an abelian
group $A$. Let $N \subset M \times M$ be a submonoid which contains the
diagonal. Suppose that for each
$(m_1, m_2) \in N$, we have $f(m_1) = f(m_2)$. Then the unique
polynomial extension $f^+ : M^+ \to A$ factors over the quotient of $M^+$ by
the subgroup generated by $\{m_1 - m_2\}_{ (m_1, m_2) \in N }$. 
\end{proposition}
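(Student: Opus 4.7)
The plan is to reduce the statement to \Cref{quotientpoly}. Let $K := \{m_1 - m_2 : (m_1, m_2) \in N\} \subset M^+$. Then $K$ is a submonoid of $M^+$: it contains $0 = 0 - 0$ since $N$, being a submonoid of $M \times M$, contains the identity $(0,0)$; and it is closed under addition because $N$ is. Moreover, the subgroup of $M^+$ generated by $K$ coincides tautologically with the subgroup generated by $\{m_1 - m_2 \mid (m_1, m_2) \in N\}$ appearing in the statement. Thus, to conclude the proof via \Cref{quotientpoly}, it suffices to verify that $f^+(x + k) = f^+(x)$ for every $x \in M^+$ and every $k \in K$.

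Equivalently, I would show that for each fixed $(m_1, m_2) \in N$, one has $f^+(x + m_1) = f^+(x + m_2)$ for all $x \in M^+$. Both sides are polynomial maps $M^+ \to A$ in the variable $x$: polynomial maps on abelian monoids are stable under translation by a fixed element, since for $g$ of degree $\leq n$ one has $g(\cdot + y) = g(\cdot) + (D_y g)(\cdot)$ with $D_y g$ of degree $\leq n-1$, so an induction on $n$ yields $g(\cdot + y)$ of degree $\leq n$. By the uniqueness clause of \Cref{thm_passi} (a polynomial map on $M^+$ of bounded degree is determined by its restriction to $M$), it therefore suffices to check the equality on $x \in M$.

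For such $x$, the diagonal hypothesis ensures $(x, x) \in N$, so $(x + m_1, x + m_2) = (x, x) + (m_1, m_2) \in N$ because $N$ is a submonoid, and the assumption then yields $f(x + m_1) = f(x + m_2)$, as required. Applying \Cref{quotientpoly} completes the argument.

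The only nontrivial point is the bootstrap from the identity on $M$ to the identity on $M^+$, which uses the uniqueness of polynomial extensions in \Cref{thm_passi} in an essential way; the remainder is bookkeeping with the monoid structures on $N$ and $M^+$, and no serious obstacle is anticipated.
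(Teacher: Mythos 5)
Your proof is correct and follows essentially the same route as the paper's: both reduce to showing $f^+(x+m_1)=f^+(x+m_2)$ for all $x\in M^+$, use the uniqueness of polynomial extensions to check it only on $x\in M$ (where the diagonal and submonoid hypotheses close the argument), and then invoke \Cref{quotientpoly}. You spell out a few steps the paper leaves implicit (that translates of polynomial maps are polynomial, and the explicit appeal to \Cref{thm_passi}), but the argument is the same.
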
 
\begin{proof} 
Note first that the collection 
$C = \{m_1 - m_2\}_{ (m_1, m_2) \in N } \subset M^+$ is a submonoid. 
We claim that for any $x \in M^+ $ and $c \in C$, we have $f^+(x) = f^+ ( x +
c)$.
Equivalently, for any $y \in M^+$, $f^+(y + m_1) = f^+(y + m_2)$. 
Since both are polynomial maps, it suffices to check this for $y \in M$, 
in which case it follows from our assumptions. 
Thus the function $f^+ : M^+ \to A$ is invariant under translations by elements
of $C$. Since $C$ is a monoid, it follows 
by Proposition~\ref{quotientpoly}
that $f^+$ is invariant under translations 
by elements of the subgroup generated by $C$. 
\end{proof}

\begin{proof}[Proof of \Cref{K0stable}] 
By Theorem~\ref{addK0func}, we have a natural polynomial map on additive $K$-theory 
\[K_0^{\ad}(\mathcal{C}) \xrightarrow{F_*^{\mathrm{add}}}  K_0^{\ad}(\mathcal{D})
,\]
such that 
$F^{\ad}_*([X]) = [F(X)]$ for $X \in \mathcal{C}$. 
It suffices to show the composite 
$K_0^{\ad}(\mathcal{C}) \xrightarrow{F_*^{\mathrm{add}}}  K_0^{\ad}(\mathcal{D})
\twoheadrightarrow K_0(\mathcal{D})$
factors through $K_0(\mathcal{C})$. 
To see this, recall that $K_0(\mathcal{C})$ is the quotient of
$K_0^{\ad}(\mathcal{C})$ (in turn the group completion of
$\pi_0(\mathcal{C})$) by the relations $[X]  = [X' \oplus X'']$ for each
cofiber sequence 
\begin{equation} \label{cofiber} X' \to X \to X'' .\end{equation}
The collection of such defines a submonoid of $\pi_0(\mathcal{C}) \times
\pi_0(\mathcal{C})$ containing the diagonal. 
To prove the assertion,  
 we need to show\footnote{Compare also \cite[Th.~A]{Jou00} for a related type
 of statement.} that if \eqref{cofiber} is a cofiber sequence
in $\mathcal{C}$, then 
\[ [F(X)] = [F(X' \oplus  X'')].  \]
To see this, we construct two simplicial objects $C^1_\bullet$ and
$C^2_\bullet$ as in Construction~\ref{cechnerve} such that: 
\begin{enumerate}
\item $C^1_\bullet, C^2_\bullet$ are identified in each degree $n$ with $X'
\oplus X''[-1]^{\oplus n}$ and the face maps $d_i, i
\geq 1$ are homotopic.  
\item We have $|C^1_\bullet| \simeq X' \oplus X''$ and $|C^2_\bullet| \simeq X$.
\item Both $C^1_\bullet, C^2_\bullet$ are 1-skeletal. 
\end{enumerate}
Namely, $C^1_\bullet$ is the \v{C}ech nerve of $X'  \xrightarrow{(\mathrm{id},
0)} X' \oplus X''$ while 
$C^2_\bullet$ is the \v{C}ech nerve of $X' \to X$. 
We then find that the simplicial objects $F(C^1_\bullet), F(C^2_\bullet)$ are
$n$-skeletal (if $F$ has degree $\leq n$) and the geometric realizations
are given by $F(X' \oplus X''), F(X)$ respectively. 
Moreover, $F(C^1_\bullet), F(C^2_\bullet)$ agree in each degree and the face maps
$d_i, i \geq 1$ are identified. 
By Lemma~\ref{K0identify}, it follows that their geometric realizations have the same class in $K_0$, as
desired. 
\end{proof} 

\section{The main result}
\subsection{The universal property of higher $K$-theory}

\newcommand{\catexk}{\mathrm{Cat}_{\infty, \kappa}^{\mathrm{perf}}}
\newcommand{\catexo}{\mathrm{Cat}_{\infty, \omega}^{\mathrm{perf}}}

Our first goal is to review the axiomatic approach to higher $K$-theory, and
its characterization. 
We will use the $K$-theory of stable
$\infty$-categories, as developed by \cite{BGT} and \cite{Baruniv}, following
ideas that go back to Waldhausen \cite{Wal85} and ultimately Quillen
\cite{Qui73a}. 

Throughout, we fix (for set-theoretic reasons) a regular cardinal $\kappa$. 
Recall that $\catex$ is compactly generated \cite[Cor.~4.25]{BGT}. 
Let $\catexk$ denote the subcategory of $\kappa$-compact objects.  

\newcommand{\catsm}{\mathrm{Cat}^{\mathrm{perf}}_{\mathrm{sm}}}
\newcommand{\funp}{\mathrm{Fun}^{\pi}}
\newcommand{\funpadd}{\mathrm{Fun}^{\pi}_{\mathrm{add}}}
\newcommand{\funadd}{\mathrm{Fun}^{\mathrm{add}}}
\newcommand{\thypk}{\mathrm{Thy}^{\mathrm{poly}}_{\kappa}}
\newcommand{\thypa}{\mathrm{Thy}^{\mathrm{poly,  \ add}}_{\kappa}}

\newcommand{\Sp}{\mathrm{Sp}}

\begin{definition}[Additive invariants] 
\begin{enumerate}
\item  
Let $\funp( \catexk, \mathcal{S})$ denote the $\infty$-category of finitely
product-preserving functors $\catexk \to \mathcal{S}$. 

\item
We say that $f \in\funp( \catexk, \mathcal{S})$ is \emph{additive} if $f$ is
grouplike\footnote{Note that for any $\mathcal{C} \in \catexk$,
$f(\mathcal{C})$ has the structure
of an $E_\infty$-monoid in $\mathcal{S}$ using the addition on $\mathcal{C}$;
the condition that $f$ is grouplike is that $f(\mathcal{C})$ is grouplike for
each $\mathcal{C}$.} and $f$ carries semiorthogonal decompositions in $\catexk$ to
products. 

\end{enumerate}
We let $\funpadd( \catexk, \mathcal{S}) \subset \funp(\catexk, \mathcal{S})$ be
the subcategory of additive invariants. 
This inclusion admits a left adjoint $(-)_{\mathrm{add}}$, called 
\emph{additivization}. 
\end{definition} 

The construction $\iota$ which carries $\mathcal{C} \in \catexk$ to its
underlying space (i.e., the nerve of the maximal sub $\infty$-groupoid) yields
an object of $\funp( \catexk, \mathcal{S})$. 
The construction of the algebraic $K$-theory space $K(-)$ 
yields an additive invariant, by Waldhausen's additivity theorem. 

\begin{theorem}[Compare \cite{BGT, Baruniv}] 
\label{univKtheory}
The $K$-theory functor $K: \catexk \to \mathcal{S}$ is the 
additivization of $\iota \in \funp( \catexk, \mathcal{S})$. 
\end{theorem}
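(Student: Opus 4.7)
The plan is to verify that $K \in \funpadd(\catexk, \mathcal{S})$ and then establish its initiality among additive invariants under $\iota$. That $K(\mathcal{C}) := \Omega |\iota S_\bullet \mathcal{C}|$ is grouplike is built into the definition (it is a loop space), and preservation of finite products is immediate from the compatibility of the Waldhausen $S_\bullet$-construction with products. The splitting of semiorthogonal decompositions is the content of Waldhausen's additivity theorem, established in the $\infty$-categorical setting in \cite{BGT, Baruniv}. The natural transformation $\iota \to K$ is the usual group completion map of the $E_\infty$-monoid $(\iota \mathcal{C}, \oplus)$.

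For the universal property, suppose given $F \in \funpadd(\catexk, \mathcal{S})$ together with a map $\alpha : \iota \to F$. The strategy is to compute $F(S_\bullet \mathcal{C})$ using additivity. Each $S_n \mathcal{C}$ admits an iterated semiorthogonal decomposition into $n$ copies of $\mathcal{C}$ (indexed by the cofiber functors), so additivity of $F$ gives natural equivalences
\[
F(S_n \mathcal{C}) \xrightarrow{\ \simeq\ } F(\mathcal{C})^n.
\]
These equivalences identify the simplicial space $F \circ S_\bullet(\mathcal{C})$ with the bar construction of the $E_\infty$-monoid $F(\mathcal{C})$; since $F(\mathcal{C})$ is grouplike, its geometric realization is $B F(\mathcal{C})$. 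Composing $\alpha$ levelwise and passing to realizations gives a natural map $|\iota S_\bullet \mathcal{C}| \to B F(\mathcal{C})$, and looping produces the desired factorization $K(\mathcal{C}) \to F(\mathcal{C})$ compatible with the maps from $\iota$.

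Uniqueness follows because any natural transformation $K \to F$ under $\iota$ between grouplike $E_\infty$-monoid valued functors is determined by its restriction to the generating $E_\infty$-monoid $\iota \mathcal{C}$, via the universal property of group completion. Combined with the explicit construction above, this shows that $K$ represents the additivization of $\iota$, yielding the theorem.

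The main obstacle is the additivity theorem itself, namely the claim that an invariant satisfying the stated grouplike and semiorthogonal-splitting properties is automatically sent by the $S_\bullet$-construction to a Segal object. This is exactly the content of the cited works; once it is in hand, the rest of the argument is a formal manipulation of Segal objects and loop-space adjunction.
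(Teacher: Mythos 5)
Your proposal takes a genuinely different route from the paper. The paper treats the case $\kappa=\aleph_0$ as a citation to \cite{BGT, Baruniv}, and spends its effort only on the reduction from a general uncountable $\kappa$ to $\aleph_0$ via a left Kan extension argument (using \cite[Prop.~5.5]{HSS17} to show the left Kan extension of an additive invariant along $\catexo\subset\catexk$ is still additive, and then playing the two universal properties against each other). You instead re-derive BGT's universal property from scratch and do not address the cardinality issue at all, which is actually the only thing this theorem's proof needs to add beyond the literature.

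There is also a genuine gap in the re-derivation. You assert that the transformation $\iota\to K$ is ``the usual group completion map of the $E_\infty$-monoid $(\iota\mathcal{C},\oplus)$.'' This is false: group completion of $\iota\mathcal{C}$ is $\Omega B^{\oplus}(\iota\mathcal{C})$, the direct-sum $K$-theory, whereas $K(\mathcal{C})=\Omega|\iota S_\bullet\mathcal{C}|$ uses the Waldhausen construction and imposes the additional relations coming from cofiber sequences. For $\mathcal{C}=\perf(k)$ with $k$ a field these disagree already on $\pi_0$: the group completion of $\pi_0(\iota\mathcal{C})$ is a free abelian group on $\mathbb{Z}$-many generators, while $K_0(\perf(k))=\mathbb{Z}$. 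Consequently your uniqueness argument (``any natural transformation $K\to F$ under $\iota$ \dots is determined \dots via the universal property of group completion'') does not go through, because the universal property of group completion only gives uniqueness of maps out of $\Omega B^{\oplus}(\iota\mathcal{C})$, not out of $K(\mathcal{C})$. Showing that $\iota\to K$ is a localization map for the additivization localization (which is precisely the content of the theorem) requires the harder corepresentability analysis carried out in \cite{BGT, Baruniv}; it is not a formal consequence of grouplikeness. Your existence step (identifying $F\circ S_\bullet(\mathcal{C})$ with the bar construction of $F(\mathcal{C})$ using additivity and realizing) is the correct idea, but the uniqueness step needs to be replaced by the actual argument from the cited works, at which point one is better off just citing them as the paper does and handling only the $\kappa$-reduction.
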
 

\begin{remark} 
As the results in \emph{loc.~cit.} are stated slightly differently (in
particular, $\kappa =\aleph_0$ is assumed), we briefly indicate how to deduce
the present form of 
\Cref{univKtheory}. 

To begin with, we reduce to the case $\kappa = \aleph_0$. 
Let $F = (\iota)_{\mathrm{add}}$ denote the additivization of $\iota$
considered as an object of $\fun^{\pi}(\catexk, \mathcal{S})$. 
We can also consider the additivization of $\iota$ 
considered as an object of 
$\fun^{\pi}(\catexo, \mathcal{S})$ and left Kan extend from $\catexo $ to
$\catexk$; we denote this by $F': \catexk \to \mathcal{S}$. 
By left Kan extension, we also have a map $\iota \to F'$ in $\fun^{\pi}(\catexk,
\mathcal{S})$. 

Now $F'$ is also an additive invariant, thanks to \cite[Prop.~5.5]{HSS17}. 
It follows that
we have maps in $\fun^{\pi}(\catexk, \mathcal{S})$ under $\iota$ from $F' \to F$ 
and $F \to F'$, using the universal properties. 
It follows easily (from the universal properties
in $\funp(\catexk,\mathcal{S})$ and $\funp(\catexo, \mathcal{S})$)
that the composites in both directions are the identity,
whence $F \simeq F'$. 

Thus, we may assume $\kappa = \aleph_0$ 
for the statement of \Cref{univKtheory}. For $\kappa = \aleph_0$, 
we have that $\funp_{\mathrm{add}}( \catexo, \mathcal{S})$ is the
$\infty$-category of $\mathrm{Sp}_{\geq 0}$-valued additive invariants in the
sense of \cite{BGT}, whence the result. 
\end{remark} 

We will also need a slight reformulation of the universal property, using a
variant of the definition of an additive invariant, which turns out to be
equivalent. 
In the following, we write $\mathrm{Fun}_{\mathrm{ex}}(-, -)$ denote the
$\infty$-category of exact functors between two stable $\infty$-categories. 
\begin{definition}[Universal $K$-equivalences] 
A functor $F: \mathcal{C} \to \mathcal{D}$ in $\catst$ is said to be a
\emph{universal $K$-equivalence} if there exists a functor $G: \mathcal{D} \to
\mathcal{C}$ such that 
\begin{equation} \label{Kequivdef} [G \circ F] = [\mathrm{id}_{\mathcal{C}}] \in
K_0 ( \fun_{\mathrm{ex}}(\mathcal{C},
\mathcal{C}))
, \quad [F \circ G] = 
 [\mathrm{id}_{\mathcal{D}}] \in K_0 ( \fun_{\mathrm{ex}}(\mathcal{D},
\mathcal{D}))
.\end{equation}
Equivalently, this holds if and only if for every $\mathcal{E}
\in \catst$, the natural map $\mathrm{Fun}_{\mathrm{ex}}(\mathcal{D},
\mathcal{E}) \to \mathrm{Fun}_{\mathrm{ex}}(\mathcal{C}, \mathcal{E})$ induces
an isomorphism on $K_0$. 
\end{definition} 
\begin{example} 
The shear map $\mathcal{C} \times \mathcal{C} \to \mathcal{C} \times
\mathcal{C}$, i.e., the functor $(X, Y) \mapsto (X \oplus Y, Y)$,
is a universal $K$-equivalence. 
If $\mathcal{C}$ admits a semiorthogonal decomposition into subcategories
$\mathcal{C}_1, \mathcal{C}_2$, then the projection $\mathcal{C} \to
\mathcal{C}_1 \times \mathcal{C}_2$ is a universal $K$-equivalence. 
\end{example}

\begin{proposition} 
A functor in $\funp( \catexk, \mathcal{S})$ is additive if and only if it
carries universal $K$-equivalences to equivalences. 
\end{proposition}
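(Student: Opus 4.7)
The plan is to argue the two implications separately. The forward direction ($f$ carries universal $K$-equivalences to equivalences $\Rightarrow$ $f$ is additive) is immediate from the two examples singled out just before the statement. Applying $f$, which is in $\funp(\catexk,\mathcal{S})$ and hence preserves finite products, to the shear map on $\mathcal{C}\times\mathcal{C}$ produces the shear map of the $E_\infty$-monoid $f(\mathcal{C})$ on itself; this is an equivalence iff $f(\mathcal{C})$ is grouplike. Similarly, applying $f$ to the projection $\mathcal{C}\to\mathcal{C}_1\times\mathcal{C}_2$ associated to a semiorthogonal decomposition yields the requirement that $f(\mathcal{C})\xrightarrow{\sim} f(\mathcal{C}_1)\times f(\mathcal{C}_2)$, i.e.\ that $f$ splits semiorthogonal decompositions.

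For the reverse direction, suppose $f$ is additive and let $F\colon\mathcal{C}\to\mathcal{D}$ be a universal $K$-equivalence, witnessed by $G\colon\mathcal{D}\to\mathcal{C}$ satisfying \eqref{Kequivdef}. The plan is to produce an inverse to $f(F)$, which should be $f(G)$. To this end, for each $\mathcal{C}\in\catexk$ define
\[
 g_{\mathcal{C}}\colon\catexk\to\mathcal{S},\qquad g_{\mathcal{C}}(\mathcal{D}) := \mathrm{Map}_{\mathcal{S}}(f(\mathcal{C}),f(\mathcal{D})).
\]
A direct check shows $g_{\mathcal{C}}$ is itself an additive invariant: it preserves finite products because $f$ does, it splits semiorthogonal decompositions for the same reason, and it is grouplike because $f(\mathcal{D})$ is. There is a tautological evaluation transformation in $\funp(\catexk,\mathcal{S})$,
\[
 \iota\circ\fun_{\mathrm{ex}}(\mathcal{C},-)\longrightarrow g_{\mathcal{C}},
\]
sending an exact functor $H\colon\mathcal{C}\to\mathcal{D}$ to $f(H)\colon f(\mathcal{C})\to f(\mathcal{D})$.

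The next step is to identify the additivization of the source with $K\circ\fun_{\mathrm{ex}}(\mathcal{C},-)$. The functor $\fun_{\mathrm{ex}}(\mathcal{C},-)$ sends finite products to finite products and semiorthogonal decompositions of $\mathcal{D}$ to semiorthogonal decompositions of the functor category; combined with Waldhausen additivity (\Cref{univKtheory}), this shows that $K\circ\fun_{\mathrm{ex}}(\mathcal{C},-)$ is additive and that the natural map from $\iota\circ\fun_{\mathrm{ex}}(\mathcal{C},-)$ exhibits it as the additivization. Since $g_{\mathcal{C}}$ is additive, the evaluation transformation extends uniquely to $K(\fun_{\mathrm{ex}}(\mathcal{C},-))\to g_{\mathcal{C}}$. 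Passing to $\pi_0$ gives a map of abelian groups $K_0(\fun_{\mathrm{ex}}(\mathcal{C},\mathcal{D}))\to [f(\mathcal{C}),f(\mathcal{D})]$ characterized by $[H]\mapsto [f(H)]$. Applied to \eqref{Kequivdef}, the relation $[G\circ F]=[\mathrm{id}_{\mathcal{C}}]$ in $K_0(\fun_{\mathrm{ex}}(\mathcal{C},\mathcal{C}))$ becomes $f(G)\circ f(F)\simeq \mathrm{id}_{f(\mathcal{C})}$ in $\mathcal{S}$, and symmetrically on the other side; hence $f(F)$ is an equivalence with inverse $f(G)$.

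The main obstacle is the identification in the previous paragraph: the universal property of $K$ as given in \Cref{univKtheory} is stated on $\catexk$, but $\fun_{\mathrm{ex}}(\mathcal{C},\mathcal{D})$ need not lie in $\catexk$. This is handled by enlarging $\kappa$ (which is harmless thanks to the $\kappa$-independence discussed in the remark after \Cref{univKtheory}), and then invoking the universal property of $K$-theory on the appropriate subcategory through which $\fun_{\mathrm{ex}}(\mathcal{C},-)$ factors. Everything else reduces to checking compatibility of the evaluation with the additive structure, which is automatic by functoriality of $f$.
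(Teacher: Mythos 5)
Your forward direction matches the paper exactly. For the reverse direction, you and the paper both land on the same key fact — a map
\[
K_0\bigl(\fun_{\mathrm{ex}}(\mathcal{C},\mathcal{D})\bigr)\longrightarrow \pi_0\,\mathrm{Map}\bigl(f(\mathcal{C}),f(\mathcal{D})\bigr),\qquad [H]\mapsto [f(H)],
\]
after which applying \eqref{Kequivdef} closes the argument identically. But you reach that map by a substantially heavier route than the paper, and the crucial step in your route is not actually established by what you wrote.

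The gap is in the sentence claiming that the natural map $\iota\circ\fun_{\mathrm{ex}}(\mathcal{C},-)\to K\circ\fun_{\mathrm{ex}}(\mathcal{C},-)$ ``exhibits it as the additivization.'' The preceding observation — that $\fun_{\mathrm{ex}}(\mathcal{C},-)$ preserves finite products and semiorthogonal decompositions, and that $K$ is additive — only shows that the \emph{target} $K\circ\fun_{\mathrm{ex}}(\mathcal{C},-)$ is an additive invariant. It says nothing about whether the map from $\iota\circ\fun_{\mathrm{ex}}(\mathcal{C},-)$ is a \emph{local equivalence}, i.e.\ inverted by $(-)_{\mathrm{add}}$. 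In general, precomposition with a product- and semiorthogonal-decomposition-preserving functor takes additive invariants to additive invariants, but does not automatically commute with the additivization left adjoint; proving that it does here is, unwound, precisely the corepresentability theorem of Blumberg–Gepner–Tabuada, i.e.\ $\mathrm{Map}\bigl(\mathcal{U}_{\mathrm{add}}(\mathcal{C}),\mathcal{U}_{\mathrm{add}}(\mathcal{D})\bigr)\simeq K\bigl(\fun_{\mathrm{ex}}(\mathcal{C},\mathcal{D})\bigr)$. That is a real theorem, not a formal consequence, and your sketch does not supply it. (You could repair the argument by citing it explicitly, or by running the strategy of the remark after \Cref{univKtheory} together with \cite[Prop.~5.5]{HSS17}; but as written there is a hole.)

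The paper's proof sidesteps this entirely by working only at the level of $\pi_0$. Since $f$ is grouplike, it lifts to $\Sp_{\geq 0}$, so $\pi_0\mathrm{Map}(f(\mathcal{C}),f(\mathcal{D}))$ is already an abelian group and the monoid map from $\pi_0\bigl(\fun_{\mathrm{ex}}(\mathcal{C},\mathcal{D})^{\simeq}\bigr)$ extends over the group completion for free. The cofiber-sequence relation in $K_0$ is then imposed by applying the additivity of $f$ to the semiorthogonal decomposition of $\mathcal{D}^{\Delta^1}$ (the category of arrows in $\mathcal{D}$). This is all one needs — no space-level universal property, no corepresentability, no enlargement of $\kappa$. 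Your construction of $g_{\mathcal{C}}$ and the appeal to the full universal property of $K$ aims to produce a map on the level of spaces, which is strictly more than the $\pi_0$ statement required; if you want to keep that approach you should recognize it as invoking corepresentability and cite it, rather than deriving it from additivity of the composite alone.
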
 
\begin{proof} 
By the above examples, any object in 
$\funp( \catexk, \mathcal{S})$ which preserves universal $K$-equivalences is necessarily additive. 
Thus, it remains only to show that an additive invariant carries universal 
$K$-equivalences to equivalences. 
Note that an additive invariant $f: \catexk \to \mathcal{S}$ naturally lifts to
$\mathrm{Sp}_{\geq 0}$, since
it is grouplike. 
Moreover, given 
$\mathcal{C}, \mathcal{D} \in \catexk$, the map obtained by applying $f$,
\[ \pi_0( \fun_{\mathrm{ex}}(\mathcal{C}, \mathcal{D})^{\simeq}) \xrightarrow{f} 
\pi_0\mathrm{Hom}_{\mathrm{Sp}_{\geq 0}}( f(\mathcal{C}), f(\mathcal{D}))
\]
has the property that it factors through $K_0 (
\mathrm{Fun}_{\mathrm{ex}}(\mathcal{C}, \mathcal{D}))$: indeed, this follows
using additivity for $\mathcal{D}^{\Delta^1}$. 
This easily shows that $f$ sends universal $K$-equivalences to equivalences. 
\end{proof} 

We thus obtain the following result, showing that additivization is the
Bousfield localization at the universal $K$-equivalences. 

\begin{corollary} 
\label{univKislocal}
$\funp_{\mathrm{add}}( \catexk,\mathcal{S})$ is the Bousfield localization of
$\funp( \catexk, \mathcal{S})$ at the class of maps in $\catexk$ (via the
Yoneda embedding) which are universal $K$-equivalences.  \qed
\end{corollary}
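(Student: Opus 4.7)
The plan is to deduce this corollary essentially formally from the preceding proposition together with standard facts about Bousfield localization of presheaf $\infty$-categories. The preceding proposition characterizes additive invariants as precisely those $F \in \funp(\catexk, \mathcal{S})$ that carry universal $K$-equivalences to equivalences, so the corollary amounts to saying that ``$F$ is local with respect to the Yoneda image of a universal $K$-equivalence $\mathcal{C} \to \mathcal{D}$'' is the same condition as ``$F(\mathcal{C}) \to F(\mathcal{D})$ is an equivalence.''

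First I would recall that $\funp(\catexk, \mathcal{S})$ is a reflective subcategory of $\fun(\catexk, \mathcal{S}) = \mathcal{P}(\catexk)$ (the finitely product-preserving presheaves), and that the Yoneda embedding $\catexk \hookrightarrow \funp(\catexk, \mathcal{S})$ corepresents evaluation: for any $F \in \funp(\catexk, \mathcal{S})$ and $\mathcal{C} \in \catexk$, we have $\mathrm{Map}(h_{\mathcal{C}}, F) \simeq F(\mathcal{C})$. Consequently, if $u: \mathcal{C} \to \mathcal{D}$ is a morphism in $\catexk$, then $F$ is $h_u$-local (meaning $\mathrm{Map}(h_{\mathcal{D}}, F) \to \mathrm{Map}(h_{\mathcal{C}}, F)$ is an equivalence) if and only if $F(u): F(\mathcal{C}) \to F(\mathcal{D})$ is an equivalence. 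Applying this observation to the class $W$ of universal $K$-equivalences and invoking the preceding proposition identifies the $W$-local objects in $\funp(\catexk, \mathcal{S})$ with the full subcategory $\funpadd(\catexk, \mathcal{S})$.

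To conclude that this identification actually presents $\funpadd(\catexk, \mathcal{S})$ as the Bousfield localization at $W$, I would just need that the inclusion $\funpadd(\catexk, \mathcal{S}) \hookrightarrow \funp(\catexk, \mathcal{S})$ admits a left adjoint, which was built into the definition of additivization $(-)_{\mathrm{add}}$. This verifies the two conditions characterizing a Bousfield localization: the local objects are exactly the additive invariants, and the localization functor exists.

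No step here is a real obstacle; the content has already been done in the preceding proposition and in \Cref{univKtheory}. The only small thing to be careful about is the direction of arrows: the corollary is stated in terms of maps in $\catexk$ rather than maps in $\funp(\catexk, \mathcal{S})$, so one has to be explicit that ``via the Yoneda embedding'' means we are localizing at the set $\{h_u : u \in W\}$ and that the Yoneda embedding is (contravariantly) compatible with evaluation, so that $h_u$ being inverted corresponds exactly to $F(u)$ being an equivalence.
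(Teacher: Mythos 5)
Your argument is correct and is precisely the deduction the paper has in mind when it ends the corollary with an immediate $\qed$: the preceding proposition identifies additive invariants with those functors inverting universal $K$-equivalences, the co-Yoneda lemma identifies "$h_u$-local" with "$F(u)$ an equivalence," and the existence of the left adjoint $(-)_{\mathrm{add}}$ (together with $\catexk$ being essentially small, so the generating class is a set) gives the localization. The only blemish is a small orientation slip: for $u\colon \mathcal{C}\to\mathcal{D}$ the induced map of corepresentables goes $h_{\mathcal{D}}\to h_{\mathcal{C}}$, so $h_u$-locality is the assertion that $\operatorname{Map}(h_{\mathcal{C}},F)\to\operatorname{Map}(h_{\mathcal{D}},F)$, i.e.\ $F(\mathcal{C})\to F(\mathcal{D})$, is an equivalence; you wrote the mapping-space arrow the other way but reached the correct conclusion.
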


\subsection{The universal property with polynomial functors}

In this subsection, we formulate the main technical result 
(Theorem~\ref{mainpolythm}) of the paper, which controls the additivization of a theory
functorial in polynomial functors. Throughout, we fix a regular
\emph{uncountable} cardinal
$\kappa$.

\begin{definition} 
 We let $\catpk$ denote the $\infty$-category whose objects are $\kappa$-compact
 idempotent-complete,
 stable $\infty$-categories
 and whose morphisms are polynomial functors between them. 

We consider the $\infty$-category 
$\funp( \catpk, \mathcal{S})$ of functors $\catpk \to \mathcal{S}$ which
preserve finite products. 
We say that an object $T \in \funp( \catpk, \mathcal{S})$ is \emph{additive} if
its restriction to $\catexk$ is additive. 
We let $\funp_{\mathrm{add}}(\catpk, \mathcal{S}) \subset \funp( \catpk,
\mathcal{S})$ denote the subcategory of additive objects. 
This inclusion admits a left adjoint $(-)_{\mathrm{addp}}$, called
\emph{polynomial additivization.}
\end{definition}

As an example, the underlying $\infty$-groupoid functor still defines 
a functor $\iota: \catpk \to \mathcal{S}$ which preserves finite products, and
hence an object of $\funp(\catpk, \mathcal{S})$. 
We now state the main technical result, which states that 
the polynomial additivization recovers 
the additivization when restricted to $\catexk$. 
This will be proved below in section~\ref{proofmainthm}.

\begin{theorem} 
\label{mainpolythm}
Let $T \in \funp( \catpk, \mathcal{S})$ and let $T_{\mathrm{addp}}$ denote its
polynomial additivization. 
Then the map $T \to T_{\mathrm{addp}}$, when restricted to $\catexk$, exhibits 
the restriction 
$T_{\mathrm{addp}}|_{\catexk}$ as the additivization of $T|_{\catexk}$. 
\end{theorem}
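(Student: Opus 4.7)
The plan is to verify the universal property of additivization directly. Since $T_{\mathrm{addp}}|_{\catexk}$ is additive by the very definition of $\funp_{\mathrm{add}}(\catpk, \mathcal{S})$, the content is to show that for every $F \in \funp_{\mathrm{add}}(\catexk, \mathcal{S})$, the map
\[
\mathrm{Map}(T_{\mathrm{addp}}|_{\catexk}, F) \to \mathrm{Map}(T|_{\catexk}, F)
\]
induced by the unit $T \to T_{\mathrm{addp}}$ is an equivalence.

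The strategy is to construct, for each additive $F$, a canonical extension $\widetilde{F} \in \funp(\catpk, \mathcal{S})$ with $\widetilde{F}|_{\catexk} \simeq F$; concretely, a right adjoint $R$ to the restriction functor $r: \funp(\catpk, \mathcal{S}) \to \funp(\catexk, \mathcal{S})$ whose counit $r R F \to F$ is an equivalence on additive $F$. Since $\widetilde{F}|_{\catexk} = F$ is then additive, $\widetilde{F}$ lies in $\funp_{\mathrm{add}}(\catpk, \mathcal{S})$, and the chain
\[
\mathrm{Map}(T|_{\catexk}, F) \simeq \mathrm{Map}(T, \widetilde{F}) \simeq \mathrm{Map}(T_{\mathrm{addp}}, \widetilde{F}) \simeq \mathrm{Map}(T_{\mathrm{addp}}|_{\catexk}, F)
\]
(adjunction at the ends, universal property of $(-)_{\mathrm{addp}}$ in the middle) yields the conclusion.

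The natural candidate for $R$ is right Kan extension along $i: \catexk \hookrightarrow \catpk$. Since $i$ preserves finite products, so does $R F$, and we do land in $\funp(\catpk, \mathcal{S})$. The main obstacle, and the core content of the theorem, is verifying that the counit $r R F \to F$ is an equivalence for additive $F$. This is \emph{not} formal, because $i$ is not a full subcategory inclusion: the hom-space $\mathrm{Map}_{\catpk}(\mathcal{C}, \mathcal{D})$ genuinely contains polynomial functors that are not exact, and the right Kan extension a priori averages $F$ over all of these. The substance of the theorem is that, at the level of additive invariants, these extra polynomial morphisms contribute nothing new.

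To carry this out I would use two inputs from the body of the paper: first, the $K_0$-level extension \Cref{K0stable}, which is underpinned by the Joukhovitski--Passi theorem \Cref{thm_passi} asserting that polynomial maps from a monoid to an abelian group extend uniquely over group completion; second, the polynomial functoriality of the Waldhausen $S_\bullet$-construction, available because polynomial functors of degree $\leq d$ preserve $n$-skeletality up to factor $d$. Combining these should reduce the counit check at a given stable $\infty$-category to a polynomial-map statement on the homotopy groups of an additive invariant, which then follows from \Cref{thm_passi} applied levelwise. A parallel route, possibly cleaner, is via \Cref{univKislocal}: present both $\funp_{\mathrm{add}}(\catexk, \mathcal{S})$ and $\funp_{\mathrm{add}}(\catpk, \mathcal{S})$ as Bousfield localizations at universal $K$-equivalences and verify that restriction intertwines the two localizations, which reduces to checking that enlarging to polynomial morphisms creates no new universal $K$-equivalences.
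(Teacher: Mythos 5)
Your framework is in the right spirit, and your second, ``parallel route'' is essentially the paper's route: both additivizations are Bousfield localizations at (maps generated by) universal $K$-equivalences, and the theorem amounts to showing that restriction along $\catexk \hookrightarrow \catpk$ intertwines the two localizations (the paper packages this as \Cref{generalcorloc}). You also correctly identify \Cref{thm_passi} and \Cref{K0stable} as the ultimate source of the nontrivial content. However, there is a genuine gap at precisely the step you acknowledge is ``the substance of the theorem,'' and it afflicts both of your proposed routes.

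The condition that must actually be checked is: for a universal $K$-equivalence $\mathcal{C} \to \mathcal{D}$ in $\catexk$, the map $\mathrm{Hom}_{\catpk}(\mathcal{D},-) \to \mathrm{Hom}_{\catpk}(\mathcal{C},-)$, after restriction to $\catexk$, becomes an equivalence upon additivization. You propose to reduce this to \Cref{thm_passi} ``applied levelwise'' using polynomial functoriality of the $S_\bullet$-construction, but the paper explicitly observes that this cannot work: the maps induced by polynomial functors on $K$-theory spaces are not loop maps and hence are not induced by maps of $S_\bullet$-constructions. The missing mechanism is the corepresenting object $\Gamma_n\mathcal{C} \in \catexk$ for degree-$\leq n$ functors, satisfying $\fun_{\mathrm{ex}}(\Gamma_n\mathcal{C},-)\simeq\fun_{\leq n}(\mathcal{C},-)$, together with \Cref{gammanunivK}: $\Gamma_n$ carries universal $K$-equivalences to universal $K$-equivalences (this is where \Cref{K0stable} actually enters). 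Writing $\mathrm{Hom}_{\catpk}(\mathcal{C},-)\simeq\varinjlim_n\mathrm{Hom}_{\catexk}(\Gamma_n\mathcal{C},-)$ then reduces the check to the exact-functor case, and one is done by \Cref{univKislocal}. Without $\Gamma_n$, your primary route is stuck as well: the right Kan extension $RF$ at $\mathcal{C}$ is a limit over the comma category of polynomial functors out of $\mathcal{C}$ into $\catexk$, which has no obvious initial object (the identity is only initial in the exact comma category, since a map in the comma category is exact), so there is no way to show the counit $rRF\to F$ is an equivalence---or even that $rRF$ is additive---without the same device. In short: the localization-theoretic skeleton is right, but the load-bearing lemma (\Cref{gammanunivK}, powered by $\Gamma_n$) is absent.
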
 

A direct consequence of the theorem is a sort of converse: 
given any map $T \to T'$ such that the restricted transformation
exhibits $T'|_{\catexk}$ as the additivization of  $T|_{\catexk}$, then $T'$ is already the polynomial additivization. To see this simply note that $T'$ 
 is additive since this is only a condition on the restricted functor. Thus we get a map  $T_{\mathrm{addp}} \to T'$ which is, by the theorem, an equivalence when restricted to $\catexk$ and therefore an equivalence. 

Taking $T =\iota$ and using the universal property of $K$-theory (as in
\Cref{univKtheory}), we obtain the polynomial functoriality of $K$-theory
(\Cref{mainthm} from the introduction). 

\begin{corollary} 
There is a (unique) functor $\widetilde{K}: \catpk \to \mathcal{S}$ together with
a map $\iota \to \widetilde{K}$ in $\funp( \catpk, \mathcal{S})$, such that
the underlying map $\iota|_{\catexk} \to 
\widetilde{K}|_{\catexk}$ identifies $\widetilde{K}|_{\catexk}$ with $K$.
Moreover, $\widetilde{K}$ is
the polynomial additivization of $\iota$. 
\qed
\end{corollary}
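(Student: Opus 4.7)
The plan is to define $\widetilde{K}$ directly as $\iota_{\mathrm{addp}}$, the polynomial additivization of $\iota \in \funp(\catpk, \mathcal{S})$, and then verify the two required properties. For existence, the tautological unit map $\iota \to \iota_{\mathrm{addp}}$ plays the role of $\iota \to \widetilde{K}$. Restricting this map to $\catexk$, Theorem \ref{mainpolythm} identifies $\iota_{\mathrm{addp}}|_{\catexk}$ with the additivization of $\iota|_{\catexk}$ in $\funp(\catexk, \mathcal{S})$. By Theorem \ref{univKtheory}, this additivization is precisely the $K$-theory functor $K$, so the restricted map $\iota|_{\catexk} \to \widetilde{K}|_{\catexk}$ agrees with the canonical map $\iota|_{\catexk} \to K$. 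This already produces a functor with the required properties.

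For uniqueness, I would invoke the converse remark stated immediately after Theorem \ref{mainpolythm}. Suppose $\widetilde{K}'$ is any other candidate with a map $\iota \to \widetilde{K}'$ whose restriction to $\catexk$ is $\iota|_{\catexk} \to K$. Then $\widetilde{K}'$ is additive in the sense of $\funp(\catpk, \mathcal{S})$, since additivity is tested only on the restriction to $\catexk$ and $K$ is additive by Waldhausen's theorem. Moreover, the hypothesis says exactly that the restriction of $\iota \to \widetilde{K}'$ exhibits $\widetilde{K}'|_{\catexk}$ as the additivization of $\iota|_{\catexk}$. The converse consequence of Theorem \ref{mainpolythm} then forces $\widetilde{K}' \simeq \iota_{\mathrm{addp}}$ canonically under $\iota$.

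I do not anticipate any real obstacle here: the entire corollary is a formal combination of Theorem \ref{mainpolythm} (which carries all the substance, since it is the result that the polynomial additivization of any $T \in \funp(\catpk, \mathcal{S})$ restricts to the ordinary additivization of $T|_{\catexk}$) and the recognition theorem \ref{univKtheory} for $K$-theory as the additivization of $\iota$. The only mildly subtle point is making sure the map $\iota \to \widetilde{K}$ is specified as part of the data, so that uniqueness is a well-posed statement; this is handled by interpreting the corollary as computing a terminal object in the appropriate comma category under $\iota|_{\catexk} \to K$, which is automatic from the adjunction defining $(-)_{\mathrm{addp}}$.
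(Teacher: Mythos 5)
Your proof is correct and follows the paper's own route exactly: define $\widetilde{K}$ as $\iota_{\mathrm{addp}}$, use Theorem~\ref{mainpolythm} plus the universal property of $K$-theory (Theorem~\ref{univKtheory}) to identify the restriction to $\catexk$ with $K$, and obtain uniqueness from the converse remark following Theorem~\ref{mainpolythm}. The only small point worth making explicit, which both you and the paper leave implicit, is that the induced map $\iota_{\mathrm{addp}} \to \widetilde{K}'$ is an equivalence because $\catpk$ and $\catexk$ have the same objects, so an objectwise equivalence is detected after restriction to $\catexk$.
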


\begin{remark} 
Proving such a result directly (e.g., by examining the $S_\bullet$-construction)
seems to be difficult. In fact, since the maps on $K$-theory spaces induced by polynomial functors are in general not loop maps they cannot be induced by maps of the respective $S_\bullet$-constructions. 
\end{remark}

\subsection{Generalities on Bousfield localizations}
We need
some preliminaries about 
strongly saturated collections, cf. \cite[Sec. 5.5.4]{Lur09}. 
\begin{definition} 
Let $\mathcal{E}$ be a presentable $\infty$-category. A \emph{strongly
saturated} class of maps  is a full subcategory of $\fun(\Delta^1, \mathcal{E})$
which is closed under colimits, base-changes, and compositions. 
\end{definition} 

\begin{cons}[Strongly saturated classes correspond to Bousfield localizations]
Given a set of maps in $\mathcal{E}$,  they generate a smallest strongly
saturated class. A strongly saturated class arising in this way is said
to be of \emph{small generation.} 

The class of maps in $\mathcal{E}$ that map to
equivalences under a Bousfield localization $\mathcal{E} \to \mathcal{E}'$ of
presentable $\infty$-categories is strongly saturated and of small
generation, and this in fact establishes a correspondence between accessible
localizations and strongly saturated classes of small generation \cite[Props.
5.5.4.15-16]{Lur09}. 
Specifically, given a set $S$ of maps, the Bousfield localization corresponding
to the strongly saturated class generating is the Bousfield localization whose
image consists of the $S$-local objects. 
To summarize, given a presentable $\infty$-category $\mathcal{E}$, we have a
correspondence between the following collections: 
\begin{itemize}
\item  Presentable $\infty$-categories $\mathcal{E}'$, equipped with fully
faithful right adjoints $\mathcal{E}' \to \mathcal{E}$ (so the left adjoint is
a localization functor). 
\item Strongly saturated classes of maps in $\mathcal{E}$ which are of small
generation. 
\item Accessible localization functors
$L: \mathcal{E} \to \mathcal{E}$. 
\end{itemize}
\end{cons}

\begin{proposition} 
\label{preserveclass}
Let $\mathcal{C}$ be a presentable $\infty$-category which is given as the
nonabelian derived $\infty$-category of a subcategory $\mathcal{C}_0\subset
\mathcal{C}$ closed under finite coproducts. Let $S$ be the strongly saturated
collection of maps in $\mathcal{C}$ generated by a subset $S_0 \subset
\fun(\Delta^1, \mathcal{C}_0)$. Suppose that
$S_0$ is closed under finite coproducts and contains the identity maps. Let $F: \mathcal{C} \to \mathcal{D}$ be
a functor which preserves sifted colimits and let $V$ be a strongly saturated
class in $\mathcal{D}$. If $F(S_0) \subset V$, then $F(S)
\subset V$. 
\label{stronglysaturatedprop}
\end{proposition}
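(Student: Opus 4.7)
The plan is to construct a left adjoint $\Phi : \mathcal{C} \to \mathcal{E}$ between presentable $\infty$-categories that demonstrably inverts $S$, and then factor $L_V \circ F$ through $\Phi$, where $L_V : \mathcal{D} \to \mathcal{D}[V^{-1}]$ is the Bousfield localization associated to $V$. I would take $\mathcal{E} = \mathcal{P}_\Sigma(\mathcal{C}_0[S_0^{-1}])$, where $\mathcal{C}_0[S_0^{-1}]$ is the Dwyer--Kan localization of $\mathcal{C}_0$ at $S_0$. The key input here is that since $S_0$ is closed under finite coproducts and contains the identity maps, the localization $q : \mathcal{C}_0 \to \mathcal{C}_0[S_0^{-1}]$ preserves finite coproducts and its target inherits finite coproducts from $\mathcal{C}_0$. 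Composing $q$ with the Yoneda embedding $\mathcal{C}_0[S_0^{-1}] \hookrightarrow \mathcal{P}_\Sigma(\mathcal{C}_0[S_0^{-1}])$ yields a finite-coproduct-preserving functor, which by the universal property of $\mathcal{P}_\Sigma$ \cite[Prop.~5.5.8.15]{Lur09} extends uniquely to a sifted-colimit-preserving $\Phi$. Since $\Phi$ is the $\mathcal{P}_\Sigma$-functoriality of a finite-coproduct-preserving map, it is automatically a left adjoint.

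Next I would verify the two key properties of $\Phi$. First, $\Phi$ inverts $S$: it inverts $S_0$ by construction, and since the class of morphisms inverted by a left adjoint between presentable $\infty$-categories is strongly saturated, this class contains the strongly saturated class generated by $S_0$, namely $S$. Second, the composite $\widetilde{F} := L_V \circ F$ factors through $\Phi$. Here $\widetilde{F}$ still preserves sifted colimits because $L_V$ is a left adjoint, so by the universal property of $\mathcal{P}_\Sigma$ the restriction $\widetilde{F}|_{\mathcal{C}_0}$ preserves finite coproducts. Since $\widetilde{F}|_{\mathcal{C}_0}$ inverts $S_0$ by hypothesis, it factors as $\bar{F}_0 \circ q$ with $\bar{F}_0 : \mathcal{C}_0[S_0^{-1}] \to \mathcal{D}[V^{-1}]$ again preserving finite coproducts, and this extends by the universal property to a sifted-colimit-preserving $\tilde{F}_0 : \mathcal{P}_\Sigma(\mathcal{C}_0[S_0^{-1}]) \to \mathcal{D}[V^{-1}]$. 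Both $\tilde{F}_0 \circ \Phi$ and $\widetilde{F}$ are sifted-colimit-preserving functors $\mathcal{C} \to \mathcal{D}[V^{-1}]$ agreeing on $\mathcal{C}_0$, so by uniqueness of sifted-colimit-preserving extensions they coincide.

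The conclusion is then immediate: for any $f \in S$, $\Phi(f)$ is an equivalence in $\mathcal{P}_\Sigma(\mathcal{C}_0[S_0^{-1}])$, whence $\widetilde{F}(f) = \tilde{F}_0(\Phi(f))$ is an equivalence in $\mathcal{D}[V^{-1}]$, i.e., $F(f) \in V$. The main obstacle I anticipate is the technical verification that the Dwyer--Kan localization of $\mathcal{C}_0$ at a subcategory $S_0 \subset \mathrm{Fun}(\Delta^1, \mathcal{C}_0)$ closed under finite coproducts genuinely inherits finite coproducts compatibly with $q$, together with the fact that the universal property of $\mathcal{P}_\Sigma$ then transports finite-coproduct preservation on $\mathcal{C}_0$ across all the steps of the argument. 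A secondary point is that the Bousfield localization $L_V$ requires $V$ to be of small generation, which is implicit in the setting of interest.
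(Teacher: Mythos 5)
Your approach is genuinely different from the paper's, and the key sticking point is exactly the one you flag yourself: you need the Dwyer--Kan localization $q\colon \mathcal{C}_0\to\mathcal{C}_0[S_0^{-1}]$ to preserve finite coproducts (and in particular you need $\mathcal{C}_0[S_0^{-1}]$ to \emph{have} finite coproducts, so that $\mathcal{P}_\Sigma(\mathcal{C}_0[S_0^{-1}])$ even makes sense with its usual features). This is not a harmless technicality. It is where the hypotheses on $S_0$ enter your argument, and it is essential: without it, $\Phi$ is only sifted-colimit-preserving, hence the class of $\Phi$-equivalences is closed under sifted colimits, composition, and two-out-of-three but \emph{not} under pushouts or coproducts, and so there is no reason for it to be strongly saturated, and no reason for it to contain $S$ rather than just $S_0$. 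Similarly, the factorization $\widetilde{F}\simeq\tilde{F}_0\circ\Phi$ is fine as stated (you only need both sides to preserve sifted colimits and agree on $\mathcal{C}_0$), but by itself it only tells you that $\widetilde{F}$ inverts whatever $\Phi$ inverts, so the burden is fully on knowing $\Phi$ inverts $S$. Mapping spaces in an arbitrary Dwyer--Kan localization are notoriously hard to control, and I do not see how to deduce coproduct-preservation of $q$ from ``$S_0$ closed under finite coproducts and contains identities'' alone --- that is a calculus-of-fractions-flavored statement that typically needs more structure. Until you supply a proof or a reference for that fact, the argument is incomplete at its central step. (A secondary remark: you also implicitly assume $V$ is of small generation so that $L_V$ exists as a Bousfield localization; the statement as given allows any strongly saturated class.)

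For comparison, the paper's proof avoids forming any localized category at all. It defines an auxiliary class $\mathscr{M}\subset \fun(\Delta^1,\mathcal{C})$ consisting of those $x\to y$ such that for \emph{every} $x\to x'$, the cobase change $x'\to y\cup_x x'$ is carried by $F$ into $V$. This class is visibly closed under cobase change, composition, and (because $F$ preserves sifted colimits and $V$ is closed under colimits) sifted colimits; combining these with the fact that coproducts in the arrow category can be built from cobase changes and compositions, and that arbitrary colimits in $\mathcal{P}_\Sigma(\mathcal{C}_0)$ can be built from coproducts and sifted colimits, one concludes $\mathscr{M}$ is strongly saturated. It then remains only to show $S_0\subset\mathscr{M}$, which reduces (by writing an arbitrary $x_0\to x_0'$ with $x_0\in\mathcal{C}_0$ as a sifted colimit of coproduct inclusions $x_0\to x_0\sqcup z_0$ with $z_0\in\mathcal{C}_0$) precisely to the hypothesis that $S_0$ is closed under coproducts with identities. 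The advantage of this route is that it stays entirely inside $\mathcal{C}$ and $\mathcal{D}$, uses only the closure properties you already have, and never requires any control over the localized $\infty$-category; your route, if the Dwyer--Kan coproduct claim could be verified, would give a more ``geometric'' explanation via an explicit factorization, but as written it carries an unproven load-bearing claim.
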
 
\begin{proof} 
Consider the collection $\mathscr{M}$ of maps $x \to y$ in $\mathcal{C}$ such that 
for every map $x \to x'$, the map
$F(x' \to y
\cup_x x')
)
\in V$. This collection $\mathscr{M}$ (in $\fun(\Delta^1, \mathcal{C})$) is clearly
closed under base-change, composition, and sifted colimits. 
Therefore, $\mathscr{M}$ is closed under all colimits and is in particular a
strongly saturated class. 

We claim that this collection $\mathscr{M}$ contains all of $S$; it suffices to
see that $S_0 \subset \mathscr{M}$. To see this, let $x_0 \to
y_0$ be a map in $S_0$. 
We need to see that the base-change of this map along a map $x_0 \to x_0'$
is carried by $F$ into $V$. 
Any map $x_0 \to x_0'$ can be written as a sifted
colimit of maps $x_0 \to x_0 \sqcup z$ for $z \in \mathcal{C}$, 
so one reduces to this case. 
Writing $z$ as a sifted colimit of objects in $\mathcal{C}_0$, 
we reduce to the case where $z = z_0 \in \mathcal{C}_0$. Then the assertion is
part of the hypotheses, so we obtain $(x_0 \to y_0) \in \mathscr{M}$ as desired. 
\end{proof}

\begin{corollary} 
\label{generalcorloc}
Let $\mathcal{A} , \mathcal{B}$ be $\infty$-categories admitting finite
coproducts.
Let $F_0: \mathcal{A} \to \mathcal{B}$ be a functor preserving finite coproducts,
inducing a cocontinuous functor 
$F: \mathcal{P}_\Sigma(\mathcal{A}) \to \mathcal{P}_\Sigma(\mathcal{B})$ with a
right adjoint $G: \mathcal{P}_\Sigma(\mathcal{B}) \to
\mathcal{P}_\Sigma(\mathcal{A})$ which preserves sifted colimits. 

Let $S_0$ be a class of maps in $\mathcal{A}$ which is closed under coproducts
and contains the identity maps. Let $T_0 = F(S_0) $ denote the
induced class of maps in $\mathcal{B}$; let $S, T$ be the induced strongly
saturated classes of maps in $\mathcal{P}_\Sigma(\mathcal{A}) \to
\mathcal{P}_\Sigma(\mathcal{B})$, and let $L_S, L_T$ be the associated
Bousfield localization functors. 
Suppose that the class of maps $G(T_0)  = GF(S_0)$ in 
$\mathcal{P}_\Sigma(\mathcal{A})$ 
belongs to the strongly saturated class generated by $S_0$. 

Then the functor 
$G : \mathcal{P}_\Sigma(\mathcal{B}) \to \mathcal{P}_\Sigma(\mathcal{A})  $
commutes with the respective localization functors. 
More precisely:

\begin{enumerate}
\item  For any $Y \in \mathcal{P}_\Sigma(\mathcal{B})$ which is $T$-local,
$G(Y)$ is $S$-local. 
\item
For any $Y' \in \mathcal{P}_\Sigma(\mathcal{B})$, the natural map 
$Y' \to L_T (Y')$ induces (by the property (1)) a map 
$L_S G(Y') \to G( L_T (Y'))$; this map is an equivalence. 
\item $G$ induces a functor $L_T \mathcal{P}_\Sigma( \mathcal{B}) \to L_S
\mathcal{P}_\Sigma(\mathcal{A})$ which commutes with limits and sifted
colimits, which  is right adjoint to the functor $L_T F:
L_S \mathcal{P}_\Sigma(\mathcal{A}) \to  L_T \mathcal{P}_\Sigma(\mathcal{B})$. 
\end{enumerate}\end{corollary}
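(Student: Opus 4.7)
The plan is to deduce all three assertions from part (1) and a single application of Proposition~\ref{preserveclass} to the functor $G$ itself. The main obstacle is part (2): propagating the hypothesis $G(T_0) \subset S$ from the generating class to the entire strongly saturated class $T$, for which one really uses that $G$ preserves sifted colimits even though it is not cocontinuous.

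For part (1), assume $Y$ is $T$-local. For any $s: x \to y$ in $S_0$, the adjunction $F \dashv G$ gives $\mathrm{Map}(s, G(Y)) \simeq \mathrm{Map}(F(s), Y)$, and the right-hand side is an equivalence since $F(s) \in T_0 \subset T$. Thus $G(Y)$ is $S_0$-local; since the class of maps $u$ for which $\mathrm{Map}(u, G(Y))$ is an equivalence is always strongly saturated, it contains the saturation $S$ of $S_0$, so $G(Y)$ is $S$-local.

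For part (2), I would apply Proposition~\ref{preserveclass} to $G : \mathcal{P}_\Sigma(\mathcal{B}) \to \mathcal{P}_\Sigma(\mathcal{A})$, with generating subcategory $\mathcal{B}$, starting class $T_0$, and target strongly saturated class $V = S$. The class $T_0 = F_0(S_0) \subset \mathrm{Fun}(\Delta^1,\mathcal{B})$ is closed under finite coproducts (since $F_0$ preserves them and $S_0$ is closed under them) and contains identities (since $S_0$ does and $F_0(\mathrm{id}_x) = \mathrm{id}_{F_0(x)}$); by hypothesis $G(T_0) \subset S$; and $G$ preserves sifted colimits. The proposition therefore yields $G(T) \subset S$. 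Applying $G$ to the $T$-equivalence $Y' \to L_T(Y')$ produces a map $G(Y') \to G(L_T(Y'))$ which is an $S$-equivalence. Since $G(L_T(Y'))$ is $S$-local by part (1), the induced comparison $L_S G(Y') \to G(L_T(Y'))$ is an equivalence.

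For part (3), part (1) shows that $G$ restricts to a functor $L_T \mathcal{P}_\Sigma(\mathcal{B}) \to L_S \mathcal{P}_\Sigma(\mathcal{A})$. The adjunction $L_T F \dashv G$ on the localizations follows formally from $F \dashv G$, because the relevant mapping spaces on the localized $\infty$-categories agree with those in the ambient $\mathcal{P}_\Sigma$'s (source and target being local). Preservation of limits holds because $G$ already preserves limits on $\mathcal{P}_\Sigma$ and the fully faithful inclusion of $S$-local objects does as well. For sifted colimits, a sifted colimit of a diagram $\{Y_\alpha\}$ of $T$-local objects in $L_T \mathcal{P}_\Sigma(\mathcal{B})$ is computed as $L_T(\mathrm{colim}_\alpha Y_\alpha)$ with the colimit taken in $\mathcal{P}_\Sigma(\mathcal{B})$; combining part (2) with the hypothesis that $G$ preserves sifted colimits on $\mathcal{P}_\Sigma$ gives $G(L_T \mathrm{colim}_\alpha Y_\alpha) \simeq L_S G(\mathrm{colim}_\alpha Y_\alpha) \simeq L_S \mathrm{colim}_\alpha G(Y_\alpha)$, which is exactly the sifted colimit in $L_S \mathcal{P}_\Sigma(\mathcal{A})$.
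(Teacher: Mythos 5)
Your proposal is correct and takes essentially the same route as the paper: part (1) via the adjunction, part (2) by applying Proposition~\ref{preserveclass} to $G$ with generating class $T_0 = F_0(S_0)$ (verifying closure under coproducts and identities) to get $G(T)\subset S$ and then using locality of $G(L_T Y')$, and part (3) by restriction of the adjunction. The minor variation in (1)—checking $S_0$-locality directly and invoking saturation of the class of $G(Y)$-equivalences rather than noting $F(S)\subset T$—is an equivalent bookkeeping choice.
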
 
\begin{proof} 
Part (1) follows because $F$ (which preserves colimits) carries $S$ into $T$, so
the right adjoint 
necessarily carries $T$-local objects into $S$-local objects. 

By \Cref{stronglysaturatedprop}, $G$ carries the strongly saturated class $T$ in
$\mathcal{P}_\Sigma(\mathcal{B})$ 
into the strongly saturated class $S$ in $\mathcal{P}_\Sigma(\mathcal{A})$. 
Now in (2), the map $Y' \to L_T(Y')$ belongs to the strongly saturated class $T$, whence
$G(Y') \to G(L_T(Y'))$ belongs to the strongly saturated class $S$. Since the
target of this map is $S$-local, it follows that $L_S G(Y') \xrightarrow{\sim}
G(L_T(Y'))$. 
This proves part (2). 

For (3), we already saw in (1) that $G$ induces a functor $L_T
\mathcal{P}_\Sigma(\mathcal{B}) \to L_S \mathcal{P}_\Sigma(\mathcal{A})$, and
clearly this commutes with limits. 
It also commutes with sifted colimits since the functor $G:
\mathcal{P}_\Sigma(\mathcal{B}) \to \mathcal{P}_\Sigma(\mathcal{A})$ commutes
with sifted colimits and since $G$ carries the $L_T$-localization into the
$L_S$-localization. From this (3) follows. 
\end{proof} 
\subsection{Proof of \Cref{mainpolythm}}
\label{proofmainthm}
The proof 
of Theorem~\ref{mainpolythm} will require some more preliminaries. 
To begin with, we will need the construction of a universal target for a degree
$\leq n$ functor. 

\begin{cons}
Given $\mathcal{C} \in \catex$, we define the object $\Gamma_n \mathcal{C}
\in \catex$ such that we have a natural equivalence for any $\mathcal{D} \in
\catex$,
\[ \fun_{\mathrm{ex}}( \Gamma_n \mathcal{C}, \mathcal{D}) \simeq \fun_{\leq n}(\mathcal{C},
\mathcal{D}). \]
In particular, $\Gamma_n$ receives a degree $\leq n$ functor $\mathcal{C} \to
\Gamma_n \mathcal{C}$ and $\Gamma_n \mathcal{C}$ is universal for this
structure. 
Explicitly, $\Gamma_n \mathcal{C}$ is obtained by starting with the free
idempotent-complete stable
$\infty$-category on $\mathcal{C}$, i.e., compact objects in $\sp$-valued
presheaves on $\mathcal{C}$, and then forming the minimal exact localization such that
the Yoneda functor becomes $n$-excisive. 
\end{cons}

\begin{remark}[Some cardinality estimation] 
Recall again that $\kappa$ is assumed to be uncountable. 
If $\mathcal{C} \in \catexk$, then we claim that $\Gamma_n \mathcal{C} \in
\catexk$ for all $n \geq 0$. 
In fact, we observe that $\mathcal{C} \in \catexk$ if and only if $\mathcal{C}$ is
$\kappa$-compact as an object of $\mathrm{Cat}_\infty$; moreover, this holds if
and only if $\mathcal{C}$ has $<\kappa$ isomorphism classes of objects and the
mapping spaces in $\mathcal{C}$ are $\kappa$-small. 
\end{remark}

The crucial observation, for our purposes, is simply that $\Gamma_n$ behaves
relatively well with respect to semiorthogonal decompositions: it transforms
them into something that, while slightly more complicated, is very controllable
on $K$-theory.

\begin{proposition} 
\label{gammanunivK}
Let $F: \mathcal{C} \to \mathcal{C}'$ be a universal $K$-equivalence. 
 Then the 
map $\Gamma_n \mathcal{C} \to \Gamma_n \mathcal{C}'$ is a universal
$K$-equivalence. That is, for every $\mathcal{D} \in \catex$, the 
functor
\[ F^*: \fun_{\leq n}(\mathcal{C}', \mathcal{D}) \to
\fun_{\leq n}(\mathcal{C}, \mathcal{D}),   \]
induces an isomorphism on $K_0$. 
\end{proposition}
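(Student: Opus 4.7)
By the universal property of $\Gamma_n$, the statement that $\Gamma_n \mathcal{C} \to \Gamma_n \mathcal{C}'$ is a universal $K$-equivalence is exactly the statement that $F^*$ induces an isomorphism on $K_0$. Since $F$ is a universal $K$-equivalence in $\catex$, pick an exact $G: \mathcal{C}' \to \mathcal{C}$ witnessing this, so $[GF] = [\mathrm{id}_{\mathcal{C}}]$ in $K_0(\fun_{\mathrm{ex}}(\mathcal{C}, \mathcal{C}))$ and $[FG] = [\mathrm{id}_{\mathcal{C}'}]$ in $K_0(\fun_{\mathrm{ex}}(\mathcal{C}', \mathcal{C}'))$. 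The candidate inverse to $F^*$ is $G^*$. Both $F^*$ and $G^*$ are exact functors of stable $\infty$-categories (precomposition preserves pointwise limits and colimits), hence give group homomorphisms on $K_0$. The problem reduces to showing that $F^*G^*(\Phi) = \Phi \circ GF$ and $G^* F^*(\Phi') = \Phi' \circ FG$ have the classes of $\Phi$ and $\Phi'$ in the respective $K_0$'s.

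The key idea is to exhibit, for each fixed polynomial $\Phi: \mathcal{C} \to \mathcal{D}$ of degree $\leq n$, an auxiliary polynomial functor between stable $\infty$-categories
\[ \Theta_\Phi: \fun_{\mathrm{ex}}(\mathcal{C}, \mathcal{C}) \to \fun_{\leq n}(\mathcal{C}, \mathcal{D}), \qquad T \mapsto \Phi \circ T, \]
and show it is polynomial of degree $\leq n$. Granting this, \Cref{K0stable} supplies a polynomial map of sets $(\Theta_\Phi)_* : K_0(\fun_{\mathrm{ex}}(\mathcal{C}, \mathcal{C})) \to K_0(\fun_{\leq n}(\mathcal{C}, \mathcal{D}))$ satisfying $(\Theta_\Phi)_*([T]) = [\Phi \circ T]$. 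Applying this to the equality $[GF] = [\mathrm{id}]$ yields $[\Phi \circ GF] = [\Phi]$, as desired; the symmetric argument using the functor $T' \mapsto \Phi' \circ T'$ from $\fun_{\mathrm{ex}}(\mathcal{C}', \mathcal{C}')$ to $\fun_{\leq n}(\mathcal{C}', \mathcal{D})$ handles the other composition.

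The technical heart of the argument is the polynomiality of $\Theta_\Phi$. First, $\Phi \circ T$ is genuinely $n$-excisive since $T$ is exact and hence preserves strongly coCartesian cubes. For polynomiality in $T$, the plan is to reduce to a pointwise check. Finite geometric realizations and finite limits and colimits in the stable $\infty$-category $\fun_{\leq n}(\mathcal{C}, \mathcal{D})$ are computed pointwise: a pointwise colimit of $n$-excisive functors remains $n$-excisive because the conditions (carrying strongly coCartesian $(n+1)$-cubes to coCartesian ones, and preserving finite geometric realizations of skeletal simplicial objects) are themselves preserved under colimits in $\mathcal{D}$. Consequently, $\Theta_\Phi$ is $n$-excisive iff $\Phi \circ \mathrm{ev}_X$ is $n$-excisive for every $X \in \mathcal{C}$, which follows since $\mathrm{ev}_X$ is exact and $\Phi$ is $n$-excisive; similarly, $\Theta_\Phi$ preserves finite geometric realizations of skeletal simplicial objects pointwise, using that evaluating a skeletal simplicial object in $\fun_{\mathrm{ex}}(\mathcal{C}, \mathcal{C})$ at $X$ produces a skeletal simplicial object in $\mathcal{C}$ (visible via the Dold-Kan characterization of \Cref{nskeletalDK} since $\mathrm{ev}_X$ is additive).

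The main obstacle is this pointwise verification, specifically the claim that "polynomial of degree $\leq n$" is a pointwise notion in functor categories into a stable $\infty$-category with pointwise (co)limits. Once this structural fact is settled, the rest of the argument is a formal application of \Cref{K0stable} to reduce the $K_0$-level statement about $F^*$ to the hypothesized $K$-equivalence relation for $F$ in $\catex$.
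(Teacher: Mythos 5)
Your argument is correct and follows essentially the same route as the paper: reduce to showing $F^*G^*$ and $G^*F^*$ induce the identity on $K_0$ by observing that postcomposition $\Theta_\Phi: \fun_{\mathrm{ex}}(\mathcal{C},\mathcal{C}) \to \fun_{\leq n}(\mathcal{C},\mathcal{D})$, $T \mapsto \Phi \circ T$, is a degree $\leq n$ polynomial functor and then applying \Cref{K0stable} to the relation $[GF]=[\mathrm{id}]$. The paper states the polynomiality of $\Theta_\Phi$ without proof; your pointwise verification of it (using that (co)limits in these functor categories are computed objectwise and that $\mathrm{ev}_X$ is exact) is a correct way to fill that gap.
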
 
\begin{proof} 
Let $G:\mathcal{C}' \to \mathcal{C}$ be a functor such that $F \circ G, G \circ
F$ satisfy \eqref{Kequivdef}.
It suffices
to show that the composite
$\fun_{\leq n}(\mathcal{C}, \mathcal{D}) \stackrel{G^*}{\to} 
\fun_{\leq n}(\mathcal{C}', \mathcal{D})
\stackrel{F^*}{\to} \fun_{\leq n}(\mathcal{C}, \mathcal{D})$
is the identity on $K_0$ (and the converse direction follows by symmetry). 

\newcommand{\ex}{\mathrm{ex}}
To see this, fix a functor $f \in 
\fun_{\leq n}(\mathcal{C}, \mathcal{D})$. We then have a degree $n$ functor
\[ f \circ \cdot : \fun_{\ex} (\mathcal{C} , \mathcal{C}) \to \fun_{\leq
n}(\mathcal{C},
\mathcal{D}), \quad \phi \mapsto f \circ \phi. \]
By Proposition~\ref{K0stable}, this induces a unique map on $K_0$. 
It follows that 
since $G \circ F, \mathrm{id}$ define the same class in $K_0( \fun_{\ex}
(\mathcal{C}' , \mathcal{C}))$, 
the functors $f \circ G \circ F, f$ define the same class in $K_0(\fun_{\leq n}(\mathcal{C},
\mathcal{D}))$. 
This shows precisely that $F^* \circ G^*$ induces the identity on $K_0$.
Similarly, $G^* \circ F^*$ induces the identity on $K_0$. 
This completes the proof. 
\end{proof}

\newcommand{\Res}{\mathrm{Res}}
\begin{proof}[Proof of Theorem~\ref{mainpolythm}] 
Consider the commutative diagram
\[ \xymatrix{
\funp_{\mathrm{add}}( \catpk, \mathcal{S}) \ar[d]^{\Res}  \ar[r] &
\funp(\catpk, \mathcal{S}) \ar[d]^{\Res}
\\
\funp_{\mathrm{add}}(\catexk, \mathcal{S}) \ar[r] &  \funp(\catexk, \mathcal{S})
}, \]
where the horizontal rows are the inclusions and the vertical arrows 
are given by restriction along $\catexk \subset \catpk$. 
Our goal is to show that when we reverse the horizontal arrows by replacing the
inclusion functors  by additivizations, the diagram still commutes.

This statement
fits into the setup of \Cref{generalcorloc}. 
Here we take $\mathcal{A} = (\catexk)^{op}$
and $\mathcal{B} = (\catpk)^{op}$, and $F_0$ to be the opposite of the inclusion
$\catexk \to \catpk$. 
Moreover, $S_0$ can be taken to be the class of 
universal $K$-equivalences in $\mathcal{A} = (\catexk)^{op}$. 
The local objects then correspond to the additive invariants (\Cref{univKislocal}).

Unwinding the definitions, we find that in order to apply \Cref{generalcorloc}, we now need to verify that 
if $\mathcal{C} \to \mathcal{D}$ is a universal $K$-equivalence in $\catexk$,
then the map 
in $\fun^{\pi}(\catexk, \mathcal{S})$ given by 
\[ \mathrm{Hom}_{\catpk}( \mathcal{D}, -) \to
\mathrm{Hom}_{\catpk}(\mathcal{C}, -) \]
induces an equivalence upon additivizations. 
Now by definition we have
\[ \mathrm{Hom}_{\catpk}( \mathcal{D}, -) = \varinjlim_{n}
\mathrm{Hom}_{\catex}(\Gamma_n \mathcal{D}, - ) , \]
and similarly for $\mathrm{Hom}_{\catpk}(\mathcal{C}, -) $. 
It therefore suffices to show that 

\[
\mathrm{Hom}_{\catex}( \Gamma_n \mathcal{D}, -) \to \mathrm{Hom}_{\catex}(
\Gamma_n \mathcal{C}, -)
\]
(as a map in $\fun^{\pi}(\catexk, \mathcal{S})$)
induces an equivalence on additivizations. 
This follows from \Cref{gammanunivK} 
and \Cref{univKislocal}, noting that  $\Gamma_n \mathcal{C}, \Gamma_n
\mathcal{D}$ belong to $\catexk$ since $\mathcal{C}, \mathcal{D}$ do and
$\kappa$ is uncountable.\end{proof} 

\section{Application to perfect $\mathbb{F}_p$-algebras}

In this section, we give a new proof of the classical and fundamental result of 
B\"okstedt calculating $\THH(\mathbb{F}_p)$, using elementary properties of
polynomial functors. 
As a warm-up, we prove a generalization of the following classical result. 
\begin{theorem}[{\cite{Hi81, Kra80}}] 
\label{Quillenthm}
Let $R$  be a perfect $\mathbb{F}_p$-algebra. Then $K_i(R) $ is a
$\mathbb{Z}[1/p]$-module for $i \geq 1$. 
\end{theorem}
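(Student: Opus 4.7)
The strategy follows the classical Hiller--Kratzer pattern: use the polynomial functoriality of Theorem~\ref{mainthm} to construct an Adams-type operation $\psi^p: K(R) \to K(R)$, and then exploit two features it should exhibit for a perfect $\mathbb{F}_p$-algebra $R$: it is an equivalence (because it coincides with the Frobenius, which is invertible by perfectness), and it is divisible by $p$ on $K_i(R)$ for $i \geq 1$. Taking these together forces multiplication by $p$ to act invertibly on $K_i(R)$, which is the desired conclusion.

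The construction of $\psi^p$ proceeds as follows. Each exterior power $\bigwedge^i: \perf(R) \to \perf(R)$ is polynomial of degree $i$ and hence, by Theorem~\ref{mainthm}, induces an operation $\lambda^i: K(R) \to K(R)$ of spaces; assembling these via Newton's recursion yields $\psi^p: K(R) \to K(R)$, itself a polynomial operation (possibly after inverting some fixed integer independent of $R$ to clear denominators in Newton's formulas). For the Frobenius identification, one shows that for any commutative $\mathbb{F}_p$-algebra $R$ the operation $\psi^p$ agrees on $K_*(R)$ with the map induced by the ring Frobenius $F: R \to R$. On $K_0$ this is a classical identity verified on line bundles via the splitting principle; to propagate the identity to all $K_i$ one uses the universal property of $K$-theory (Theorem~\ref{univKtheory}) applied to a natural transformation $\psi^p \Rightarrow F_*$ of endofunctors of $\perf(R)$, constructed at the additive level and lifted by the universal property encoded in Theorem~\ref{mainpolythm}. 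When $R$ is perfect, $F: R \to R$ is a ring isomorphism, so $F_*$, and hence $\psi^p$, is an automorphism of $K_i(R)$ for every $i$.

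It remains to establish the $p$-divisibility in positive degrees: on $K_i(R)$ for $i \geq 1$, $\psi^p$ factors as $p \cdot \phi$ for some endomorphism $\phi$. This is a reformulation of the classical $\lambda$-ring statement that $\psi^p$ takes values in $p \cdot K_i$ on reduced higher $K$-theory, which is usually deduced from the interaction of $\psi^p$ with the $\gamma$-filtration; in the present setup one would like to deduce it directly from the structure of polynomial additivization in \Cref{mainpolythm}, using that a polynomial functor of degree $\geq 2$ has all its Taylor coefficients of degree $\geq 2$ contributing $p$-divisible pieces on positive homotopy (ultimately because the combinatorics of $\binom{p}{k}$ for $0 < k < p$ introduces a factor of $p$). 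Combining this with the Frobenius identification, $p \cdot \phi$ is an automorphism of $K_i(R)$ for $i \geq 1$, forcing multiplication by $p$ to be invertible, so $K_i(R)$ is a $\mathbb{Z}[1/p]$-module.

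The main obstacle will be this last step: making rigorous, in the polynomial $\infty$-categorical setting and without passing through the $\gamma$-filtration, the claim that the ``nonlinear part'' of a polynomial functor of degree $\geq 2$ lands in a $p$-divisible subgroup of positive $K$-theory. The classical approach via $\gamma$-operations and Soul\'e's theorem is available, but I expect that a cleaner argument should follow from the monadic/localization picture developed in Section~4 together with the explicit description of $K_0$-level polynomial operations in \Cref{K0stable}, applied to the universal example of a polynomial self-operation of $\perf(R)$.
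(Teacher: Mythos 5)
Your proposal is the classical Hiller--Kratzer route (build $\lambda^i$ from exterior powers, assemble $\psi^p$ via Newton's identities, identify $\psi^p$ with Frobenius, then establish $p$-divisibility of $\psi^p$ on reduced higher $K$-theory), but that is precisely what the paper announces it \emph{avoids}: ``we avoid the use of the nilpotence of the $\gamma$-filtration by using instead the following lemma.'' Your acknowledged gap at the end---$p$-divisibility of $\psi^p$ on positive $K$-groups without the $\gamma$-filtration---is real, and I don't think the localization picture of Section~4 closes it in the form you suggest; it controls when polynomial functors induce the \emph{same} map, not when they induce $p$-divisible maps.

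The paper's actual argument never mentions $\lambda$-operations or $\psi^p$. It compares the Frobenius twist $(-)^{(1)}$ directly to the $p$th \emph{tensor power} functor $m_p$, working in $K_0$ of the category $\fun_{\leq p}$ of reduced degree-$\leq p$ endofunctors of $\proj(R)$. \Cref{reducedfunctorlem} establishes $[(-)^{(1)}] \equiv [m_p] \pmod p$ there, by a finite character computation in the category of homogeneous strict polynomial functors of degree $p$ (the two characters are $\sum \lambda_i^p$ and $(\sum \lambda_i)^p$, which are congruent mod $p$). Then: the action of a polynomial functor on an additive invariant $F$ factors through $K_0$ of the functor category (this is where additivity is used), so the congruence transfers to endomorphisms of $\pi_i F(\perf(R))$; and $m_p$ acts as \emph{zero} on $\pi_i$ for $i \geq 1$, because it factors through the diagonal $F(\perf(R)) \to \prod_p F(\perf(R)) \to F(\perf(R))$ and the second map is reduced in each variable. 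Hence $(-)^{(1)}$ is divisible by $p$ on $\pi_i$, $i \geq 1$, and since for $R$ perfect it is an automorphism, multiplication by $p$ is invertible. This is both shorter and strictly more elementary than the $\psi^p$ route: no Newton polynomials, no splitting principle, no $\gamma$-filtration. Two further cautions about your sketch: (i) your step ``propagate $\psi^p = F_*$ from $K_0$ to all $K_i$'' as stated is not automatic---agreement on $K_0(R)$ doesn't determine the map on higher $K$-groups; what actually does the work is agreement in $K_0$ of the \emph{functor category}, which is what the paper's Lemma establishes; (ii) the paper proves the stronger \Cref{Zpinvertible} for an arbitrary additive invariant on $\catp$, not just $K$-theory, which your $\lambda$-ring framing doesn't naturally accommodate.
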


Our approach to \Cref{Quillenthm} is very close to the proof in
\emph{loc.~cit.}; however, we avoid the use of the nilpotence of the
$\gamma$-filtration by using instead the following lemma. 

\begin{lemma} 
\label{reducedfunctorlem}
Let $R$ be an $\mathbb{F}_p$-algebra. 
Consider the exact category of degree $\leq p$, reduced functors $\proj(R) \to
\proj(R)$. 
This category contains the functors given by the Frobenius twist  $(-)^{(1)}$ and the $p$th tensor
power $m_p$. 
In $K_0$ of this category, we have $[(-)^{(1)}] -[m_p]$ is divisible by $p$. 
\end{lemma}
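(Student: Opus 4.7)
The plan is to construct a short exact sequence $0 \to (-)^{(1)} \to m_p \to Q \to 0$ of reduced polynomial functors of degree $\leq p$, and then decompose $[Q]$ as $p$ times a single class in $K_0$ by exploiting the cyclic-group action on tensor powers.

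First, I use the natural $R$-linear transformation $V^{(1)} \to V^{\otimes p}$ given by the $R$-linear extension of $v \mapsto v^{\otimes p}$ (well-defined after Frobenius twist since $(rv)^{\otimes p} = r^p v^{\otimes p}$). For $V = R^n$, the basis elements of $V^{(1)}$ map to the diagonal tensors $e_i^{\otimes p}$, which are part of an $R$-module basis of $V^{\otimes p}$, so the map is a pointwise split injection. Its cokernel $Q(V) := V^{\otimes p}/V^{(1)}$ is pointwise projective and is a reduced polynomial functor of degree $\leq p$, and we obtain in $K_0$
\[ [m_p] - [(-)^{(1)}] = [Q]. \]
It thus suffices to show $[Q] \in p \cdot K_0$.

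Let $\sigma = (1\,2\,\cdots\,p) \in \Sigma_p$ generate a subgroup $\mathbb{Z}/p$, and set $y = \sigma - 1 \in R[\mathbb{Z}/p]$; note $y^p = \sigma^p - 1 = 0$ in characteristic $p$. Since the image of $V^{(1)}$ in $V^{\otimes p}$ is pointwise fixed by the cyclic action, $\mathbb{Z}/p$ acts on the polynomial functor $Q$ by natural automorphisms. The key claim is that $Q(V)$ is pointwise a projective $R[\mathbb{Z}/p]$-module: for $V = R^n$, the action of $\mathbb{Z}/p$ permutes the basis $\{e_{i_1} \otimes \cdots \otimes e_{i_p}\}$ of $V^{\otimes p}$ with orbits of size $1$ (constant tuples, spanning $V^{(1)}$) or size $p$ (non-constant tuples; since $p$ is prime, any nontrivial stabilizer would have to be all of $\mathbb{Z}/p$, forcing the tuple to be constant). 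Hence $Q(R^n) \cong R[\mathbb{Z}/p]^{\oplus (n^p - n)/p}$ is $R[\mathbb{Z}/p]$-free; for arbitrary projective $V$, $Q(V)$ is a $\mathbb{Z}/p$-equivariant retract of this free case, and so is projective over $R[\mathbb{Z}/p]$.

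Now consider the filtration $Q \supset y Q \supset y^2 Q \supset \cdots \supset y^p Q = 0$. For any projective $R[\mathbb{Z}/p]$-module $M$, multiplication by $y^i$ induces a natural $R$-linear isomorphism $M/yM \xrightarrow{\sim} y^i M/y^{i+1} M$ (immediate for $M = R[\mathbb{Z}/p]$; inherited by free modules and then retracts). Applied pointwise to $M = Q(V)$ this gives a natural isomorphism of polynomial functors valued in $\proj(R)$,
\[ Q_{\mathbb{Z}/p} \xrightarrow{\sim} y^i Q/y^{i+1} Q, \]
for each $0 \leq i \leq p-1$. The filtration of $Q$ thus has $p$ subquotients all equal to $Q_{\mathbb{Z}/p}$, yielding $[Q] = p \cdot [Q_{\mathbb{Z}/p}]$ and so
\[ [(-)^{(1)}] - [m_p] = -p \cdot [Q_{\mathbb{Z}/p}], \]
which is divisible by $p$.

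The main technical obstacle is verifying that $Q(V)$ is indeed projective over $R[\mathbb{Z}/p]$ pointwise, and that each submodule $y^i Q(V)$ is projective over $R$ so that everything stays inside $\proj(R)$. Both reduce to the free case because the cyclic action commutes with any splitting $V \to F \to V$ exhibiting $V$ as a retract of a free $R$-module $F$ (the splittings involve no $\mathbb{Z}/p$-data), and in the free case the relevant modules are explicitly free over $R$ of the dimensions predicted by the $\mathbb{F}_p[y]/y^p$-module structure.
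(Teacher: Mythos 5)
Your proof breaks at the very first step: the map $(-)^{(1)} \to m_p$ you propose is not a natural transformation, so the short exact sequence $0 \to (-)^{(1)} \to m_p \to Q \to 0$ does not exist. The assignment $v \mapsto v^{\otimes p}$ is natural as a map of \emph{sets} (it commutes with $f^{\otimes p}$), but it is not additive: $(v+w)^{\otimes p}$ has cross terms that do not cancel in $V^{\otimes p}$. Declaring the map to be ``the $R$-linear extension of $e_i \mapsto e_i^{\otimes p}$'' fixes a choice of basis and produces a linear map that depends on that basis, hence cannot be natural. Concretely, for $p=2$, $R=\mathbb{F}_2$, $V=R^2$, and $f\colon V\to V$ with $f(e_1)=e_1$, $f(e_2)=e_1+e_2$, the two routes around the naturality square disagree: $f^{\otimes 2}(e_2 \otimes e_2)=(e_1+e_2)\otimes(e_1+e_2)$ contains the cross term $e_1\otimes e_2 + e_2\otimes e_1$, while the linearly extended map applied to $f^{(1)}(e_2)=e_1+e_2$ gives $e_1\otimes e_1 + e_2\otimes e_2$. (One can also see abstractly that there is no nonzero natural \emph{linear} map $V \to V^{\otimes p}$ for $R=\mathbb{F}_p$, where $(-)^{(1)}$ is the identity functor: restricting along all maps $\mathbb{F}_p \to V$ pins it down to $v\mapsto c\,v^{\otimes p}$ on rank-one images, and additivity then forces $c=0$.) The Frobenius twist is a natural \emph{subfunctor} of $S^p$ via $v\mapsto v^p$ (additive in characteristic $p$), and a natural \emph{quotient} of $\Gamma^p$, but it is not a subfunctor of $\otimes^p$ — so the cyclic-group trick you want to run on the quotient of $m_p$ has no input.

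Your subsequent $\mathbb{Z}/p$-orbit analysis (free part of $V^{\otimes p}$ is $R[\mathbb{Z}/p]$-free, filtration by powers of $\sigma-1$ with $p$ identical subquotients) is a genuinely nice idea and is consistent with a dimension/character count, but it needs an actual natural short exact sequence to act on. The paper sidesteps any explicit resolution by working in $K_0$ of homogeneous degree-$p$ strict polynomial functors, which by \cite[Th.~8.10]{HKT} (after \cite{Gre80}) is the free abelian group of homogeneous symmetric degree-$p$ integer polynomials; there the character of $(-)^{(1)}$ is $\sum_i \lambda_i^p$ and that of $m_p$ is $(\sum_i \lambda_i)^p$, and the freshman's dream gives the congruence mod $p$. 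If you want a constructive variant of your idea, you could try to combine the genuine inclusion $(-)^{(1)} \hookrightarrow S^p$ (or the quotient $\Gamma^p \twoheadrightarrow (-)^{(1)}$) with the $\mathbb{Z}/p$-filtration of $\otimes^p$, but as written the argument does not go through.
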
 

\begin{proof} 
This congruence in fact holds in the category $\mathrm{Strict}_p$ of homogeneous degree $p$ strict
polynomial functors over $\mathbb{F}_p$ in the sense of \cite{FS97}, which act on
the category of finitely generated projective modules over any
$\mathbb{F}_p$-algebra.\footnote{In
fact, it is essentially the assertion that $\psi^p$ and Frobenius agree mod $p$.} 
By \cite[Th.~8.10]{HKT} (a calculation due to \cite{Gre80}), 
the class $K_0( \mathrm{Strict}_p)$ of any functor $F \in \mathrm{Strict}_p$ is
determined by the $\mathbb{G}_m^n$-character of $F( \mathbb{F}_p^n )$ for any $n
\geq p$; more precisely, $K_0( \mathrm{Strict}_p)$ is the abelian group of homogeneous
symmetric degree $p$ polynomials in the fundamental weights $\lambda_1, \dots,
\lambda_n$ of $\mathbb{G}_m^n$ (obtained by projection on each factor). 

Now the character of $(\mathbb{F}_p^n)^{(1)}$ 
is given by $\lambda_1^p + \dots + \lambda_n^p$ while the character of
$m_p(\mathbb{F}_p^n)$ is given by $(\lambda_1 + \dots + \lambda_n)^p$; since
these two homogeneous symmetric polynomials are congruent mod $p$, the result
follows. 
\end{proof}

Our generalization of 
\Cref{Quillenthm} is given by the following result. 
While it applies to $K$-theory  (by \Cref{mainthm}), it also applies to other
invariants such as $K( \mathrm{Fun}(\mathcal{C}, -))$ for any $\infty$-category
$\mathcal{C}$. 

\begin{proposition} 
\label{Zpinvertible}
Let $F \colon \catp \to \mathcal{S}$ be an additive invariant (in particular finitely product preserving). Then for any perfect $\mathbb{F}_p$-algebra $R$, we have that $\pi_i F(
\perf(R))$ is a $\mathbb{Z}[1/p]$-module for $i \geq 1$. 
\end{proposition}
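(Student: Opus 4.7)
The plan is to use polynomial functoriality (\Cref{mainthm}) to have reduced polynomial endofunctors of $\perf(R)$ act on the abelian groups $A_i := \pi_i F(\perf(R))$ for $i \geq 1$, and to combine the mod-$p$ congruence from \Cref{reducedfunctorlem} with the fact that the Frobenius twist is an equivalence on perfect $R$.

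Since $F$ is additive, $F(\perf(R))$ is grouplike $E_\infty$, so each $A_i$ is an abelian group for $i \geq 1$. By \Cref{mainthm}, any polynomial endofunctor $\phi$ of $\perf(R)$ gives a based self-map $F(\phi)$, and hence a group endomorphism $\phi_\bullet$ of $A_i$ (a homomorphism because $\pi_i$ for $i \geq 1$ is a functor from based spaces to abelian groups). I would then verify that $\phi \mapsto \phi_\bullet$ factors through the $K_0$ of the exact category of reduced degree $\leq p$ polynomial endofunctors of $\perf(R)$: direct-sum additivity $(\phi \oplus \psi)_\bullet = \phi_\bullet + \psi_\bullet$ is immediate from the fact that the biproduct $\oplus : \perf(R)^{\times 2} \to \perf(R)$ is exact and that $F(\oplus)$ encodes the group operation on $\pi_i$, while additivity across cofibre sequences $\phi' \to \phi \to \phi''$ of polynomial functors should follow by applying the additivity of $F|_{\catex}$ to the pointwise cofibre sequences $\phi'(X) \to \phi(X) \to \phi''(X)$ in the target.

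Applying \Cref{reducedfunctorlem}, we obtain $[(-)^{(1)}] - [m_p] = p[G]$ in this $K_0$, so in $\mathrm{End}(A_i)$:
\[
((-)^{(1)})_\bullet \;-\; (m_p)_\bullet \;=\; p \cdot G_\bullet .
\]
Because $R$ is perfect, the Frobenius twist $(-)^{(1)} = R \otimes_{R,\varphi}(-)$ is an equivalence of $\perf(R)$ (its inverse is $\varphi^{-1}$-base-change), so $((-)^{(1)})_\bullet$ is an automorphism of $A_i$. To close the argument I would show that $(m_p)_\bullet$ is itself divisible by $p$ on $A_i$ for $i \geq 1$: the $\Sigma_p$-equivariant structure on the tensor power $m_p = (-)^{\otimes p}$ forces its action on positive-degree homotopy groups to factor through a norm/trace whose order involves $|\Sigma_p|$, producing a factor of $p$ (visibly, on $K_1$, $m_p(g) = g^{\otimes p}$ corresponds to multiplication by $p$). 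Granted this, the displayed identity becomes $((-)^{(1)})_\bullet = p \cdot H_\bullet$ for some $H_\bullet \in \mathrm{End}(A_i)$. Since the left-hand side is an automorphism, $p$ is surjective on $A_i$ (given $a$, let $b = ((-)^{(1)})_\bullet^{-1}(a)$; then $a = p \cdot H_\bullet(b)$) and injective (if $pa = 0$, then $((-)^{(1)})_\bullet(a) = p \cdot H_\bullet(a) = H_\bullet(pa) = 0$, whence $a = 0$). Therefore $A_i$ is a $\mathbb{Z}[1/p]$-module.

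The main obstacles are (a) rigorously establishing cofibre-sequence additivity for $\phi \mapsto \phi_\bullet$, and (b) making the $p$-divisibility of $(m_p)_\bullet$ precise in the generality of an abstract additive invariant $F$ on $\catp$; the latter is the substitute in our approach for the Hiller--Kratzer use of nilpotence of the $\gamma$-filtration.
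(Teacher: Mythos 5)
Your overall strategy matches the paper's: act on $A_i := \pi_i F(\perf(R))$ by classes in $K_0$ of the category of reduced degree $\leq p$ endofunctors of $\perf(R)$, invoke \Cref{reducedfunctorlem} to relate the Frobenius twist to $m_p$ mod $p$, and use that the Frobenius twist is invertible on $\perf(R)$. Your account of the $K_0$-factorization (direct-sum additivity via the biproduct, cofibre additivity via the additivity of $F|_{\catex}$ applied to the category of short exact sequences) is essentially right, and this part is consistent with how the paper implicitly uses additivity.

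The genuine gap is step (b), the treatment of $(m_p)_\bullet$. You claim $(m_p)_\bullet$ is \emph{divisible by} $p$ on $A_i$ for $i \geq 1$, appealing vaguely to the $\Sigma_p$-equivariant structure on $(-)^{\otimes p}$ and a ``norm/trace'' whose order involves $|\Sigma_p|$. This does not hold up: there is no evident mechanism by which the $\Sigma_p$-action produces a factor of $p$ on homotopy groups of $F(\perf(R))$, and your illustrative computation on $K_1$ is misleading --- for a rank-$n$ module the effect of $g \mapsto g^{\otimes p}$ on determinants is $\det(g) \mapsto \det(g)^{p\, n^{p-1}}$, so the operation is not uniformly ``multiplication by $p$'' even there. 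The argument that actually works is both simpler and stronger: $(m_p)_\bullet$ is \emph{zero} on $\pi_i$ for $i\geq 1$. The functor $m_p$ factors as
\[
\perf(R) \xrightarrow{\Delta} \textstyle\prod_p \perf(R) \xrightarrow{\otimes} \perf(R),
\]
where the second map is the multilinear tensor product. Since $F$ preserves finite products, $F(\Delta)$ is the diagonal of $F(\perf(R))$, and $F(\otimes)$ is a pointed map $\prod_p F(\perf(R)) \to F(\perf(R))$. Because the tensor product sends any tuple with one entry equal to $0$ to $0$, the restriction of $F(\otimes)$ to each coordinate axis is the constant map. On $\pi_i$ with $i \geq 1$ this is a homomorphism out of a finite product of abelian groups (abelian even for $i=1$, since $F(\perf(R))$ is grouplike $\e{\infty}$), and a homomorphism out of a finite product of abelian groups that vanishes on each factor is zero. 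Hence $(m_p)_\bullet = 0$, and then \Cref{reducedfunctorlem} gives $((-)^{(1)})_\bullet = p\cdot G_\bullet$ directly. The rest of your argument (automorphism plus $p$-divisibility implies $\Z[1/p]$-module) then goes through as you wrote it. So: keep your scaffolding, but replace the $\Sigma_p$/norm heuristic for $(m_p)_\bullet$ with this factorization-through-the-diagonal argument, which is where the multilinearity of the tensor product is actually used.
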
 
\begin{proof} 
It suffices to show that 
the Frobenius twist $(-)^{(1)}$ induces a map (for $i \geq 1$)
$\pi_i F(\perf(R)) \to \pi_i F(\perf(R))$ which is divisible by $p$. 
Note that an abelian group with an automorphism which is (as an endomorphism)
divisible by $p$ is necessarily a $\mathbb{Z}[1/p]$-module. 

The $p$th tensor power $m_p$ induces a map $F(\perf(R)) \to
F(\perf(R))$; this map factors as the composite of the 
diagonal map $F(\perf(R)) \to \prod_p F(\perf(R)) \xrightarrow{m} F(\perf(R))$,
where the second map comes from the multiplication map 
$\prod_p \perf(R) \to \perf(R)$. Since the multiplication map restricts to the
zero map when any factor is fixed at zero, it follows easily that 
$\prod_p F(\perf(R)) \xrightarrow{m} F(\perf(R))$ induces the zero map on
$\pi_i, i \geq 1$. 
Consequently, this factorization shows that $m_p$ 
induces the zero map on $\pi_i, i \geq 1$. 
But by 
\Cref{reducedfunctorlem}, $m_p$ is congruent modulo $p$ to the Frobenius twist
$(-)^{(1)}$, whence the claim. 
\end{proof}

\begin{corollary} 
If $R$ is a perfect $\mathbb{F}_p$-algebra, then the endomorphism $K$-groups
$K^{\mathrm{End}}_i(R)$ (i.e., the $K$-groups of the exact category of finitely
generated projective $R$-modules with an endomorphisms)
are $\mathbb{Z}[1/p]$-modules for $i > 1$. 
\end{corollary}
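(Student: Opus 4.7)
The strategy is to apply \Cref{Zpinvertible} to a suitable additive invariant $F \colon \catp \to \mathcal{S}$ whose value on $\perf(R)$ recovers the endomorphism $K$-theory space $K^{\mathrm{End}}(R)$.

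First I would construct $F$ as follows. For a small stable $\infty$-category $\mathcal{C}$, let $B\mathbb{N}$ denote the one-object category whose endomorphism monoid is the free monoid on one generator, so that an object of $\fun(B\mathbb{N}, \mathcal{C})$ is precisely a pair $(X, f \colon X \to X)$ of an object with an endomorphism. Set
\[
F(\mathcal{C}) := \widetilde{K}\bigl(\fun(B\mathbb{N}, \mathcal{C})\bigr),
\]
where $\widetilde{K}$ is the polynomial extension of $K$-theory supplied by \Cref{mainthm}. I would then verify that this defines a functor $\catp \to \mathcal{S}$: the assignment $\mathcal{C} \mapsto \fun(B\mathbb{N}, \mathcal{C})$ preserves stability, idempotent-completeness, and the relevant smallness conditions, and if $f \colon \mathcal{C} \to \mathcal{D}$ is $n$-excisive, then $f_* \colon \fun(B\mathbb{N}, \mathcal{C}) \to \fun(B\mathbb{N}, \mathcal{D})$ is again $n$-excisive, since strongly coCartesian and coCartesian cubes in a stable-target functor category are detected pointwise on the one-object category $B\mathbb{N}$.

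Second, I would check that $F$ is an additive invariant. The functor $\fun(B\mathbb{N}, -)$ preserves finite products and carries semi-orthogonal decompositions to semi-orthogonal decompositions (both computed pointwise), so these properties, as well as grouplikeness, descend from $\widetilde{K}$ to $F$.

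Third, I would identify $F(\perf(R)) \simeq K^{\mathrm{End}}(R)$ as spaces. The stable $\infty$-category $\fun(B\mathbb{N}, \perf(R))$ carries a bounded $t$-structure whose heart is the ordinary abelian category of pairs $(P, f)$ with $P$ finitely generated projective over $R$ and $f$ an $R$-linear endomorphism of $P$. A version of the Gillet--Waldhausen theorem then identifies the $K$-theory of this stable $\infty$-category (equivalently, of bounded chain complexes in the heart) with the Quillen $K$-theory of the exact category itself, which by definition is $K^{\mathrm{End}}(R)$.

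Combining these steps and invoking \Cref{Zpinvertible} for $F$ yields that $\pi_i F(\perf(R)) = K^{\mathrm{End}}_i(R)$ is a $\mathbb{Z}[1/p]$-module for all $i \geq 1$, which subsumes the stated range $i > 1$. The principal technical hurdle is the Gillet--Waldhausen identification in the third step: setting up the correct bounded $t$-structure on $\fun(B\mathbb{N}, \perf(R))$ and confirming that passing from the heart to bounded complexes does not change $K$-theory. The other steps --- that $\fun(B\mathbb{N}, -)$ preserves polynomial functoriality and that $F$ inherits additivity from $K$ --- are formal consequences of pointwise computation of (co)limits in functor categories.
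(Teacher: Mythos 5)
Your proof is correct and takes essentially the same route as the paper's, which (very tersely) invokes the identification $K^{\mathrm{End}}(R) \simeq K\bigl(\fun(\Delta^1/\partial\Delta^1, \perf(R))\bigr)$ from \cite{BGT16} together with \Cref{Zpinvertible} and the polynomial functoriality of $K$-theory; your indexing category $B\mathbb{N}$ is Joyal-equivalent to $\Delta^1/\partial\Delta^1$, so these agree, and your verification that $\fun(B\mathbb{N},-)$ carries polynomial functors to polynomial functors and split exact sequences to split exact sequences fills in what the paper leaves implicit. Your observation that the argument actually yields the conclusion for all $i \geq 1$ (not merely $i > 1$ as stated) is also correct, since \Cref{Zpinvertible} covers that range.
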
 
\begin{proof} 
This follows using the identification $K^{\mathrm{End}}(R) = K(
\mathrm{Fun}(\Delta^1/\partial \Delta^1, \mathrm{Perf}(R))$ of
\cite{BGT16}, \Cref{Zpinvertible}, and the polynomial functoriality of
$K$-theory. 
\end{proof}

In the rest of this section, we carry out the application to
$\THH(\mathbb{F}_p)$. In particular, we reprove the following result,
cf.~\cite[Th.~5.5]{HeM97}. 

\newcommand{\TR}{\mathrm{TR}}
\begin{theorem} 
\label{TRFp}
We have $\TR(\mathbb{F}_p; p) = H\mathbb{Z}_p$. 
\end{theorem}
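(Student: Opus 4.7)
The plan is to follow the strategy sketched in the introduction. By the main theorem of \cite{AN21}, establishing $\TR(\mathbb{F}_p; p) \simeq H\mathbb{Z}_p$ is equivalent to B\"okstedt's calculation $\pi_*\THH(\mathbb{F}_p) \simeq \mathbb{F}_p[\sigma]$ with $|\sigma| = 2$, so it suffices to show that $\TR(\mathbb{F}_p; p)$ is discrete with $\pi_0 = \mathbb{Z}_p$.

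The key input is the Lindenstrauss--McCarthy theorem \cite{LM12}, which describes $\TR(R;p)$ in terms of cyclic $K$-theory of $\perf(R)$. Concretely, $\TR(R;p)$ is assembled from invariants of the form $K(\fun_{\mathrm{ex}}(\mathcal{C}, \perf(R)))$ for suitable small stable $\infty$-categories $\mathcal{C}$ (modeling modules with an endomorphism together with their iterated Frobenius lifts). For fixed $\mathcal{C}$, the construction $\mathcal{D} \mapsto K(\fun_{\mathrm{ex}}(\mathcal{C}, \mathcal{D}))$ preserves products and semiorthogonal decompositions, and by \Cref{mainthm} it extends canonically to an additive invariant on $\catp$.

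I would then apply \Cref{Zpinvertible} to each such additive invariant evaluated on $\perf(\mathbb{F}_p)$: its homotopy groups in degrees $i \geq 1$ are $\mathbb{Z}[1/p]$-modules and therefore vanish after $p$-completion. Assembling these vanishing statements through the Frobenius/restriction tower defining $\TR$ shows that $\TR(\mathbb{F}_p;p)^\wedge_p$ is concentrated in degree zero. Since $\THH(\mathbb{F}_p)$ is $p$-complete, so are its iterated fixed points $\TR^n(\mathbb{F}_p;p)$ and their inverse limit, so the $p$-completion is automatic and $\TR(\mathbb{F}_p;p)$ itself is discrete. The identification $\pi_0 \TR(\mathbb{F}_p;p) \simeq \mathbb{Z}_p$ is then a routine $K_0$ computation via the cyclic $K$-theory description, reproducing the Witt vectors of $\mathbb{F}_p$.

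The main obstacle will be making the identification of \cite{LM12} sufficiently precise and functorial in $\perf(R)$ in a polynomial (rather than merely exact) sense, so that \Cref{Zpinvertible} applies directly to the assembled invariant rather than merely to its levelwise pieces. In particular one must check that the inverse limit over the tower $\{\TR^n\}$ does not reintroduce higher homotopy, and that the Frobenius and restriction maps respect the additive-invariant structure; once this is arranged, the rest of the argument is essentially formal from \Cref{Zpinvertible}.
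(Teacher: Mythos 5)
The proposal correctly identifies the two external inputs ([AN21] to reduce B\"okstedt to discreteness of $\TR$, and [LM12] to connect $\TR$ to cyclic $K$-theory) and the spirit of the argument, but it skips over the step where the actual content lies, and the specific mechanism it proposes does not work as stated. Proposition~\ref{Zpinvertible} applies to an additive invariant $F\colon \catp \to \mathcal{S}$ evaluated on $\perf(R)$; but the Lindenstrauss--McCarthy description does not present $\TR(\mathbb{F}_p;p)$ as built from pieces of the form $K(\fun_{\mathrm{ex}}(\mathcal{C},\perf(R)))$ that are \emph{polynomially} functorial in $\perf(R)$. Indeed, for a polynomial functor $\mathcal{D}\to\mathcal{D}'$ the induced map $\fun_{\mathrm{ex}}(\mathcal{C},\mathcal{D})\to\fun_{\mathrm{ex}}(\mathcal{C},\mathcal{D}')$ is not even well defined (post-composing an exact functor with a polynomial one destroys exactness), so the proposed extension to $\catp$ via Theorem~\ref{mainthm} fails at the outset. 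The paper's Remark~\ref{cofiberV} explicitly flags exactly this hope --- that $\TR$ might be made polynomially functorial, so that Proposition~\ref{Zpinvertible} would apply directly --- and states that it is not verified here; the actual proof takes a different route.

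What the proof really does is exploit polynomial functoriality in the \emph{auxiliary} module variable $V$, not in $\perf(R)$. By [LM12], $\TR(\mathbb{F}_p)\simeq \varprojlim_m (P_m K^{\mathrm{cyc}})(\mathbb{F}_p)$, a Goodwillie tower of the functor $V\mapsto K^{\mathrm{cyc}}(V)$ on animated $\mathbb{F}_p$-vector spaces. Because a degree~$\leq n$ polynomial map $f\colon\vectw\to\vectw$ sends $\mathcal{C}_V$ not to $\mathcal{C}_V$ but to $\mathcal{C}_{S_n V}$, one must introduce the coalgebra constructions $S_n V$ and show that the induced maps $f_*\colon (P_{mn}K^{\mathrm{cyc}})(V)\to (P_m K^{\mathrm{cyc}})(S_n V)$ exist. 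Combined with Lemma~\ref{reducedfunctorlem} (Frobenius twist is congruent to the $p$th tensor power modulo $p$ in $K_0$ of degree-$p$ polynomial functors) and the fact that the $p$th tensor power factors through a product and so vanishes on $\pi_{\geq 1}$, this yields Corollary~\ref{maindivbyp}: the truncation maps $(P_{mp}K^{\mathrm{cyc}})(V)\to (P_m K^{\mathrm{cyc}})(V)$ in the Goodwillie tower are divisible by $p$ on $\pi_i$ for $i\geq 1$. Only then does the inverse limit argument give unique $p$-divisibility, hence vanishing, of $\pi_i\TR(\mathbb{F}_p)$ for $i\geq 1$. Your proposal names the gap honestly at the end, but the gap is precisely the proof: without the $S_n V$ construction and the control of the transition maps in the Goodwillie tower, the passage from Proposition~\ref{Zpinvertible} to $\TR$ does not go through.
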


\Cref{TRFp} is proved in \cite{HeM97} as a consequence of the description of
$\THH(\mathbb{F}_p)$. 
However, thanks to the theory of topological Cartier modules of \cite{AN21}, 
\Cref{TRFp} is actually \emph{equivalent} to B\"okstedt's computation of $\THH(\mathbb{F}_p)$,
together with the cyclotomic Frobenius (and its explicit presentation as a
cyclotomic spectrum as in \cite[Sec.~IV.4]{NS18}). 
We describe this deduction below in \Cref{Bthm}.

We will directly deduce \Cref{TRFp} using elementary arguments with polynomial functors,
and, crucially, the result of  \cite{LM12} relating $\TR$ to a suitable
completion of the
$K$-theory of endomorphisms. By contrast, our argument will not rely on any
computational facts in stable homotopy theory such as the structure of the dual
Steenrod algebra, used in more classical proofs as in \cite{HN19}. In fact, these results can conversely be deduced from our result, i.e. the computation of the dual Steenrod algebra together with its Dyer-Lashof operations. 
Purely algebraic proofs of B\"okstedt's theorem have been given 
for the additive structure 
\cite{FLS94}; see
\cite{Kaledin, FonarevKaledin} for  recent purely algebraic proofs including the
multiplicative structure.

\begin{remark} 
\label{cofiberV}
As in \cite{AN21}, $\THH(\mathbb{F}_p)$ is the cofiber of the Verschiebung map
$V: \TR(\mathbb{F}_p; p)_{hC_p} \to \TR(\mathbb{F}_p; p)$. Since $\pi_0
\THH(\mathbb{F}_p) =\mathbb{F}_p$, it  is sufficient (in order to prove
\Cref{TRFp}) to verify that $\pi_i \TR(\mathbb{F}_p; p) =0$ for $i > 0$. 
We expect that this can be deduced from \Cref{Zpinvertible}: that is, we expect
that $\TR$ can be made functorial in polynomial functors. 
While we do not verify this statement here, it provided motivation for the argument. 
\end{remark} 

\begin{definition}[Endomorphism and cyclic $K$-theory] 
Given a finite-dimensional $\mathbb{F}_p$-vector space $V$, we let 
$\mathfrak{C}_V$ denote the stable $\infty$-category of module spectra over the
tensor algebra $T(V^{\vee})$ of $V^{\vee}/\mathbb{F}_p$. Equivalently, $\mathfrak{C}_V$ is the
derived $\infty$-category of the \emph{abelian} category 
$\mathfrak{C}_V^{\heartsuit}$
of $\mathbb{F}_p$-vector
spaces $X$ equipped with a map $X \to X \otimes V$.\footnote{We treat cyclic
$K$-theory using stable $\infty$-categories rather than abelian categories since our main result on polynomial
functors is expressed in terms of the former.} 
The presentable, stable $\infty$-category $\mathfrak{C}_V$ has a full
subcategory $\mathcal{C}_V$, consisting of those objects whose
underlying $\mathbb{F}_p$-module spectrum is perfect.

We let 
$K^{\mathrm{End}}( V)$
be the $K$-theory of  $\mathcal{C}_V \in \catst$. By the theorem of the heart
\cite{Bar15}, 
$K^{\mathrm{End}}(V)$
is
also the $K$-theory of the abelian category $\mathcal{C}_V^{\heartsuit}$ of finite-dimensional
$\mathbb{F}_p$-vector spaces $X$ equipped with a map $X \to X \otimes V$. 
There is a natural map
$K^{\mathrm{End}}( V) \to K(\mathbb{F}_p) = K^{\mathrm{End}}(0)$ and we write
$K^{\mathrm{cyc}}(V)$ (called the \emph{cyclic $K$-theory}) for the homotopy fiber. 
\end{definition} 

\begin{construction}[The constructions $R_n, S_n$] 
Given $V \in \vectw$, we let $R_n V$ denote the universal $\mathbb{F}_p$-vector
space with a 
degree $\leq n$ map $V \to R_n V$ (in the sense of 
\Cref{degreenmapgroup}\footnote{In fact, $R_n V$ is the quotient of the group
algebra $\mathbb{F}_p[V]$ by the $(n+1)$st power of the augmentation ideal,
cf.~the proof of \Cref{thm_passi}.})
and let $S_n V = (R_n V^{\vee})^{\vee}$. 
Note that we have a natural inclusion map 
$V \to S_n V$ for each $V \in \vectw$, which is dual to the natural surjection
$R_n V^{\vee} \to V^{\vee}$. Moreover, the constructions $V \mapsto R_n V, S_n V$ 
themselves define degree
$\leq n$ functors on $\vectw$. 
\end{construction}

\begin{construction}[Polynomial functors on cyclic $K$-theory] 
Let $f: \vectw \to \vectw$ be a degree $\leq n$ functor such that $f(0) = 0$. 
We can extend $f$ to a functor on all $\mathbb{F}_p$-vector spaces which
preserves filtered colimits. 

It follows that we obtain a natural polynomial functor
of additive categories
\begin{equation}  f: \mathfrak{C}_V^{\heartsuit} \to
\mathfrak{C}^{\heartsuit}_{S_n V} . \label{additivef} \end{equation}
Explicitly, given the pair
\( (X, X \to X \otimes V) ,  \) 
we carry this 
to the pair $( f(X), f(X) \to f(X) \otimes S_n V)$ obtained by applying $f$ to
the map $V^{\vee} \to \mathrm{Hom}_{\mathbb{F}_p}(X, X)$ 
(to obtain a degree $\leq n$ map $V^{\vee} \to \mathrm{Hom}_{\mathbb{F}_p}(f(X),
f(X))$)
and adjointing over. 
Taking left derived functors from the projective objects, we obtain a 
polynomial functor of stable $\infty$-categories
\( \mathfrak{C}_V \to \mathfrak{C}_{S_n V}  \)
extending \eqref{additivef}, and restricting to objects with underlying perfect
$\mathbb{F}_p$-module, we obtain a polynomial functor
of stable $\infty$-categories
\[ f: \mathcal{C}_V \to  \mathcal{C}_{S_n V}. \]

Consequently, we obtain a natural map on cyclic $K$-theory
\begin{equation} \label{keymap} f_* : K^{\mathrm{cyc}}(V) \to K^{\mathrm{cyc}}( S_n V).  \end{equation}
\end{construction} 

\begin{example}[Exact functors] 
\label{exactfunctors}
Suppose $f$ is actually an exact functor, considered as a functor of degree
$\leq n$ for some $n \geq 1$. 
In this case, the map \eqref{keymap} factors as 
$K^{\mathrm{cyc}}(V) \xrightarrow{f} K^{\mathrm{cyc}}(V) \to K^{\mathrm{cyc}}(S_n V)$,
where the first map arises from the exact functor $f: \mathcal{C}_V \to
\mathcal{C}_V$, and the second map arises by functoriality from the natural inclusion $V \subset
S_n V$. 
\end{example} 

\begin{example}[The tensor product] 
Suppose $f: \vectw \to \vectw$ is the functor given by the $n$th tensor power. 
Then we actually have a factorization
of $f_*$
\begin{equation} \label{factorizationn} K^{\mathrm{cyc}}(V) \xrightarrow{\Delta} \prod_n K^{\mathrm{cyc}}(V) \to K^{\mathrm{cyc}}(S_n
V) \end{equation}
since the tensor product factors as 
\[ \mathcal{C}_V \xrightarrow{\Delta} \prod_n \mathcal{C}_V \to
\mathcal{C}_{S_n V}.   \]
Indeed, this factorization holds on $\mathfrak{C}_V$. 
At the level of $\mathfrak{C}_V^{\heartsuit}$ (from which it follows in
general), we argue as follows: 
given an $n$-tuple of $\mathbb{F}_p$-vector spaces $\left\{X_i \right\}_{1 \leq
i \leq n}$ with maps $\phi_i: X_i \to X_i \otimes V$, we consider the adjoint
maps $V^{\vee} \to \mathrm{Hom}_{\mathbb{F}_p}(X_i, X_i)$ and use the tensor product
functoriality on $\mathbb{F}_p$-vector spaces. 
\end{example}

We now define $S_n V, K^{\mathrm{cyc}}(V)$ for an arbitrary animated
$\mathbb{F}_p$-vector space\footnote{Or simplicial $\mathbb{F}_p$-vector space,
cf.~\cite{CS} for a discussion of the terminology.} 
$V$ by animation (i.e., left Kan extension from finite-dimensional 
$\mathbb{F}_p$-vector spaces) of the above constructions. It follows that 
we still have a map 
\eqref{keymap} for an animated $\mathbb{F}_p$-vector space for each degree $\leq
n$ functor $f: \vectw \to \vectw$; moreover, this carries short exact sequences
of functors to sums of natural maps. 

Given a functor $G$ from animated $\mathbb{F}_p$-modules to spaces, 
let $P_m G$ denote the $m$th Goodwillie approximation, cf.~\cite[Sec.~6.1]{HA}
for a treatment in this language. 
The functor $V \mapsto S_n V$ is degree $\leq n$ as an endomorphism of animated
$\mathbb{F}_p$-vector spaces. 
Therefore, it follows that from the functor $f$, we obtain a map for each $m
\geq 1$ by taking Goodwillie approximations in \eqref{keymap},
\begin{equation} \label{Pmn} f_*: ( P_{mn}K^{\mathrm{cyc}}) (V)  \to (P_m K^{\mathrm{cyc}})( S_n
V). \end{equation}

\begin{proposition} 
Let $V$ be any animated $\mathbb{F}_p$-vector space. 
Then the map 
\eqref{Pmn} for $n = p$ and $f= \mathrm{id}$ is divisible by $p$ on homotopy groups $\pi_i, i \geq
1$. 
\end{proposition}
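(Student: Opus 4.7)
The plan is to exploit \Cref{reducedfunctorlem} together with the fact that Frobenius acts trivially on $\mathbb{F}_p$-vector spaces. Applied to $R = \mathbb{F}_p$, the lemma gives $[(-)^{(1)}] - [m_p] \in p\cdot K_0$ in the exact category of degree $\leq p$ reduced endofunctors of $\vectw$. Over $\mathbb{F}_p$ the Frobenius twist is canonically isomorphic to the identity, so I can rewrite this as $[\mathrm{id}] - [m_p] = p\cdot \xi$ for some class $\xi$ in that $K_0$. The goal is to transport this relation to an identity of induced maps $(\mathrm{id})_* - (m_p)_* = p \cdot \xi_*$ on $\pi_i$ of \eqref{Pmn} for $i \geq 1$, and then to kill $(m_p)_*$ separately.

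For the first step, I would argue that the assignment $f \mapsto f_*$ (where $f_*$ is the map constructed in \eqref{Pmn}) factors through $K_0$ of the exact category of degree $\leq p$ reduced functors, when viewed as taking values in $\mathrm{Hom}\bigl(\pi_i(P_{mp}K^{\mathrm{cyc}})(V),\, \pi_i(P_m K^{\mathrm{cyc}})(S_p V)\bigr)$ for $i \geq 1$. A short exact sequence $0 \to f_1 \to f_2 \to f_3 \to 0$ of reduced degree $\leq p$ functors $\vectw \to \vectw$ produces, pointwise, short exact sequences in $\mathfrak{C}^{\heartsuit}_{S_p V}$, and upon left deriving and restricting to perfect underlying modules a cofiber sequence of polynomial functors $\mathcal{C}_V \to \mathcal{C}_{S_p V}$. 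Invoking the polynomial functoriality of $K$-theory (\Cref{mainthm}) and the grouplike structure of $K^{\mathrm{cyc}}$, such a cofiber sequence of natural transformations of polynomial functors induces an additive relation $(f_2)_* = (f_1)_* + (f_3)_*$ on each $\pi_i$ for $i \geq 1$. The $K_0$ identity above then yields $(\mathrm{id})_* - (m_p)_* = p\cdot \xi_*$.

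For the second step, I would show that $(m_p)_* = 0$ on $\pi_i$ for $i \geq 1$, following the pattern of \Cref{Zpinvertible}. The factorization \eqref{factorizationn} expresses $(m_p)_*$ as
\[
K^{\mathrm{cyc}}(V) \xrightarrow{\Delta} \prod_p K^{\mathrm{cyc}}(V) \to K^{\mathrm{cyc}}(S_p V),
\]
and since $P_k$ commutes with finite products, this descends after Goodwillie approximation to a factorization of $(m_p)_*$ through $\prod_p (P_{mp} K^{\mathrm{cyc}})(V)$. The multiplication map $\prod_p \mathcal{C}_V \to \mathcal{C}_{S_p V}$ is the $p$-fold tensor product, which vanishes whenever any factor is set to zero; hence the induced map on $K^{\mathrm{cyc}}$ is zero on $\pi_i$ for $i \geq 1$ by precisely the argument in \Cref{Zpinvertible}. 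Combining the two steps gives $(\mathrm{id})_* = p\cdot \xi_*$, which is visibly divisible by $p$.

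The main obstacle is the first step: turning the $K_0$-level relation on functors into an honest additive relation on the induced maps $(f_i)_*$. It is essential here that we restrict to $\pi_i$ with $i \geq 1$, where the targets are abelian groups, and that we use the polynomial additive structure furnished by \Cref{mainthm} rather than just the pointwise statement. Making this precise amounts to checking that a cofiber sequence of polynomial functors between stable $\infty$-categories induces an additive (not merely triangulated) relation on homotopy groups of $K^{\mathrm{cyc}}$, which requires unwinding the construction of $\widetilde{K}$ as the polynomial additivization and exploiting the grouplike structure there.
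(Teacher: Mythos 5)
Your proposal is correct and follows essentially the same route as the paper: both factor the assignment $f \mapsto f_*$ through $K_0$ of the degree $\leq p$ functor category, invoke \Cref{reducedfunctorlem} together with the identification of Frobenius with the identity over $\mathbb{F}_p$ to reduce to $m_p$, and then use the factorization through the $p$-fold product to show $(m_p)_*$ vanishes on $\pi_i$ for $i \geq 1$. The one point you flag as the main obstacle (that the $K_0$-level relation transports to an additive relation on induced maps, which uses additivity of the polynomial $K$-theory applied to $\mathcal{D}^{\Delta^1}$) is exactly the step the paper also leaves implicit, asserting the existence of the map of abelian groups.
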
 
\begin{proof} 
Via the above construction 
\eqref{keymap},
we have a map of abelian groups
\[ K_0 ( \mathrm{Fun}_{\leq p}( \vectw, \vectw)) \to 
\pi_0 \mathrm{Map}_{*}((P_{mp} K^{\mathrm{cyc}})(V), (P_m K^{\mathrm{cyc}})(S_p
V)).
\]
Our claim is that the identity functor induces a map 
\eqref{Pmn} which is divisible by $p$ on the higher homotopy groups. 
By the above observation and \Cref{reducedfunctorlem}, it suffices to show that the $p$th tensor power
functor $f_p$ has this property. 
But in fact, we have a factorization from taking approximations in
\eqref{factorizationn},
\[ (P_{mp}K^{\mathrm{cyc}})(V) \xrightarrow{\Delta} \prod_p
(P_{mp}K^{\mathrm{cyc}})(V) \to (P_m K^{\mathrm{cyc}})(S_p V), \]
and one checks (by comparing with the non-approximated case) that the 
second map restricts to zero when a single factor is zero. 
Thus, on homotopy groups, this map is zero on $\pi_i, i \geq 1$, whence the
claim. 
\end{proof} 

\begin{corollary} 
\label{maindivbyp}
For any $\mathbb{F}_p$-vector space $V$ and any $m \geq 1$, the
truncation map 
\begin{equation} \label{Truncmap} (P_{mp} K^{\mathrm{cyc}})(V) \to (P_m
K^{\mathrm{cyc}})(V)  \end{equation}
is divisible by $p$ on $\pi_i, i \geq 1$. 
\end{corollary}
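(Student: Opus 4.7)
The plan is to realize the truncation map as a retract of the map studied in the preceding proposition, and then use the fact that a retract of a map divisible by $p$ is again divisible by $p$.

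First, I would verify that the natural inclusion $V \hookrightarrow S_p V$ admits a natural retraction $S_p V \to V$. Dualizing, a retraction corresponds to a natural map $V^{\vee} \to R_p V^{\vee}$; but this is supplied tautologically by the universal degree $\leq p$ map out of $V^\vee$ used to define $R_p V^{\vee}$. The identity $V \to S_p V \to V$ then corresponds by naturality to the composite $V^{\vee} \to R_p V^{\vee} \to V^{\vee}$, which is the identity since the surjection $R_p V^\vee \to V^\vee$ corresponds under the universal property of $R_p$ to $\mathrm{id}_{V^\vee}$. Applying the functor $P_m K^{\mathrm{cyc}}$ then shows that the map $(P_m K^{\mathrm{cyc}})(V) \to (P_m K^{\mathrm{cyc}})(S_p V)$ has a natural splitting, and in particular is a split monomorphism on every homotopy group.

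Second, I would apply the preceding proposition with $f = \mathrm{id}$, regarded as a functor of degree $\leq p$ (since $p \geq 1$). The resulting map
\[ (P_{mp} K^{\mathrm{cyc}})(V) \to (P_m K^{\mathrm{cyc}})(S_p V) \]
is divisible by $p$ on $\pi_i$ for $i \geq 1$. By \Cref{exactfunctors}, since the identity is exact, this map \eqref{Pmn} factors as the truncation followed by the functoriality map induced by $V \subset S_p V$:
\[ (P_{mp} K^{\mathrm{cyc}})(V) \xrightarrow{\mathrm{trunc}} (P_m K^{\mathrm{cyc}})(V) \to (P_m K^{\mathrm{cyc}})(S_p V). \]

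Finally, I would combine these: if $s$ denotes the splitting of $(P_m K^{\mathrm{cyc}})(V) \to (P_m K^{\mathrm{cyc}})(S_p V)$ on $\pi_i$, then postcomposing the factorization above with $s$ recovers the truncation map. Since the composite before applying $s$ is $p$-divisible on $\pi_i$ for $i \geq 1$, so is the truncation map, which is exactly the claim. No step here looks genuinely hard: the main subtlety is to make sure the retraction $S_p V \to V$ really is natural in $V$, which is a direct check at the level of the corepresenting objects $R_p(-)$.
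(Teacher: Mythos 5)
Your overall strategy matches the paper's: apply the preceding proposition with $f=\mathrm{id}$ of degree $\leq p$, factor through the truncation map via \Cref{exactfunctors}, and then peel off the map induced by $V \to S_p V$ using a splitting. The paper does exactly this.

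However, your justification for the retraction $S_p V \to V$ is wrong. You claim a natural linear section $V^\vee \to R_p V^\vee$ is ``supplied tautologically by the universal degree $\leq p$ map out of $V^\vee$.'' That universal map $V^\vee \to R_p V^\vee$ (e.g.\ $w \mapsto [w]$ in $\mathbb{F}_p[V^\vee]/I^{p+1}$) is a polynomial map of degree $\leq p$, \emph{not} a linear map, so it does not give a linear retraction of the surjection $R_p V^\vee \twoheadrightarrow V^\vee$, and a fortiori not a natural one. Indeed $V \mapsto S_p V$ is a genuinely nonlinear (degree $\leq p$) functor and one should not expect a natural linear transformation $S_p \to \mathrm{id}$.

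Fortunately, none of that is needed. The corollary is a statement for each fixed $V$, so a non-natural splitting suffices: $V \to S_p V$ is injective (being dual to the surjection $R_p V^\vee \to V^\vee$), and every injection of $\mathbb{F}_p$-vector spaces splits. Any choice of linear retraction $S_p V \to V$ induces a map $(P_m K^{\mathrm{cyc}})(S_p V) \to (P_m K^{\mathrm{cyc}})(V)$ which splits the map induced by $V \hookrightarrow S_p V$, and the rest of your argument goes through unchanged. This is precisely what the paper does; replace your first paragraph by this observation and the proof is correct.
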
 
\begin{proof} 
If we consider the identity as a functor of degree $\leq p$, we 
we obtain a map
\[ \mathrm{id}_*:  
(P_{mp} K^{\mathrm{cyc}})(V) \to (P_m K^{\mathrm{cyc}})( S_pV) \]
which we have seen is divisible by $p$ on $\pi_i, i \geq 1$. 
Using \Cref{exactfunctors}, we see that this map 
is the composite of the truncation map 
\eqref{Truncmap} and the map induced by functoriality from $V \to S_p V$. 
Now for any $\mathbb{F}_p$-vector space $V$, the map $V \to S_p V$ is injective and
hence admits a splitting. Thus, the result follows. 
\end{proof}

\begin{proof}[Proof of \Cref{TRFp}] 
By the results of \cite{LM12}, 
$\TR(\mathbb{F}_p)$ is given by $\varprojlim_m (P_{m}
K^{\mathrm{cyc}})(\mathbb{F}_p)$. 
However, 
by \Cref{maindivbyp}, we thus find that $\pi_i \TR( \mathbb{F}_p)$ is uniquely 
$p$-divisible for $i \geq 1$, and hence vanishes since 
it is derived $p$-complete.
The same holds for the $p$-typical $\TR$, which is for $\mathbb{Z}_{(p)}$-algebras a summand of the `big' $\TR$, whence the result (cf.~also
\Cref{cofiberV}). 
\end{proof}

By the results of \cite{AN21}, the entire structure of
$\THH(\mathbb{F}_p)$ as a cyclotomic spectrum is determined by the knowledge of
$\TR(\mathbb{F}_p; p)$; the Frobenius and Verschiebung operators are forced for
truncatedness reasons. In particular, we include the deduction of B\"okstedt's
theorem in its original form. 
In the following, we freely use the language of \cite{NS18, AN21}, in particular
the $\mathbb{T}$-equivariant Frobenius and  Verschiebung on $\TR(-; p)$ and the
$\mathbb{T}$-equivariant cyclotomic Frobenius on $\THH(\mathbb{F}_p)$. 

\begin{corollary}[B\"okstedt] 
\label{Bthm}
We have $\THH(\mathbb{F}_p)_* = \mathbb{F}_p[\sigma]$ for $|\sigma| = 2$. 
\end{corollary}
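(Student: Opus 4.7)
The plan is to deduce B\"okstedt's calculation directly from \Cref{TRFp} using the cofiber sequence recalled in \Cref{cofiberV}: from \cite{AN21},
\[ \TR(\mathbb{F}_p;p)_{hC_p} \xrightarrow{V} \TR(\mathbb{F}_p;p) \to \THH(\mathbb{F}_p). \]
Substituting $\TR(\mathbb{F}_p;p) \simeq H\mathbb{Z}_p$, we are reduced to computing the cofiber of the Verschiebung $V: (H\mathbb{Z}_p)_{hC_p} \to H\mathbb{Z}_p$. This is a purely formal calculation.

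The additive structure comes out at once. The source has homotopy groups $H_*(BC_p; \mathbb{Z}_p)$, which equal $\mathbb{Z}_p$ in degree $0$, $\mathbb{Z}/p$ in each positive odd degree, and $0$ in each positive even degree, while the target is concentrated in degree $0$. The map $V$ is the transfer along $EC_p \to \ast$, and therefore restricts to multiplication by $|C_p| = p$ on $\pi_0$. The long exact sequence of the cofiber then forces $\pi_0 \THH(\mathbb{F}_p) = \mathbb{F}_p$, $\pi_{2i-1}\THH(\mathbb{F}_p) = 0$, and $\pi_{2i}\THH(\mathbb{F}_p) = \mathbb{F}_p$ for $i \geq 1$, which is the additive part of the claim.

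To upgrade to the ring statement, I would argue as follows. The ring $\pi_* \THH(\mathbb{F}_p)$ is graded-commutative over $\mathbb{F}_p$, concentrated in even degrees, with one-dimensional graded pieces. Picking any generator $\sigma \in \pi_2$, the only remaining question is whether some positive power $\sigma^n$ vanishes. Here I would invoke the last paragraph before the corollary: by \cite{AN21}, the cyclotomic spectrum structure on $\THH(\mathbb{F}_p)$ (and in particular the cyclotomic Frobenius $\phi: \THH(\mathbb{F}_p) \to \THH(\mathbb{F}_p)^{tC_p}$, a map of $\mathbb{E}_\infty$-rings) is forced by $\TR(\mathbb{F}_p;p) \simeq H\mathbb{Z}_p$ for truncatedness reasons. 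Using the explicit description of $\phi$ coming from this forcing and the Tate spectral sequence for $H\mathbb{Z}_p^{tC_p}$, one verifies that $\phi(\sigma)$ is nonzero in $\pi_2 \THH(\mathbb{F}_p)^{tC_p}$ and that its powers remain nonzero; since $\phi$ is multiplicative, $\phi(\sigma^n) = \phi(\sigma)^n \neq 0$, so $\sigma^n \neq 0$.

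The main obstacle is the final multiplicative step: the additive part is automatic once \Cref{TRFp} is in hand, but ruling out a truncated structure $\mathbb{F}_p[\sigma]/\sigma^n$ requires genuinely using the multiplicative part of the cyclotomic structure --- specifically the cyclotomic Frobenius and its determination via \cite{AN21}. Everything else is a bookkeeping exercise with the long exact sequence of the defining cofiber sequence.
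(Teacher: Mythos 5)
Your additive computation from the cofiber sequence is correct, and gives the additive structure by a slightly more hands-on route than the paper. The gap is in the multiplicative step. You rightly note that the issue is to rule out $\mathbb{F}_p[\sigma]/(\sigma^n)$ and that the cyclotomic Frobenius is the natural tool, but the verification is left as a black box: to run the argument you must know what $\THH(\mathbb{F}_p)^{tC_p}$ is and why $\phi(\sigma)$ and its powers survive there, and neither of these is established by what you have so far. The paper supplies exactly the missing input: by \cite{AN21} there is a pullback square of $\mathbb{E}_\infty$-rings
\[
\xymatrix{
\TR(\mathbb{F}_p; p) \ar[d]\ar[r] & \THH(\mathbb{F}_p) \ar[d] \\
\TR(\mathbb{F}_p; p)^{hC_p} \ar[r] & \TR(\mathbb{F}_p; p)^{tC_p}.
}
\]
Since $\TR(\mathbb{F}_p;p) \simeq H\mathbb{Z}_p$, the left vertical map has $(-1)$-truncated cofiber, so the right vertical map is a ring map inducing an isomorphism on $\pi_{\geq 0}$; as $\pi_*(H\mathbb{Z}_p)^{tC_p} = \mathbb{F}_p[\sigma^{\pm 1}]$, this yields $\pi_*\THH(\mathbb{F}_p) = \mathbb{F}_p[\sigma]$ at once, additively and multiplicatively, making the separate long exact sequence computation unnecessary. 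Your sketch tacitly identifies $\THH(\mathbb{F}_p)^{tC_p}$ with $(H\mathbb{Z}_p)^{tC_p}$; that identification (a consequence of the Tate orbit lemma, or of the square itself) together with the vanishing estimate on the cofiber is precisely what makes the Frobenius argument go, and it is the load-bearing step your outline omits.
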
 
\begin{proof} 
By \cite{AN21} we have a pullback of ring spectra
\[
\xymatrix{
\TR(\mathbb{F}_p; p) \ar[d]\ar[r] & \THH(\mathbb{F}_p) \ar[d] \\
\TR(\mathbb{F}_p; p)^{hC_p} \ar[r] & \TR(\mathbb{F}_p; p)^{tC_p}  \ .
}
\]
Thus since $\TR(\mathbb{F}_p; p) = H \Z_p$ we conclude that the left vertical map has
$(-1)$-truncated cofibre, and hence the right vertical map is an isomorphism on
non-negative homotopy groups, which shows the claim. In fact, it shows that $\THH(\mathbb{F}_p) \simeq \tau_{\geq 0} (\mathbb{Z}_p^{tC_p})$ as (equivariant) $\mathbb{E}_\infty$-rings, which was deduced in \cite{NS18} from B\"okstedt's theorem.
\end{proof} 
Kaledin \cite{Kaledin} and Fonarev--Kaledin \cite{FonarevKaledin} also give
purely algebraic proofs of B\"okstedt periodicity. 
We expect our arguments should be related to those of \cite{Kaledin,
FonarevKaledin};
however, we do not use the language of trace theories, and the ingredients used
here seem to be slightly different. 

\bibliographystyle{amsalpha}
\bibliography{polynomial}

\end{document}